\numberwithin{equation}{section}
\newtheorem{thm}{Theorem}[section]
\newtheorem{prop}[thm]{Proposition}
\newtheorem{lem}[thm]{Lemma}
\newtheorem{cor}[thm]{Corollary}
\theoremstyle{definition}
\theoremstyle{remark}
\newtheorem{rem}[thm]{Remark}
\newtheorem{ex}[thm]{Example}
\newcommand{\NN}{\mathbb{N}}
\newcommand{\RR}{\mathbb{R}}
\newcommand{\TT}{\mathbb{T}}
\newcommand{\QQ}{\mathbb{Q}}
\def\F{\mathcal{F}}
\def\Oo{\mathcal{O}}
\newcommand\mc{\mathrm{mc}}
\def\Cc{\mathcal{C}}
\def\T{\mathcal{T}}
\newcommand{\QE}{\operatorname{QE}}
\newcommand{\Aut}{\operatorname{Aut}}
\newcommand{\lsp}{\operatorname{span}}
\newcommand{\clsp}{\overline{\lsp}}
\def\ebetareg{E^0_{\beta\text{{-reg}}}}
\def\ebetacri{E^0_{\beta\text{{-crit}}}}
\title[KMS states on graph algebras]{\boldmath{KMS states on the $C^*$-algebras of reducible graphs}}
\author[A. an Huef]{Astrid an Huef}
\author[M. Laca]{Marcelo Laca}
\author[I. Raeburn]{Iain Raeburn}
\author[A. Sims]{Aidan Sims}
\address{Astrid an Huef and Iain Raeburn\\ 
Department of Mathematics and Statistics\\University of Otago\\PO Box 56\\Dunedin 9054\\New Zealand}
\email{astrid@maths.otago.ac.nz, iraeburn@maths.otago.ac.nz}
\address{Marcelo Laca\\ Department of Mathematics and Statistics\\
University of Victoria\\
Victoria, BC V8W 3P4\\
Canada}
\email{laca@math.uvic.ca}
\address{Aidan  Sims\\ School of Mathematics and Applied Statistics\\
University of Wollongong\\NSW 2522\\Australia}
\email{asims@uow.edu.au}
\date{21 February 2014, with minor revisions 29 April 2014}
\subjclass[2010]{46L30, 46L55}
\thanks{This research has been supported by the Marsden Fund of the Royal Society of New Zealand, the Natural Sciences and Engineering Research Council of Canada, and the Australian Research Council. The authors are grateful for the opportunity to work on this project at the Banff International Research Station in November 2013.}
\begin{document}
\begin{abstract}
We consider the dynamics on the $C^*$-algebras of finite graphs obtained by lifting the gauge action to an action of the real line. Enomoto, Fujii and Watatani proved that if the vertex matrix of the graph is irreducible, then the dynamics on the graph algebra admits a single KMS state. We have previously studied the dynamics on the Toeplitz algebra, and explicitly described a finite-dimensional simplex of KMS states for inverse temperatures above a critical value. Here we study the KMS states for graphs with reducible vertex matrix, and for inverse temperatures at and below the critical value. We prove a general result which describes all the KMS states at a fixed inverse temperature, and then apply this theorem to a variety of examples. We find that there can be many patterns of phase transition, depending on the behaviour of paths in the underlying graph.
\end{abstract}

\maketitle

\section{Introduction}

Composing the gauge action of $\TT$ with the map $t\mapsto e^{it}$ gives a natural dynamics on any Cuntz-Krieger algebra or graph algebra. Enomoto, Fujii and Watatani  proved thirty years ago that for a simple Cuntz-Krieger algebra $\Oo_A$, this dynamics admits a unique KMS state, and that this state has  inverse temperature the natural logarithm $\ln\rho(A)$ of the spectral radius $\rho(A)$ (which is also the Perron-Frobenius eigenvalue of $A$) \cite{EFW}. Recently Kajiwara and Watatani revisited this question for the $C^*$-algebras of finite graphs with sources, and found many more KMS states \cite{KW}. Other authors are currently interested in KMS states on the $C^*$-algebras of infinite graphs \cite{T, CL} or on the $C^*$-algebras of higher-rank graphs \cite{Y,aHLRSk}.

We recently studied KMS states on the Toeplitz algebra $\T C^*(E)$ of a finite graph $E$~\cite{aHLRS1}. For inverse temperatures $\beta$ larger than a critical value $\beta_c$, we described a simplex of KMS$_\beta$ states whose dimension is determined by the number of vertices in the graph \cite[Theorem~3.1]{aHLRS1}. This gave a concrete implementation of an earlier result of Exel and Laca \cite[Theorem~18.4]{EL}, at least as it applies to the gauge dynamics. The critical inverse temperature $\beta_c$ in \cite{aHLRS1} is $\ln\rho(A)$ where $A$ is the vertex matrix of the graph $E$. When $A$ is irreducible in the sense of Perron-Frobenius theory (and in particular if $C^*(E)$ is simple), we showed that there is a unique KMS$_{\ln\rho(A)}$ state on $\T C^*(E)$, and that this state factors through $C^*(E)$. 

Here we consider a finite graph $E$ whose vertex matrix $A$ is reducible, and aim to find all the KMS states on $\T C^*(E)$ and $C^*(E)$. We have organised our results so that we can describe the KMS states at each fixed inverse temperature. From \cite[Theorem~3.1]{aHLRS1}, we already have a concrete description of the simplex of KMS$_\beta$ states on $\T C^*(E)$ for $\beta>\ln\rho(A)$, and we know exactly which ones factor through $C^*(E)$ \cite[Corollary~6.1]{aHLRS1}.

Our first main theorem concerns the critical value $\beta=\ln \rho(A)$ (Theorem~\ref{KMScrit}). It identifies two different families of extreme KMS$_{\ln \rho(A)}$ states. The first family $\{\psi_C\}$ is parametrised by a set of strongly connected components $C$ of $E$ such that the matrix $A_C:=A|_{C\times C}$ satisfies $\beta=\ln \rho(A_C)$ (in the theorem we say exactly which components belong to this set). The states $\psi_C$ all factor through $C^*(E)$. Then we consider the hereditary closure $H$ in $E^0$ of the components $C$ with $\beta=\ln \rho(A_C)$, and the complementary graph $E\backslash H$ with vertex set $E^0\backslash H$. The second family $\{\phi_{v}\}$ of extremal KMS$_{\ln \rho(A)}$ consists of states which factor through a natural quotient map of $\T C^*(E)$ onto $\T C^*(E\backslash H)$ (see Proposition~\ref{quotmapH}), and which is parametrised by $E^0\backslash H$. The convex hull of $\{\psi_C\}\cup\{\phi_v\}$ is the full simplex of KMS$_{\ln \rho(A)}$ states. The proof of Theorem~\ref{KMScrit} involves some rather intricate computations using the Perron-Frobenius theory for the matrices $A_C$. 

In \S\ref{sec:allKMS}, we describe the KMS$_{\ln\rho(A)}$ states for a fixed inverse temperature $\beta$ satisfying $\beta<\ln\rho(A)$. In Theorem~\ref{thm:altogether}, we consider the hereditary closure $H_\beta$ of the connected components $C$ with $\ln\rho(A_C)>\beta$. If $\beta>\ln\rho(A_{E^0\backslash H_\beta})$, the KMS$_{\beta}$ states all factor through the quotient $\T C^*(E\backslash H_\beta)$, and an application of \cite[Theorem~3.1]{aHLRS1} gives a concrete description of these states. If $\beta=\ln\rho(A_{E^0\backslash H_\beta})$, then applying Theorem~\ref{KMScrit} to $E\backslash H_\beta$ shows that there are two families $\{\psi_C\}$ and $\{\phi_{v}\}$ of extremal KMS$_\beta$ states. Theorem~\ref{thm:altogether} also identifies the states which factor through $C^*(E)$, where there are some tricky subtleties involving the saturations of the sets $H_\beta$ and $K_\beta$. 

By applying Theorem~\ref{thm:altogether} as $\beta$ decreases, we can in principle find all KMS states on $\T C^*(E)$ and $C^*(E)$ for every finite graph $E$. In \S\ref{sec:exs}, we show how this works on a variety of examples, and find in particular that there are graphs for which our dynamics has many phase transitions. These examples shed considerable light on the possible behaviour of KMS states, and in particular on what happens between the various critical inverse temperatures discussed in \cite[\S14]{EL}. We close with a section of concluding remarks in which we discuss the range of possible inverse temperatures, and the connections with the results of \cite{EL, CL}.

\section{Background}

\subsection{Directed graphs and their Toeplitz algebras} Suppose that $E=(E^0,E^1,r,s)$ is a directed graph. We use the conventions of \cite{CBMS} for paths, so that, for example, $ef$ is a path when $s(e)=r(f)$. We write $E^n$ for the set of paths of length $n$, and $E^*:=\bigcup_{n\in \NN}E^n$. For vertices $v,w$, we write $vE^nw$ for the set $\{\mu\in E^n: r(\mu)=v\text{ and }s(\mu)=w\}$ (and we allow variations on this theme).

A Toeplitz-Cuntz-Krieger family $(P,S)$ consists of mutually orthogonal projections $\{P_v:v\in E^0\}$ and partial isometries $\{S_e:e\in E^1\}$ such that $S_e^*S_e=P_{s(e)}$ for every $e\in E^1$ and 
\begin{equation}\label{TCK}
P_v\geq \sum_{e\in F}S_eS_e^*\text{ for every $v\in E^0$ and finite subset $F$ of $vE^1=r^{-1}(v)$.}
\end{equation} 
Here we consider only finite graphs, and then it suffices to impose the inequality \eqref{TCK} for $F=vE^1$. 
The Toeplitz algebra $\T C^*(E)$ is generated by a universal Toeplitz-Cuntz-Krieger family $(p,s)$; the existence of such an algebra was proved in \cite[Theorem~4.1]{FR}. For $\mu\in E^n$, we define $s_\mu:=s_{\mu_1}s_{\mu_2}\cdots s_{\mu_n}$. Then each $s_\mu$ is also a partial isometry, and we have
\[
\T C^*(E):=\clsp\big\{s_\mu s_\nu^*:\mu,\nu\in E^*,\;s(\mu)=s(\nu)\big\}.
\]

We shall work mostly in the Toeplitz algebra $\T C^*(E)$ rather the usual graph algebra $C^*(E)$, and it is therefore convenient to view  $C^*(E)$ as the quotient of $\T C^*(E)$ by the ideal generated by 
\[
\Big\{ p_v-\sum_{r(e)=v}s_es_e^*:v\in E^0\Big\}.
\]
We write $\pi_E$ for the quotient map of $\T C^*(E)$ onto $C^*(E)$, and $\bar p_v:=\pi_E(p_v)$, $\bar s_e:=\pi_E(s_e)$. The pair $(\bar p,\bar s)$ is then universal for Cuntz-Krieger families in the usual way.

\subsection{Ideals in Toeplitz algebras}\label{idealsinT}

We are interested in graphs whose $C^*$-algebras $C^*(E)$ are not simple. The standard theory (as in \cite{KPRR}, \cite{BPRS} or \cite[\S4]{CBMS}) says that ideals in $C^*(E)$ are determined by subsets $H$ of $E^0$ which are both hereditary ($v\in H$ and $vE^*w\not=\emptyset$ imply $w\in H$) and saturated ($s(vE^1)\subset H$ implies $v\in H$). In the Toeplitz algebra, there are more ideals, and in particular every hereditary subset determines one. We need to know what the quotient is.

\begin{prop}\label{quotmapH}
Suppose that $H$ is a hereditary set of vertices in a directed graph $E$ and that $H$ is not all of $E^0$. Then $E\backslash H:=(E^0\backslash H, s^{-1}(E^0\backslash H),r,s)$ is a directed graph, and there is a homomorphism $q_H:\T C^*(E)\to \T C^*(E\backslash H)=C^*(p^{E\backslash H}, s^{E\backslash H})$ such that 
\begin{equation}\label{defTCKquot} 
q_H(p_v)=\begin{cases}
p^{E\backslash H}_v&\text{if $v\in E^0\backslash H$}\\
0&\text{if $v\in H$,}
\end{cases}
\quad\text{and}\quad
q_H(s_e)=\begin{cases}
s^{E\backslash H}_e&\text{if $s(e)\in E^0\backslash H$}\\
0&\text{if $s(e)\in H$.}
\end{cases}
\end{equation}
The homomorphism is surjective, and its kernel is the ideal $J_H$ generated by $\{p_v:v\in H\}$.
\end{prop}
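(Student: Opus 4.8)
The plan is to obtain $q_H$ from the universal property of $\T C^*(E)$, verify surjectivity directly, and then compute the kernel by constructing an explicit inverse on the quotient $\T C^*(E)/J_H$.

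First I would check that $E\backslash H$ is a graph: for an edge $e$ with $s(e)\in E^0\backslash H$ one needs $r(e)\in E^0\backslash H$, and this is exactly the hereditary property applied to the path $e$ from $r(e)$ to $s(e)$. To construct $q_H$, define a family $(Q,T)$ in $\T C^*(E\backslash H)$ indexed by $E^0$ and $E^1$ by the right-hand sides of \eqref{defTCKquot}, so that $Q_v=p^{E\backslash H}_v$ or $0$ according as $v\notin H$ or $v\in H$, and $T_e=s^{E\backslash H}_e$ or $0$ according as $s(e)\notin H$ or $s(e)\in H$. I would then verify that $(Q,T)$ is a Toeplitz-Cuntz-Krieger family for $E$. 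Mutual orthogonality of the $Q_v$ and the relation $T_e^*T_e=Q_{s(e)}$ are immediate from the two cases. The one point needing care is the inequality \eqref{TCK}: for $v\in E^0\backslash H$ it reduces, after discarding the edges $e$ with $s(e)\in H$ (for which $T_e=0$), to the corresponding inequality for $E\backslash H$; for $v\in H$ the hereditary property forces $s(e)\in H$ and hence $T_e=0$ for every $e\in vE^1$, so both sides vanish. The universal property of $\T C^*(E)$ then yields $q_H$ with the stated values on generators.

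Surjectivity is clear, since the range of $q_H$ contains every $p^{E\backslash H}_v$ and every $s^{E\backslash H}_e$, and these generate $\T C^*(E\backslash H)$. Because $q_H(p_v)=0$ for $v\in H$ and $\ker q_H$ is an ideal, we have $J_H\subseteq\ker q_H$; the substance of the proposition is the reverse inclusion, equivalently the injectivity of the induced homomorphism $\bar q_H\colon\T C^*(E)/J_H\to\T C^*(E\backslash H)$. To prove this I would construct an inverse. Writing $\rho\colon\T C^*(E)\to\T C^*(E)/J_H$ for the quotient map, the elements $\{\rho(p_v):v\in E^0\backslash H\}$ and $\{\rho(s_e):e\in(E\backslash H)^1\}$ form a Toeplitz-Cuntz-Krieger family for $E\backslash H$: orthogonality and the relation $\rho(s_e)^*\rho(s_e)=\rho(p_{s(e)})$ pass through $\rho$, and applying $\rho$ to \eqref{TCK} in $\T C^*(E)$ gives the inequality in the quotient. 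The universal property of $\T C^*(E\backslash H)$ then produces $\phi\colon\T C^*(E\backslash H)\to\T C^*(E)/J_H$ with $\phi(p^{E\backslash H}_v)=\rho(p_v)$ and $\phi(s^{E\backslash H}_e)=\rho(s_e)$. Checking $\bar q_H\circ\phi=\mathrm{id}$ on the generators of $\T C^*(E\backslash H)$ is routine. For $\phi\circ\bar q_H=\mathrm{id}$ on $\T C^*(E)/J_H$ the key observation is that the generators $\rho(p_v)$ with $v\in H$ and $\rho(s_e)$ with $s(e)\in H$ all vanish: indeed $s_e=s_ep_{s(e)}\in J_H$ when $s(e)\in H$, so $\rho(s_e)=0$, matching $\phi(\bar q_H(\rho(s_e)))=\phi(q_H(s_e))=0$. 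On the remaining generators the identity is immediate, so $\phi$ and $\bar q_H$ are mutually inverse and $\ker q_H=J_H$.

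The main obstacle is the kernel computation; the clean route is to realise $\T C^*(E)/J_H$ as a copy of $\T C^*(E\backslash H)$ via the inverse $\phi$, rather than arguing with elements of the kernel directly. The one genuinely graph-theoretic input throughout is the hereditary hypothesis, used both to make $E\backslash H$ a graph and to kill the Cuntz-Krieger sum at vertices of $H$.
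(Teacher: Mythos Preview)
Your proof is correct and follows essentially the same route as the paper's: build $q_H$ from the universal property via a Toeplitz--Cuntz--Krieger $E$-family in $\T C^*(E\backslash H)$, observe $J_H\subseteq\ker q_H$, and then construct an inverse to the induced map $\bar q_H$ by exhibiting a Toeplitz--Cuntz--Krieger $(E\backslash H)$-family in $\T C^*(E)/J_H$. The only cosmetic difference is that you verify $\phi\circ\bar q_H=\mathrm{id}$ directly on generators (using $s_e=s_ep_{s(e)}\in J_H$ for $s(e)\in H$), whereas the paper first notes that the inverse map is surjective and then deduces that $\bar q_H$ is its two-sided inverse.
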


\begin{proof}
Since $s$ maps $(E\backslash H)^1:=s^{-1}(E^0\backslash H)$ to $(E\backslash H)^0:=E^0\backslash H$, and since $r(e)\in H$ implies $s(e)\in H$, $r$ maps $(E\backslash H)^1$ into $(E\backslash H)^0$ also. Thus $E\backslash H$ is a directed graph. The formulas on the right-hand sides of \eqref{defTCKquot} define a Toeplitz-Cuntz-Krieger $E$-family in $\T C^*(E\backslash H)$, and hence the universal property of $\T C^*(E)$ gives the existence of the homomorphism $q_H$. It is surjective because its range contains all the generators of $\T C^*(E\backslash H)$. The kernel of $q_H$ contains all the generators of $J_H$, so $J_H\subset \ker q_H$, and hence $q_H$ factors through the quotient map $q:\T C^*(E)\to \T C^*(E)/ J_H$. We write $\bar q_H$ for the homomorphism on $\T C^*(E)/ J_H$ such that $q_H=\bar q_H\circ q$.

To see that $J_H$ is all of $\ker q_H$, we construct a left inverse for $\bar q_H$. A quick check shows that the elements $\{q(p_v),q(s_e):v\in E^0\backslash H,\; e\in s^{-1}(E^0\backslash H)\}$ form a Toeplitz-Cuntz-Krieger $(E\backslash H)$-family in $\T C^*(E)/ J_H$. (It is crucial that we are not trying to impose a Cuntz-Krieger relation at vertices in $E^0\backslash H$ which receive edges from $H$.) Thus there is a homomorphism  $\rho:\T C^*(E\backslash H)\to \T C^*(E)/J_H$ such that $\rho(p^{E\backslash H}_v)=q(p_v)$ and $\rho(s^{E\backslash H}_e)=q(s_e)$. Since $s(e)\in H$ implies that $q(s_e)=0$, the range of $\rho$ contains the images of all the generators of $\T C^*(E)$, and hence $\rho$ is surjective. A quick check shows that $\bar q_H\circ \rho$ fixes the generators of $\T C^*(E\backslash H)$, and hence is the identity on $\T C^*(E\backslash H)$. Now the surjectivity of $\rho$ implies that $\rho\circ\bar q_H$ is the identity on $\T C^*(E\backslash H)$, so $\bar q_H$ is injective, and we have $\ker q_H=J_H$.
\end{proof}

\subsection{Decompositions of the vertex matrix}

Let $E$ be a finite directed graph. The vertex matrix of $E$ is the $E^0\times E^0$ matrix $A$ with entries $A(v,w)=|vE^1w|$; the powers of $A$ then have entries $A^n(v,w)=|vE^nw|$. We will do computations using block decompositions of the vertex matrix $A$. For subsets $C,D\subset E^0$, we write $A_{C,D}$ for the $C\times D$ subblock of $A$, and $A_C:=A_{C,C}$. We usually choose decompositions of $E^0=C_1\sqcup C_2\sqcup\cdots\sqcup C_n$ such that the associated block decomposition of $A$ is upper-triangular.

For $v,w\in E^0$, we write $v\leq w\Longleftrightarrow vE^*w\not=\emptyset$, and $v\sim w\Longleftrightarrow v\leq w \text{ and }w\leq v$. It is easy to check that $\sim$ is an equivalence relation on $E^0$ (we have $v\sim v$ for all $v\in E^0$ because $E^0\subset E^*$). We write $E^0/\!\!\sim$ for the set of equivalence classes, and refer to these equivalence classes as the \emph{strongly connected components} of $E$.  When $C\in E^0/\!\!\sim$, the matrix $A_C$ is either a $1\times 1$ zero matrix (if $C=\{v\}$ is a single vertex with no loops, in which case we say $C$ is a trivial component), or an irreducible matrix in the sense of Perron-Frobenius theory (so that for every $v,w\in C$, there exists $n$ such that $A^n(v,w)>0$). 

We next order the vertex set $E^0$ to ensure that the vertex matrix takes a convenient block upper-triangular form. The relation $\leq $ descends to a well-defined partial order on $E^0/\!\!\sim$; when $C\leq D$, we say that $D$ talks to $C$. We list first the trivial components for which $A_C=(0)$ and which do not talk to nontrivial components; we list them in an order such that $w$ appears after $v$ when $v\leq w$. Next we list the components which are minimal for the order $\leq$ on the remaining components, grouping the vertices in the same component together. Then we list the trivial components which talk only to the components we have listed so far, and so on. This decomposes $A$ as a block upper-triangular matrix in which the diagonal components $A_C$ are either $1\times 1$ zero matrices or are irreducible. We will refer to such a decomposition as a \emph{Seneta decomposition} of $A$. (Though since Seneta uses different conventions in \cite[\S1.2]{Seneta}, the decomposition he discusses there is a block lower-triangular matrix and our minimal components would become maximal\footnote{Unfortunately, there is no universal convention as to whether $A(v,w)$ should refer to edges from $w$ to $v$ or edges from $v$ to $w$. Our
convention arises from viewing directed edges as arrows in a category, in
which case one expects $ef:=e\circ f$ to have source $s(f)$ and range
$r(e)$. This convention is standard in many places: for example, in the
substantial literature on higher-rank graphs, which have strong links to
higher-dimensional subshifts \cite{KP, PRW}, and in studying equivalences
for categories of modules over path algebras of quivers \cite{S1, S2}, see
especially the discussion in \cite[\S5.4]{S1}.}.)

\subsection{KMS states} 

We denote the gauge actions of $\TT$ on $\T C^*(E)$ and $C^*(E)$ by $\gamma$. We are interested in the dynamics $\alpha$ given, on both $\T C^*(E)$ and $C^*(E)$, by $\alpha_t=\gamma_{e^{it}}$. For KMS states, we use the conventions of our previous paper \cite{aHLRS1}. Thus we know from \cite[Proposition~2.1]{aHLRS1} that a state $\phi$ of $\T C^*(E)$ is a KMS$_\beta$ state for some $\beta\in \RR$ if and only if
\begin{equation}\label{workhorse}
\phi(s_\mu s_\nu^*)=\delta_{\mu,\nu}e^{-\beta|\mu|}\phi(p_{s(\mu)})\quad\text{for all $\mu,\nu\in E^*$.} 
\end{equation}
For fixed $\beta$ the KMS$_\beta$ states on $(\T C^*(E),\alpha)$ form a simplex, which we shall refer to as the \emph{KMS$_\beta$ simplex} of $(\T C^*(E),\alpha)$. The KMS$_0$ states are the invariant traces.

Since the results of \cite[\S3]{aHLRS1} already describe all the KMS states for large inverse temperatures, we do not have anything new to say about ground states or KMS$_\infty$ states.

\section{KMS states and quotients}

When the vertex matrix $A$ of $E$ is irreducible, there are no KMS$_\beta$ states on the Toeplitz algebra $\T C^*(E)$ when $\beta<\ln \rho(A)$. So it seems reasonable that if $C$ is a strongly connected component with $\ln \rho(A_C)>\beta$, then every KMS$_\beta$ state must vanish on vertex projections $p_v$ with $v\in C$. The key to our analysis of reducible graphs is that KMS states must also vanish on any projections $p_v$ for vertices $v$ that connect to such components $C$. The next result makes this precise.

\begin{prop}\label{old2.5}
Suppose that $H$ is a hereditary subset of $E^0$, and $q_H:\T C^*(E)\to \T C^*(E\backslash H)$ is the surjection of Proposition~\ref{quotmapH}. Then for every $\beta\in [0,\infty)$, $q_H^*:\psi\mapsto \psi\circ q_H$ is an affine injection of the KMS$_\beta$ simplex of $(\T C^*(E\backslash H),\alpha)$ into the KMS$_\beta$ simplex of $(\T C^*(E),\alpha)$. If $\{C\in H/\!\!\sim:\ln\rho(A_C)>\beta\}$ generates $H$ as a hereditary subset of $E^0$, then $\phi(p_v)=0$ for every KMS$_\beta$ state $\phi$ on $\T C^*(E)$ and every $v\in H$; if in addition $H$ is not all of $E^0$, then $q_H^*$ is surjective.
\end{prop}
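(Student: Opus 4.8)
The plan is to prove the three assertions in turn, pulling the whole argument back to the subinvariance of the vector $(\phi(p_v))_v$ and a Perron--Frobenius estimate.

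First I would establish that $q_H^*$ lands in, and injects into, the right simplex. Since $\alpha$ lifts the gauge action and $q_H$ sends each $s_e$ to a generator $s^{E\backslash H}_e$ or to $0$, the map $q_H$ commutes with the gauge automorphisms and hence intertwines the two copies of $\alpha$. Because $E$ is finite, $\sum_{v\in E^0}p_v$ is the identity of $\T C^*(E)$, and $q_H$ carries it to the identity $\sum_{v\in E^0\backslash H}p^{E\backslash H}_v$ of $\T C^*(E\backslash H)$; so $\psi\circ q_H$ is a state whenever $\psi$ is, and composing the KMS condition with the equivariant homomorphism $q_H$ shows it is KMS$_\beta$. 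The resulting map $q_H^*$ is clearly affine, and it is injective because $q_H$ is surjective.

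Next I would prove the vanishing statement. Writing $m_v:=\phi(p_v)\ge 0$ and applying $\phi$ to the relation $p_v\ge\sum_{e\in vE^1}s_es_e^*$, using \eqref{workhorse} to get $\phi(s_es_e^*)=e^{-\beta}\phi(p_{s(e)})$, yields the subinvariance estimate $e^{\beta}m_v\ge\sum_{w\in E^0}A(v,w)m_w$ for all $v$. Fix a component $C$ with $\ln\rho(A_C)>\beta$; it is necessarily nontrivial (a trivial component has $A_C=(0)$ and $\ln\rho(A_C)=-\infty$), so $A_C$ is irreducible. Discarding the nonnegative terms with $w\notin C$ gives $A_C m_C\le e^{\beta}m_C$ for the nonnegative subvector $m_C=(m_v)_{v\in C}$, and pairing this with a strictly positive left Perron--Frobenius eigenvector $x$ (so $xA_C=\rho(A_C)x$) gives $\rho(A_C)(xm_C)=(xA_C)m_C\le e^{\beta}(xm_C)$. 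Since $\rho(A_C)>e^{\beta}$ and $xm_C\ge 0$, this forces $xm_C=0$, whence $m_C=0$ by positivity of $x$. Finally the same subinvariance estimate shows that $m_v=0$ propagates $m_{s(e)}=0$ along every edge $e\in vE^1$, so $m$ vanishes at every vertex reachable from a component with $\ln\rho(A_C)>\beta$; by hypothesis these are exactly the vertices of $H$. The hard part will be this middle step: correctly isolating the one-component inequality $A_Cm_C\le e^{\beta}m_C$ (noting that edges emitted out of $C$ only add nonnegative terms and so do no harm) and deploying the strictly positive eigenvector, which is what converts $\ln\rho(A_C)>\beta$ into $m_C=0$.

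For the surjectivity claim, assume $H\ne E^0$. Since $\phi(p_v)=0$ for $v\in H$, a Cauchy--Schwarz argument for the positive functional $\phi$ shows $\phi(ap_vb)=0$ for all $a,b$, so $\phi$ annihilates the ideal $J_H=\ker q_H$ of Proposition~\ref{quotmapH}. Hence $\phi$ descends to a state $\psi$ on $\T C^*(E)/J_H\cong\T C^*(E\backslash H)$ with $\phi=\psi\circ q_H$; verifying \eqref{workhorse} for $\psi$ on the spanning elements $s^{E\backslash H}_\mu(s^{E\backslash H}_\nu)^*=q_H(s_\mu s_\nu^*)$ shows $\psi$ is KMS$_\beta$, so $\phi=q_H^*(\psi)$ lies in the range of $q_H^*$.
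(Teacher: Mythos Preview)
Your argument is correct and follows essentially the same route as the paper: establish subinvariance of $m=(\phi(p_v))$, use Perron--Frobenius on each component $C$ with $\ln\rho(A_C)>\beta$ to force $m|_C=0$, propagate the vanishing along the hereditary closure, and then factor $\phi$ through $\T C^*(E)/J_H$. The only cosmetic differences are that the paper packages the vanishing step as a separate, slightly more general Lemma (Lemma~\ref{Lemfactor}, which also treats the borderline case $\beta=\ln\rho(A_C)$ needed later) and appeals to \cite[Theorem~1.6]{Seneta} in place of your explicit left-eigenvector computation, and that it cites \cite[Lemma~2.2]{aHLRS1} rather than redoing the Cauchy--Schwarz argument for the factorisation.
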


\begin{lem}\label{Lemfactor}
Suppose that $H$ is the hereditary subset of $E^0$ generated by $\Cc_1\subset E^0/\!\!\sim$, and that $\beta\leq \ln \rho(A_C)$ for all $C\in \Cc_1$. Suppose that $\phi$ is a KMS$_\beta$ state on $(\T C^*(E),\alpha)$, and that $v$ belongs to the complement of $\bigcup\{C\in\Cc_1:\beta=\ln\rho(A_C)\}$ in $H$. Then $\phi(p_v)=0$.
\end{lem}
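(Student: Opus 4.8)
The plan is to work entirely with the nonnegative numbers $m_v:=\phi(p_v)$ (nonnegative because $\phi$ is a state and each $p_v$ is a projection). First I would feed the Toeplitz--Cuntz--Krieger inequality \eqref{TCK}, taken with $F=vE^1$, into $\phi$ and combine it with the KMS characterisation \eqref{workhorse} applied to $\mu=\nu=e$, which gives $\phi(s_es_e^*)=e^{-\beta}\phi(p_{s(e)})=e^{-\beta}m_{s(e)}$. Since $\sum_{e\in vE^1}m_{s(e)}=\sum_{w}A(v,w)m_w$, this yields the subinvariance relations
\[
e^{\beta}m_v\;\ge\;\sum_{w\in E^0}A(v,w)\,m_w\qquad(v\in E^0).
\]
Everything will be extracted from these inequalities together with Perron--Frobenius theory.

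The first and purely combinatorial observation is a propagation principle: if $m_x=0$, then the $x$-th relation reads $0\ge\sum_w A(x,w)m_w$, and since every summand is nonnegative we get $m_w=0$ for each in-neighbour $w$ of $x$ (each $w$ with $A(x,w)>0$). Iterating along paths, $m_x=0$ forces $m_y=0$ for every $y$ with $xE^*y\neq\emptyset$, i.e.\ for every $y$ that connects to $x$. Thus, to prove the lemma, it suffices to produce for each relevant $v$ a vertex $x$ with $m_x=0$ that $v$ connects to; and since $H$ is the hereditary closure of $\Cc_1$, every $v\in H$ satisfies $uE^*v\neq\emptyset$ for some $u$ in some $C\in\Cc_1$, so $v$ connects into such a $C$.

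The crux is to ignite the vanishing at the components of $\Cc_1$. Because $\rho(A_C)\ge e^{\beta}>0$, each $C\in\Cc_1$ is nontrivial, so $A_C$ is irreducible and carries a strictly positive left Perron eigenvector $\ell_C$ with $\ell_C A_C=\rho(A_C)\ell_C$. Setting $b_v:=\sum_{w\notin C}A(v,w)m_w\ge0$ and restricting the subinvariance to the rows indexed by $C$ gives $e^{\beta}m_C\ge A_C m_C+b$; pairing with $\ell_C$ and using the eigenvector equation collapses the diagonal term and leaves
\[
\bigl(e^{\beta}-\rho(A_C)\bigr)\,(\ell_C\!\cdot\! m_C)\;\ge\;\ell_C\!\cdot\! b\;\ge\;0 .
\]
If $C$ is supercritical ($\beta<\ln\rho(A_C)$, so $e^{\beta}<\rho(A_C)$) the left-hand side is $\le0$, forcing both $\ell_C\!\cdot\!m_C=0$ (hence $m_C=0$) and $\ell_C\!\cdot\!b=0$; if $C\in\Cc_1':=\{C\in\Cc_1:\beta=\ln\rho(A_C)\}$ the left-hand side is $0$, forcing $\ell_C\!\cdot\!b=0$. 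In both cases $\ell_C>0$ gives $b_v=0$ for all $v\in C$, i.e.\ $m_w=0$ for every in-neighbour $w\notin C$ of $C$.

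Finally I would assemble the pieces. Fix $v$ in the complement of $\bigcup\Cc_1'$ inside $H$ and choose $C\in\Cc_1$ into which $v$ connects. If $C$ is supercritical, then $m_C=0$ and the propagation principle gives $m_v=0$. If $C\in\Cc_1'$, then $v\notin C$ (since $v\notin\bigcup\Cc_1'$), so any path from $v$ into $C$ has a last vertex $x$ lying outside $C$; $x$ is then an in-neighbour of $C$, whence $m_x=0$ by the eigenvector computation, and as $v$ connects to $x$ we again conclude $m_v=0$. The main obstacle is exactly this critical case: here $m_C$ itself need not vanish (indeed the states $\psi_C$ must keep it nonzero), so one cannot start the vanishing inside $C$ and must instead rely on the \emph{exact} Perron cancellation above to annihilate the in-neighbours of $C$ while leaving $C$ untouched. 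Securing that cancellation, and checking it is not disturbed by the nonnegative off-diagonal contributions $b$, is the delicate point; once it is in hand, the combinatorial propagation of the second paragraph disposes of everything upstream.
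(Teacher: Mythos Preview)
Your argument is correct and follows the same overall strategy as the paper's proof: derive the subinvariance $Am^\phi\le e^\beta m^\phi$, restrict it to a component $C\in\Cc_1$ that the vertex $v$ feeds into, use Perron--Frobenius theory on $A_C$ to force some vanishing, and then propagate the vanishing back along the path to $v$.

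The one genuine difference is the Perron--Frobenius step. The paper invokes \cite[Theorem~1.6]{Seneta}: in the supercritical case this immediately gives $m^\phi|_C=0$, while in the critical case it says that $m^\phi|_C$ is either zero or a multiple of the \emph{right} Perron--Frobenius eigenvector, and a componentwise comparison at the specific vertex $r(e)$ (your ``last vertex outside $C$'') then forces $m^\phi_{s(e)}=0$. You instead pair with the \emph{left} Perron eigenvector $\ell_C$, which collapses the diagonal term in one stroke and yields $\ell_C\cdot b=0$, hence $b=0$, uniformly in both cases. This is slightly more economical: it avoids the case split on whether $m^\phi|_C$ vanishes, and it does not require the full strength of Seneta's subinvariance theorem, only the existence of a strictly positive left eigenvector for an irreducible nonnegative matrix. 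The paper's route, on the other hand, extracts the extra information that in the critical case $m^\phi|_C$ is proportional to the Perron eigenvector, which is not needed here but is used later in the proof of Theorem~\ref{KMScrit}(\ref{critb}).
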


When $\{C\in H/\!\!\sim:\ln\rho(A_C)>\beta\}$ generates $H$, as in Proposition~\ref{old2.5}, Lemma~\ref{Lemfactor} applies to every $v\in H$ with $\Cc_1=\{C\in H/\!\!\sim:\ln\rho(A_C)>\beta\}$. The extra generality in the Lemma will be useful in the proof of Proposition~\ref{discardbottom} below.

\begin{proof}
For every path $\mu$ with $s(\mu)=v$, \eqref{TCK} implies that $p_{r(\mu)}\geq s_\mu s_\mu^*$, and hence
\begin{equation}\label{estphipv}
0\leq \phi(p_v)=\phi(s_\mu^* s_\mu)=e^{\beta|\mu|}\phi(s_\mu s_\mu^*)\leq e^{\beta|\mu|}\phi(p_{r(\mu)}).
\end{equation}
Proposition~2.1(c) of \cite{aHLRS1}  implies that the vector $m^\phi:=(\phi(p_w))$ in $[0,1]^{E^0}$ satisfies the subinvariance relation $Am^\phi\leq e^\beta m^\phi$, and for every $C\in \Cc_1$ we have
\begin{equation}\label{locsubinv}
A_C(m^\phi|_C)\leq A_C(m^\phi|_C)+A_{C,H\backslash C}(m^\phi|_{H\backslash C})=(Am^\phi)|_C\leq e^\beta m^\phi|_C.
\end{equation}

Since $v\in H$ and $H$ is generated by $\Cc_1$, there exists $C\in \Cc_1$ such that $CE^*v\not=\emptyset$. Then either $\beta<\ln\rho(A_C)$ or $\beta=\ln\rho(A_C)$. Suppose that $\beta<\ln\rho(A_C)$. Then \eqref{locsubinv} and the last sentence in Theorem~1.6 of \cite{Seneta} imply that $m^\phi|_C=0$. We can therefore apply \eqref{estphipv} to any $\mu\in CE^*v$, and deduce that $\phi(p_v)=0$.

Now suppose that $\beta=\ln\rho(A_C)$. Then by hypothesis $v\notin C$, and there exists $\lambda\in CE^*v$ of the form $\lambda=e\mu$, where $e\in E^1$, $r(e)\in C$ and $s(e)\notin C$. If $m^\phi|_C=0$, then we can apply \eqref{estphipv} to $\mu$ and deduce that $\phi(p_v)=0$. So we suppose  that $m^\phi|_C\not=0$. Then \eqref{locsubinv} and Theorem~1.6 of \cite{Seneta} imply that $m^\phi|_C$ is a multiple of the Perron-Frobenius eigenvector for $A_C$. Since
\begin{align}\label{locsubinv2}
(A_C(m^\phi|_C))_{r(e)}&\leq (A_C(m^\phi|_C))_{r(e)}+A(r(e),s(e))m^\phi_{s(e)}\\&\leq((Am^\phi)|_C)_{r(e)}\leq e^\beta m^\phi_{r(e)}=\rho(A_C)(m^\phi|_C)_{r(e)},\notag
\end{align}
and the left and right ends of \eqref{locsubinv2} are equal, we deduce that $A(r(e),s(e))m^\phi_{s(e)}=0$ and $\phi(p_{r(\mu)})=m^\phi_{s(e)}=0$; now \eqref{estphipv} implies that $\phi(p_v)=0$.
\end{proof}

\begin{proof}[Proof of Proposition~\ref{old2.5}]
Since $(E\backslash H)^*=\{\mu\in E^*:s(\mu)\notin H\}$, we can deduce from \cite[Proposition~2.1(a)]{aHLRS1} that $\psi\circ q_H$ is a KMS state if and only if $\psi$ is. Since $q_H$ is surjective, $q_H^*$ is injective, and it is clearly weak* continuous and affine. To see the assertion about surjectivity, suppose that $\{C\in H/\!\!\sim:\ln\rho(A_C)>\beta\}$ generates $H$ and $\phi$ is a KMS$_\beta$ state of $(\T C^*(E),\alpha)$. Lemma~\ref{Lemfactor} implies that $\phi(p_v)=0$ for all $v\in H$. Now we can apply \cite[Lemma~2.2]{aHLRS1} with $\F=\{s_\mu s_\nu^*:\mu,\nu\in E^*\}$ and $P=\{p_v:v\in H\}$, and deduce that $\phi$ factors through a state of $\T C^*(E)/J_H=\T C^*(E)/\ker q_H$. Thus if $H\not= E^0$, there is a state $\psi$ of $\T C^*(E\backslash H)$ such that $\phi=\psi\circ q_H$. Since $q_H$ is surjective and is equivariant for the various actions $\alpha$, $\psi$ is a KMS$_\beta$ state of $(\T C^*(E\backslash H),\alpha)$.
\end{proof}

The analogue of Proposition~\ref{old2.5} for the graph algebra $C^*(E)$ has a slightly different hypothesis: it suffices that $\{C:\ln\rho(A_C)>\beta\}$ generates $H$ as a \emph{saturated} hereditary set. This happens because the identification of $C^*(E)/I_H$ with $C^*(E\backslash H)$ only works when $H$ is saturated (compare Proposition~\ref{quotmapH} with \cite[Theorem~4.1]{BPRS} or \cite[Theorem~4.9]{CBMS}).

\begin{prop}\label{old2.5c}
Suppose that $H$ is a saturated hereditary subset of $E^0$, and write $\bar q_H$ for the canonical surjection of $C^*(E)$ onto $C^*(E\backslash H)$. Then for every $\beta\in [0,\infty)$, $\bar q_H^*:\psi\mapsto \psi\circ \bar q_H$ is an affine injection of the KMS$_\beta$ simplex of $(C^*(E\backslash H),\alpha)$ into the KMS$_\beta$ simplex of $(C^*(E),\alpha)$. If $\{C\in H/\!\!\sim:\ln\rho(A_C)>\beta\}$ generates $H$ as a saturated hereditary subset of $E^0$, then $\phi(p_v)=0$ for every KMS$_\beta$ state $\phi$ on $C^*(E)$ and every $v\in H$; if in addition $H$ is not all of $E^0$, then $\bar q_H^*$ is surjective.
\end{prop}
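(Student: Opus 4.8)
The plan is to follow the proof of Proposition~\ref{old2.5} as closely as possible, replacing $\T C^*(E)$ by $C^*(E)$ and $q_H$ by $\bar q_H$, and then to supply the one genuinely new ingredient: the passage from the hereditary closure of $\Cc_1:=\{C\in H/\!\!\sim:\ln\rho(A_C)>\beta\}$ to its \emph{saturation}, which is where the Cuntz--Krieger relations come into play. For the first assertion I would argue just as in Proposition~\ref{old2.5}: since the paths in $E\backslash H$ are exactly the $\mu\in E^*$ with $s(\mu)\notin H$, the characterisation \eqref{workhorse} shows that $\psi\circ\bar q_H$ satisfies the KMS condition if and only if $\psi$ does, and the surjectivity of $\bar q_H$ makes $\bar q_H^*$ an injective, affine, weak* continuous map of the one KMS$_\beta$ simplex into the other.

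For the vanishing statement, I would fix a KMS$_\beta$ state $\phi$ on $C^*(E)$ and note that $\phi\circ\pi_E$ is a KMS$_\beta$ state on $\T C^*(E)$, because $\pi_E$ is equivariant (concretely, $\phi\circ\pi_E$ inherits \eqref{workhorse}). Let $H_0$ be the hereditary closure of $\Cc_1$ in $E^0$. Since $\ln\rho(A_C)>\beta$ holds \emph{strictly} for each $C\in\Cc_1$, the exceptional set $\bigcup\{C\in\Cc_1:\beta=\ln\rho(A_C)\}$ of Lemma~\ref{Lemfactor} is empty, so applying Lemma~\ref{Lemfactor} to $\phi\circ\pi_E$ and $H_0$ gives $\phi(\bar p_v)=(\phi\circ\pi_E)(p_v)=0$ for every $v\in H_0$; this is the base case. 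The saturated hereditary closure of $\Cc_1$ is $H$, obtained from $H_0$ by the saturation stages $H_{n+1}=H_n\cup\{v\in E^0:\emptyset\neq s(vE^1)\subseteq H_n\}$. I would then induct on $n$: if $v\in H_{n+1}\setminus H_n$, then $v$ emits finitely many edges and $s(e)\in H_n$ for every $e\in vE^1$, so the Cuntz--Krieger relation in $C^*(E)$ together with \eqref{workhorse} gives
\[
\phi(\bar p_v)=\sum_{e\in vE^1}\phi(\bar s_e\bar s_e^*)=e^{-\beta}\sum_{e\in vE^1}\phi(\bar p_{s(e)})=0,
\]
which completes the induction and yields $\phi(\bar p_v)=0$ for all $v\in H$.

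With the vanishing in hand I would establish surjectivity of $\bar q_H^*$ exactly as in Proposition~\ref{old2.5}: apply \cite[Lemma~2.2]{aHLRS1} with $\F=\{\bar s_\mu\bar s_\nu^*:\mu,\nu\in E^*\}$ and $P=\{\bar p_v:v\in H\}$ to conclude that $\phi$ vanishes on the ideal $I_H$ generated by $\{\bar p_v:v\in H\}$. Because $H$ is saturated hereditary, $C^*(E)/I_H$ is canonically isomorphic to $C^*(E\backslash H)$ by \cite[Theorem~4.1]{BPRS} (equivalently \cite[Theorem~4.9]{CBMS}); this identification is precisely where saturatedness is needed, and it explains why the hypothesis here must be stronger than in Proposition~\ref{old2.5}. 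Thus $\phi$ factors as $\phi=\psi\circ\bar q_H$ for a state $\psi$ of $C^*(E\backslash H)$, and $\psi$ is KMS$_\beta$ since $\bar q_H$ is equivariant and surjective, so $\bar q_H^*$ is onto whenever $H\neq E^0$.

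The hard part, and the only place where the argument genuinely departs from the Toeplitz case, is the saturation induction: in $\T C^*(E)$ a KMS state may be nonzero on the extra vertices added by saturation, so Proposition~\ref{old2.5} only controls the hereditary closure $H_0$, whereas here the Cuntz--Krieger relation forces the state to vanish on all of $H$. I would take particular care to check that the saturation stages only ever adjoin non-source vertices (so that the Cuntz--Krieger relation is available at each step), and that the process stabilises at $H$ after finitely many steps because $E^0$ is finite.
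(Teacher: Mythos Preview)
Your argument is correct and matches the paper's proof almost exactly. The only cosmetic differences are that the paper invokes Proposition~\ref{old2.5} rather than Lemma~\ref{Lemfactor} for the base case (these amount to the same thing, since Proposition~\ref{old2.5} is just Lemma~\ref{Lemfactor} packaged with the factoring step), and the paper isolates your saturation induction as a separate lemma (Lemma~\ref{extend=}\eqref{extendb}) rather than doing it inline; your explicit exclusion of sources in the saturation stages is a point the paper leaves implicit.
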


For the proof we need a simple lemma. Recall from the proof of \cite[Corollary~6.1]{aHLRS1}, for example,
that the saturation $\Sigma H$ of a hereditary set $H$ can be viewed as 
$\bigcup_{k=0}^\infty S_kH$, where $S_kH$ are the subsets of $E^0$ defined recursively 
by 
\begin{equation}\label{eq:satdef}
S_0H=H\quad\text{and}\quad S_{k+1}H=S_kH\cup\{v:s(vE^1)\subset S_kH\}.
\end{equation}

\begin{lem}\label{extend=}
Suppose that $H$ is a hereditary subset of $E^0$, $\beta\in [0,\infty)$, and $\phi,\psi$ are KMS$_\beta$ states on $(C^*(E), \alpha)$. 
\begin{enumerate}
\item\label{extenda} If $\phi(p_v)=\psi(p_v)$ for all $v\in H$, then $\phi=\psi$ on the ideal $I_H$ of $C^*(E)$ generated by $\{p_v:v\in H\}$.
\item\label{extendb} If $\phi(p_v)=0$ for all $v\in H$, then $\phi(p_v)=0$ for all $v$ in the saturation $\Sigma H$.
\end{enumerate}
\end{lem}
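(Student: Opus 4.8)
The plan is to prove the two parts by different mechanisms: part~\eqref{extenda} reduces everything to the vertex projections via the KMS condition \eqref{workhorse}, while part~\eqref{extendb} propagates the vanishing up the saturation using the Cuntz--Krieger \emph{equality}, which is available in $C^*(E)$ but not in $\T C^*(E)$.

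For part~\eqref{extenda}, I would first invoke the standard description of the ideal generated by a hereditary set: since $H$ is hereditary,
\[
I_H=\clsp\{\bar s_\mu \bar s_\nu^*:\mu,\nu\in E^*,\ s(\mu)=s(\nu)\in H\}.
\]
The inclusion $\supseteq$ is immediate from $\bar s_\mu\bar s_\nu^*=\bar s_\mu \bar p_{s(\mu)}\bar s_\nu^*$, and the reverse follows from a routine check that this closed span is invariant under left and right multiplication by the generators, where hereditariness of $H$ guarantees that the common source stays in $H$. Next I would note that a KMS$_\beta$ state $\phi$ on $C^*(E)$ pulls back along the equivariant surjection $\pi_E$ to a KMS$_\beta$ state $\phi\circ\pi_E$ on $\T C^*(E)$, so \eqref{workhorse} holds with $(p,s)$ replaced by $(\bar p,\bar s)$. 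Evaluating on a spanning element with $s(\mu)=s(\nu)\in H$ then gives $\phi(\bar s_\mu\bar s_\nu^*)=\delta_{\mu,\nu}e^{-\beta|\mu|}\phi(\bar p_{s(\mu)})$, and likewise for $\psi$; the off-diagonal terms vanish for both states, while on the diagonal the two values coincide because $\phi(\bar p_{s(\mu)})=\psi(\bar p_{s(\mu)})$ by hypothesis. Linearity and continuity then yield $\phi=\psi$ on $I_H$.

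For part~\eqref{extendb}, I would induct along the recursive description $\Sigma H=\bigcup_k S_kH$ of \eqref{eq:satdef}. The base case $S_0H=H$ is the hypothesis. Assuming $\phi(\bar p_w)=0$ for all $w\in S_kH$, take $v\in S_{k+1}H\setminus S_kH$, so that $s(e)\in S_kH$ for every $e\in vE^1$. Applying the Cuntz--Krieger relation $\bar p_v=\sum_{e\in vE^1}\bar s_e\bar s_e^*$ valid in $C^*(E)$, followed by \eqref{workhorse} in the form $\phi(\bar s_e\bar s_e^*)=e^{-\beta}\phi(\bar p_{s(e)})$, gives
\[
\phi(\bar p_v)=e^{-\beta}\sum_{e\in vE^1}\phi(\bar p_{s(e)})=0,
\]
the final equality holding by the inductive hypothesis (and trivially, as an empty sum, when $v$ emits no edges). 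This completes the induction.

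The crux of the argument, and the reason part~\eqref{extendb} must be stated for $C^*(E)$ rather than $\T C^*(E)$, is that propagating the vanishing through a single saturation step genuinely requires the equality $\bar p_v=\sum_e \bar s_e\bar s_e^*$: in the Toeplitz algebra one has only the inequality \eqref{TCK}, which bounds $\phi(p_v)$ below by a sum of nonnegative terms rather than pinning it to them, so the induction stalls. Part~\eqref{extenda}, by contrast, uses no Cuntz--Krieger relation at all; the only point there needing care is the spanning description of $I_H$ for an $H$ that is merely hereditary and not assumed saturated.
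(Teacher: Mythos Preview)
Your proof is correct. Part~\eqref{extendb} is exactly the paper's argument. For part~\eqref{extenda} you take a slightly different route: the paper first runs the same induction as in~\eqref{extendb} to extend the equality $\phi(\bar p_v)=\psi(\bar p_v)$ from $H$ to the saturation $\Sigma H$, then invokes the standard description $I_H=\clsp\{\bar s_\mu\bar s_\nu^*:s(\mu)=s(\nu)\in\Sigma H\}$ from \cite[Lemma~4.3]{BPRS}, and finally applies \eqref{workhorse}. You instead verify directly that already $I_H=\clsp\{\bar s_\mu\bar s_\nu^*:s(\mu)=s(\nu)\in H\}$ for hereditary $H$, which lets you skip the induction entirely. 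Your route is marginally more economical and, as you observe, makes no use of the Cuntz--Krieger relations, so your argument for~\eqref{extenda} in fact goes through verbatim for the ideal $J_H$ in $\T C^*(E)$; the paper's route has the minor advantage of citing the literature for the ideal description rather than reproving it.
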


\begin{proof}
For \eqref{extenda}, we first claim that $\phi(p_v)=\psi(p_v)$ for all $v\in \Sigma H$.  
We are given that $\phi(p_v)=\psi(p_v)$ for $v\in S_0H$. Suppose that 
$\phi(p_v)=\psi(p_v)$ for $v\in S_kH$. Then for $v\in S_{k+1}H$ and $e\in vE^1$, we have 
$s(e)\in S_kH$, and
\begin{align}\label{extendcomp}
\phi(p_v)
&= \phi\Big(\sum_{e \in vE^1} s_e s^*_e\Big)
= \sum_{e \in vE^1} e^{-\beta} \phi(p_{s(e)})\\
&=\sum_{e \in vE^1} e^{-\beta} \psi(p_{s(e)})=\psi(p_v).\notag
\end{align}
Thus by induction we have $\phi(p_v)=\psi(p_v)$ for all $v\in S_kH$ and all $k$, and hence for all $v\in \Sigma H$, as claimed.

Next, we recall that
\[
I_H=\clsp\{s_\mu s_\nu^*:s(\mu)=s(\nu)\in \Sigma H\}
\]
(see \cite[Lemma~4.3]{BPRS}, for example). For a typical spanning element $s_\mu s_\nu^*$, Equation (2.1) in \cite{aHLRS1} says that
\[
\phi(s_\mu s_\nu^*)=\delta_{\mu,\nu}e^{-\beta |\mu|}\phi(p_{s(\mu)})=\delta_{\mu,\nu}e^{-\beta |\mu|}\psi(p_{s(\mu)})=\psi(s_\mu s_\nu^*),
\]
and it follows from linearity and continuity that $\phi=\psi$ on $I_H$.

For \eqref{extendb}, we repeat the induction argument of the first paragraph, and in particular the computation in the first line of \eqref{extendcomp}.
\end{proof}

\begin{proof}[Proof of Proposition~\ref{old2.5c}]
As in the proof of Proposition~\ref{old2.5}, $\bar q_H^*:\psi\mapsto \psi\circ \bar q_H$ is an affine injection of the KMS$_\beta$ simplex of $C^*(E\backslash H)$ into the KMS$_\beta$ simplex of $C^*(E)$. Suppose that $\phi$ is a KMS$_\beta$ state of $C^*(E)$. Then applying Proposition~\ref{old2.5} to the hereditary closure $H_0$ of $\{C\in H/\!\!\sim:\beta<\ln\rho(A_C)\}$ shows that $\phi(\bar p_v)=\phi\circ \pi_E(p_v)=0$ for $v\in H_0$. Thus $\{v \in E^0 : \psi(\bar p_v) = 0\}$ contains $H_0$, and hence by Lemma~\ref{extend=} contains $\Sigma H_0=H$. Now \cite[Lemma~2.2]{aHLRS1} implies that $\phi$ factors through a state of $C^*(E)/I_H$, and hence there is a state $\psi$ of $C^*(E\backslash H)$ such that $\phi=\psi\circ \bar q_H$. Then the surjectivity of $\bar q_H$ implies that $\psi$ is a KMS$_\beta$ state, and $\bar q_H^*(\psi)=\phi$.
\end{proof}

\section{KMS states on Toeplitz algebras}\label{sec:ToeplitzKMS}

We suppose that $E$ has at least one cycle, so that $\rho(A)\geq 1$ (by Lemma~A.1 in \cite{aHLRS1}), and the critical inverse temperature $\ln \rho(A)\geq 0$. Since a Seneta decomposition of $A$ is upper triangular as a block matrix, we have
\[
\rho(A)=\max\{\rho(A_C):C\in E^0/\!\!\sim\text{ is a nontrivial strongly connected component}\}.
\]
We therefore focus on the set 
\begin{equation}\label{rightevalue}
\{C\in E^0/\!\!\sim \;:\rho(A_C)=\rho(A)\}.
\end{equation}
of \emph{critical components} of $E$, and in particular on the set $\mc=\mc(E)$ of \emph{minimal critical components} that are minimal in the induced partial order on the set \eqref{rightevalue}. 

The results of the previous section imply that if $\beta<\ln\rho(A)$, then every KMS$_\beta$ state on $\T C^*(E)$ vanishes on the hereditary closure of $\{C:\ln\rho(A_C)=\ln\rho(A)\}$. This hereditary closure is the same as that of $\mc(E)$. So the location of the minimal critical components in the graph plays an important role in our analysis. Because the minimal critical components are minimal in \eqref{rightevalue}, they cannot talk to each other. Thus in a Seneta decomposition of the vertex matrix $A$, our conventions ensure that the diagonal blocks $\{A_C:C\in \mc(E)\}$ associated to the minimal critical components appear in the decomposition above other critical components $A_D$. 

The next result is a new version of \cite[Theorem~2.1(a)]{aHLRS1}.

\begin{prop}\label{discardbottom}
Suppose $E$ has at least one cycle. Let $K=\bigcup_{C\in\mc(E)} C$, let $H$ be the hereditary closure of $K$, and let $L$ be the union of the nontrivial strongly connected components. Let $\beta\in \RR$. Then
\begin{enumerate}
\item\label{elempropa} $\rho(A_{E^0\backslash H})<\rho(A)$;
\item\label{elempropb} if $\phi$ is a KMS$_{\ln\rho(A)}$ state of $(\T C^*(E),\alpha)$, then $\phi(p_v)=0$ for $v\in H\backslash K$;
\item\label{elempropc} if $E^0$ is the hereditary closure of $K$ and $\phi$ is a KMS$_\beta$ state of $(\T C^*(E),\alpha)$, then $\ln\rho(A)\leq \beta$;
\item\label{elempropd} if $E^0$ is the hereditary closure of $L$ and $\phi$ is a KMS$_\beta$ state of $(\T C^*(E),\alpha)$, then there is a nontrivial component $C$ with $\ln\rho(A_C)\leq \beta$;
\item\label{elemprope} if $E^0$ is the saturated hereditary closure of $L$ and $\phi$ is a KMS$_\beta$ state of $(C^*(E),\alpha)$, then there is a nontrivial component $C$ with $\ln\rho(A_C)\leq \beta$.
\end{enumerate}
\end{prop}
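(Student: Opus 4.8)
My plan is to run all five parts through a single mechanism: the subinvariance relation $Am^\phi\le e^\beta m^\phi$ for the vector $m^\phi=(\phi(p_v))_{v\in E^0}$ (\cite[Proposition~2.1(c)]{aHLRS1}), combined with the Perron--Frobenius subinvariance theorem \cite[Theorem~1.6]{Seneta}, exactly as in Lemma~\ref{Lemfactor}. The point is that whenever a component $C$ satisfies $e^\beta<\rho(A_C)$, the localised inequality \eqref{locsubinv} forces $m^\phi|_C=0$, and then the estimate \eqref{estphipv} propagates these vanishings forward along every path leaving $C$. For \eqref{elempropa} I would first note that, since $H$ is hereditary, every strongly connected component of $E$ lies entirely in $H$ or entirely in $E^0\setminus H$, and that no internal path of a component $C\subseteq E^0\setminus H$ can enter $H$ (it would otherwise terminate in $H$). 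Hence the components of the induced subgraph are exactly the $C\subseteq E^0\setminus H$, so $A_{E^0\setminus H}$ is block upper-triangular with diagonal blocks the corresponding $A_C$ and $\rho(A_{E^0\setminus H})=\max\{\rho(A_C):C\subseteq E^0\setminus H\}$. Finally every critical component $D$ dominates some $C_0\in\mc(E)\subseteq K\subseteq H$ in the finite poset of components, so $D\subseteq H$; thus each $C\subseteq E^0\setminus H$ has $\rho(A_C)<\rho(A)$, and the finite maximum is strictly below $\rho(A)$.

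Part \eqref{elempropb} is then immediate from Lemma~\ref{Lemfactor} applied with $\Cc_1=\mc(E)$ and $\beta=\ln\rho(A)$: each $C\in\mc(E)$ has $\ln\rho(A_C)=\beta$, so $\bigcup\{C\in\Cc_1:\beta=\ln\rho(A_C)\}=K$ and the lemma yields $\phi(p_v)=0$ for $v\in H\setminus K$. Parts \eqref{elempropc} and \eqref{elempropd} I would prove by contradiction with the same engine. For \eqref{elempropc}, if $\beta<\ln\rho(A)$ then each $C\in\mc(E)$ has $e^\beta<\rho(A_C)=\rho(A)$, so $m^\phi|_C=0$; thus $\phi$ vanishes on $K$, \eqref{estphipv} spreads this to the hereditary closure $H=E^0$, and $\phi(1)=\sum_v\phi(p_v)=0$ contradicts that $\phi$ is a state. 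For \eqref{elempropd}, if no nontrivial component $C$ had $\ln\rho(A_C)\le\beta$ then $e^\beta<\rho(A_C)$ for all of them, so $m^\phi$ vanishes on every nontrivial component, hence on $L$; \eqref{estphipv} pushes this out to the hereditary closure $E^0$, and again $\phi(1)=0$ gives the contradiction.

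Part \eqref{elemprope} is the analogue of \eqref{elempropd} for $C^*(E)$, and this is where I expect the real work to be. Pulling $\phi$ back through the quotient map $\pi_E$ produces a KMS$_\beta$ state of $\T C^*(E)$, so the same Perron--Frobenius argument forces $\phi(\bar p_v)=0$ for $v\in L$ and, via \eqref{estphipv}, for every $v$ in the hereditary closure $H_L$ of $L$. The obstacle is that $H_L$ need not exhaust $E^0$: the hypothesis only guarantees that its \emph{saturation} does. To bridge this gap I would invoke Lemma~\ref{extend=}\eqref{extendb}, whose inductive use of the Cuntz--Krieger relations $\bar p_v=\sum_{r(e)=v}\bar s_e\bar s_e^*$ carries the vanishing from $H_L$ out to the full saturation $\Sigma H_L=E^0$. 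This is precisely the step available in $C^*(E)$ but not in $\T C^*(E)$, and it is the reason \eqref{elemprope} requires the \emph{saturated} hereditary closure while \eqref{elempropd} needs only the hereditary closure. Once $\phi(\bar p_v)=0$ for all $v$, the relation $\sum_v\bar p_v=1$ gives $\phi(1)=0$, the desired contradiction. I would only pause over the edge case $\beta<0$, where the conclusion amounts to the nonexistence of KMS$_\beta$ states: the saturation step in \eqref{extendcomp} only uses that $\phi$ already vanishes on each source set $s(vE^1)$, so it goes through for every real $\beta$ and delivers the same contradiction.
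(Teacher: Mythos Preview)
Your proof is correct and follows essentially the same route as the paper's: part~\eqref{elempropa} is argued identically, part~\eqref{elempropb} is the same application of Lemma~\ref{Lemfactor}, and for \eqref{elempropc}--\eqref{elemprope} you run the subinvariance/Perron--Frobenius/propagation mechanism directly, whereas the paper packages the same mechanism into citations of Proposition~\ref{old2.5} and Proposition~\ref{old2.5c} (whose proofs are precisely the argument you give, including the appeal to Lemma~\ref{extend=}\eqref{extendb} for the saturation step in~\eqref{elemprope}). There is no substantive difference beyond this packaging.
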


\begin{proof}
Since every minimal element of \eqref{rightevalue} is contained in $H$, so is every other strongly connected component $C$ in \eqref{rightevalue}. Thus $\rho(A_C)<\rho(A)$ for every strongly connected component $C$ that is contained in $E^0\backslash H$, and 
\[
\rho(A_{E^0\backslash H})= \max\{\rho(A_C):C\in E^0/\!\!\sim,\ C\subset E^0\backslash H\}<\rho(A),
\]
which is \eqref{elempropa}.  Next suppose that $\phi$ is a KMS$_{\ln\rho(A)}$ state on $(\T C^*(E),\alpha)$. We set things up so $\mc(E)=\{C\in \mc(E):\rho(A_C)=\rho(A)\}$, so we can apply Lemma~\ref{Lemfactor} with $\beta=\ln\rho(A)$ and $\Cc_1=\mc(E)$, and \eqref{elempropb} follows. 

For \eqref{elempropc}, we suppose that $\ln\rho(A)>\beta$. Then $\mc(E)\subset\{C\in E^0/\!\!\sim: \ln\rho(A_C)>\beta\}$, and hence the hypothesis implies that $\{C: \ln\rho(A_C)>\beta\}$ generates $E^0$. So Proposition~\ref{old2.5} applies with $H=E^0$. Thus $\phi(p_v)=0$ for all $v\in E^0$, and $1=\phi(1)=\sum_{v\in E^0}\phi(p_v)=0$, which is a contradiction. A similar argument gives \eqref{elempropd}. For \eqref{elemprope}, we repeat the argument yet again, using Proposition~\ref{old2.5c} instead of Proposition~\ref{old2.5}.
\end{proof}

\begin{rem}
If the hereditary closure $G$ of $L$ is not all of $E^0$, then $\rho(A\backslash G)=0$, and Theorem~3.1 of \cite{aHLRS1} applies to $E\backslash G$ and every $\beta\in \RR$. Thus if $\beta<\ln\rho(A_C)$ for every nontrivial component $C$, there is a $(|E^0\backslash G|-1)$-dimensional simplex of KMS$_\beta$ states on $\T C^*(E\backslash G)$. It follows from Proposition~\ref{old2.5} that there is also a $(|E^0\backslash G|-1)$-dimensional simplex of KMS$_\beta$ states on $\T C^*(E)$. Whether any of these factor through $C^*(E)$ will depend on whether $E\backslash \Sigma G$ has sources (see \cite[Corollary~6.1]{aHLRS1}), and Example~\ref{EminusGsourced} shows that $E\backslash \Sigma G$ can have sources.
\end{rem}

Proposition~\ref{discardbottom} implies that the KMS$_{\ln \rho(A)}$ simplex does not see the set $H\backslash K$, and hence (via \cite[Lemma~2.2]{aHLRS1}) that the KMS$_{\ln \rho(A)}$ states vanish on the ideal $J_{H\backslash K}$ generated by $\{p_v:v\in H\backslash K\}$.  Our next result describes how the minimal critical components give rise to KMS$_{\ln \rho(A)}$ states.

\begin{thm}\label{KMScrit}
Suppose that $E$ is a directed graph with at least one cycle. Let $K=\bigcup_{C\in \mc(E)}C$, and let $H:=\{v\in E^0:KE^*v\not=\emptyset\}$ be the hereditary closure of $K$. 
\begin{enumerate}
\item\label{crita} Let $C\in \mc(E)$ be a minimal critical component, and let $x^C$ be the unimodular Perron-Frobenius eigenvector of $A_C$ (that is, the one with $\|x^C\|_1=1$). Define a vector $z^C\in [0,\infty)^{E^0\backslash H}$ by 
\begin{equation}\label{defz}
z^C:=\rho(A)^{-1}(1-\rho(A)^{-1}A_{E^0\backslash H})^{-1}A_{E^0\backslash H,\,C}x^C.
\end{equation}
Then there is a KMS$_{\ln\rho(A)}$ state $\psi_C$ of $(\T C^*(E),\alpha)$ such that
\begin{equation}\label{formphiC}
\psi_C(s_\mu s_\nu^*)=
\delta_{\mu,\nu}\rho(A)^{-|\mu|}(1+\|z^C\|_1)^{-1}\begin{cases}
z^C_{s(\mu)}&\text{if $s(\mu)\in E^0\backslash H$}\\
x^C_{s(\mu)}&\text{if $s(\mu)\in C$}\\
0&\text{if $s(\mu)\in H\backslash C$.}
\end{cases}
\end{equation}
The state $\psi_C$ factors through a KMS$_{\ln\rho(A)}$ state $\bar\psi_C$ of $(C^*(E),\alpha)$.

\item\label{critb} The map $t\mapsto \sum_{C\in \mc(E)}t_C\psi_C$ is an affine isomorphism of 
\[
S_E:=\Big\{t\in[0,1]^{\mc(E)}:\sum_{C\in \mc(E)} t_C=1\Big\} 
\]
onto a simplex $\Sigma_{\mc(E)}$ of KMS$_{\ln\rho(A)}$ states of $(\T C^*(E),\alpha)$. Every KMS$_{\ln\rho(A)}$ state of $(\T C^*(E),\alpha)$ is a convex combination of a state of the form $q_H^*(\phi)=\phi\circ q_H$ and a state in $\Sigma_{\mc(E)}$.
\end{enumerate}
\end{thm}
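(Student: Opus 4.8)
The plan is to build the two families of states by exhibiting the right nonnegative vectors on $E^0$, and then to show that every KMS$_{\ln\rho(A)}$ state splits as a convex combination by peeling off its mass on the minimal critical components. For part~\eqref{crita} I would first record the block structure from a Seneta decomposition: ordering $E^0$ as $(E^0\backslash H)\sqcup C\sqcup(H\backslash C)$, heredity of $H$ kills the block below $E^0\backslash H$, and minimality of $C$ (it is incomparable to the other minimal critical components, and nothing in $H\backslash C$ flows into it) kills the block below $C$, so $A$ is block upper-triangular with diagonal blocks $A_{E^0\backslash H}$, $A_C$, $A_{H\backslash C}$. Assembling $m^C:=(z^C,x^C,0)$ in these coordinates, a direct block computation gives $Am^C=\rho(A)m^C$: the $C$-rows give $A_Cx^C=\rho(A_C)x^C=\rho(A)x^C$ since $C$ is critical; the $(E^0\backslash H)$-rows give $A_{E^0\backslash H}z^C+A_{E^0\backslash H,C}x^C=\rho(A)z^C$, which is exactly \eqref{defz} rearranged (and $z^C$ is well defined, with nonnegative entries from the Neumann expansion of $(1-\rho(A)^{-1}A_{E^0\backslash H})^{-1}$, because $\rho(A_{E^0\backslash H})<\rho(A)$ by Proposition~\ref{discardbottom}\eqref{elempropa}); the $(H\backslash C)$-rows vanish on both sides. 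Normalising $m^C$ and invoking the correspondence between subinvariant probability vectors and KMS$_{\ln\rho(A)}$ states from \cite[Proposition~2.1]{aHLRS1} produces $\psi_C$, with \eqref{workhorse} yielding \eqref{formphiC}; since the subinvariance is the equality $Am^C=\rho(A)m^C$, the criterion of \cite[Corollary~6.1]{aHLRS1} shows $\psi_C$ factors through $C^*(E)$. For the first assertion of \eqref{critb} the map is affine, so it suffices to check injectivity: for $v\in C'\in\mc(E)$ we have $\psi_C(p_v)=0$ when $C\neq C'$ and $\psi_{C'}(p_v)=(1+\|z^{C'}\|_1)^{-1}x^{C'}_v>0$, so $(\sum_C t_C\psi_C)(p_v)=t_{C'}(1+\|z^{C'}\|_1)^{-1}x^{C'}_v$ recovers each $t_{C'}$; hence the map is an affine isomorphism onto a simplex $\Sigma_{\mc(E)}$.

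The substance is the final assertion. Given a KMS$_{\ln\rho(A)}$ state $\phi$, put $m:=(\phi(p_v))$, so $Am\le\rho(A)m$ and, by Proposition~\ref{discardbottom}\eqref{elempropb}, $m_v=0$ for $v\in H\backslash K$. The first key step is a Perron--Frobenius rigidity argument on each $C\in\mc(E)$: because $C$ is incomparable to the other minimal critical components and $m$ vanishes on $H\backslash K$, the off-diagonal contributions drop out and $(Am)|_C=A_C(m|_C)$, so $A_C(m|_C)\le\rho(A_C)(m|_C)$; by the last sentence of \cite[Theorem~1.6]{Seneta} this forces $m|_C=\lambda_C x^C$ for some $\lambda_C\ge0$. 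I would then set $\mu_C:=\lambda_C(1+\|z^C\|_1)$ and $\sigma:=\sum_{C\in\mc(E)}\mu_C\psi_C$, chosen precisely so that $\phi$ and $\sigma$ agree on every $p_v$ with $v\in H$ (on $C$ by the choice of $\mu_C$, and both vanish on $H\backslash K$).

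It then remains to realise $\phi-\sigma$ as a nonnegative multiple of a state factoring through $q_H$. Writing $n:=m|_{E^0\backslash H}-\sum_C\lambda_C z^C$, the functional $\phi-\sigma$ agrees on every spanning element, via \eqref{workhorse}, with the vertex vector $n$ (extended by $0$ on $H$), since $\phi$ and $\sigma$ already agree on $H$. Subtracting $\sum_C\lambda_C$ times the relation $A_{E^0\backslash H,C}x^C=\rho(A)z^C-A_{E^0\backslash H}z^C$ from the $(E^0\backslash H)$-rows of $Am\le\rho(A)m$ gives $A_{E^0\backslash H}n\le\rho(A)n$; and because $\rho(A_{E^0\backslash H})<\rho(A)$, the matrix $\rho(A)I-A_{E^0\backslash H}$ has entrywise nonnegative inverse, whence $n=(\rho(A)I-A_{E^0\backslash H})^{-1}\big[(\rho(A)I-A_{E^0\backslash H})n\big]\ge0$. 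Thus $\|n\|_1^{-1}n$ is a subinvariant probability vector for $E\backslash H$ and defines a KMS$_{\ln\rho(A)}$ state $\phi'$ of $\T C^*(E\backslash H)$; comparing on spanning elements via \eqref{workhorse} (and using Proposition~\ref{quotmapH}) shows $\phi-\sigma=\|n\|_1\,q_H^*(\phi')$, which is manifestly positive. Finally, with $a:=\|n\|_1$, $b:=\sum_C\mu_C$ and $\sigma_0:=b^{-1}\sigma$, the bookkeeping identity $a+b=\sum_{v\in E^0\backslash H}m_v+\sum_{v\in K}m_v=\sum_v m_v=1$ gives $\phi=a\,q_H^*(\phi')+b\,\sigma_0$, the desired convex combination; the degenerate cases $a=0$ and $b=0$ are immediate.

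The main obstacle is the positivity of the remainder, i.e.\ that $n\ge0$. Everything hinges on two rigidity inputs combining: the Perron--Frobenius dichotomy forces $m|_C$ onto the eigenray and thereby fixes exactly how much mass each critical component contributes, while the strict inequality $\rho(A_{E^0\backslash H})<\rho(A)$ makes $\rho(A)I-A_{E^0\backslash H}$ invertible with nonnegative inverse. The delicate, computation-heavy part will be getting the block decomposition and the vanishing of the relevant off-diagonal blocks exactly right, so that the cross terms cancel and leave the clean relation $A_{E^0\backslash H}n\le\rho(A)n$ on which the positivity of $n$ depends.
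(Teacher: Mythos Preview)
Your argument is correct, and for part~\eqref{crita} and the injectivity in \eqref{critb} it matches the paper's proof essentially line for line (one quibble: \cite[Proposition~2.1]{aHLRS1} only gives necessary conditions on the vertex vector; existence of the state from an invariant probability vector comes from \cite[Proposition~4.1]{aHLRS1}, which also handles the factoring through $C^*(E)$ in one stroke).

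The genuine difference is in how you establish the nonnegativity of the remainder $n=m|_{E^0\backslash H}-\sum_C\lambda_Cz^C$. The paper introduces the ``quick exit'' paths $\QE(C)$, proves a lemma that the corresponding range projections $\{s_\lambda s_\lambda^*:\lambda\in\QE(K)\}$ are mutually orthogonal, and then uses this orthogonality together with the series expansion~\eqref{eq:zCseries} to bound $\phi(p_v)\ge\sum_C t_C(1+\|z^C\|_1)^{-1}z^C_v$ directly from the state, which is exactly $n_v\ge0$. Your route is purely matrix-theoretic: you first derive the subinvariance $A_{E^0\backslash H}n\le\rho(A)n$ by subtracting the eigenvector relations from the $(E^0\backslash H)$-rows of $Am\le\rho(A)m$, and then observe that $(\rho(A)I-A_{E^0\backslash H})^{-1}\ge0$ (from the convergent Neumann series, using $\rho(A_{E^0\backslash H})<\rho(A)$) turns $(\rho(A)I-A_{E^0\backslash H})n\ge0$ into $n\ge0$. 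This is shorter and avoids Lemma~\ref{lemonQE} entirely; the paper's path-counting argument, on the other hand, makes the operator-algebraic origin of the inequality visible and yields the explicit combinatorial interpretation of $z^C$ that is reused later in Theorem~\ref{thm:altogether}\eqref{it:critstates}.
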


The idea in part~\eqref{crita} is that the values of a KMS state on vertices in $C$
contribute to the values $\phi(p_v)$ for $v\in E^0\backslash H$ when there are paths 
$\lambda$ from $C$ to $v$. As 
discussed at the beginning of Section~3 of \cite{aHLRS1}, for $\beta > \ln\rho(A_{E^0 \backslash H})$ the series $\sum^\infty_{n=0} e^{-\beta n} A_{E^0 \backslash H}^n$ converges  in operator norm to $(1 - e^{-\beta} A_{E^0 \backslash H})^{-1}$, and so
\begin{equation}\label{eq:inverse series}
(1 - e^{-\beta} A_{E^0 \backslash H})^{-1}(v,w)
= \sum^\infty_{n=0} e^{-\beta n} A_{E^0 \backslash H}^n(v,w)
= \sum_{\lambda \in v E^* w} e^{-\beta |\lambda|}.
\end{equation}
Since $\rho(A) > \rho(A_{E^0 
\backslash H})$,  the $(E^0\backslash H)\times C$ matrix 
$(1-\rho(A)^{-1}A_{E^0\backslash H})^{-1}A_{E^0\backslash H,\,C}$ in~\eqref{defz} has entries
\begin{equation}\label{eq:zCseries}
\big((1-\rho(A)^{-1}A_{E^0\backslash H})^{-1}A_{E^0\backslash H,\,C}\big)(v,w)
	= \sum_{e \in (E^0 \backslash H) E^1 w}\; \sum_{\mu \in v E^* r(e)} \rho(A)^{-|\mu|}.
\end{equation}
We use \eqref{eq:inverse series} in the proof of part~(\ref{crita}) 
and~\eqref{eq:zCseries} in the proof of part~(\ref{critb}), and  again in Lemma~\ref{factorthruC*} and Theorem~\ref{thm:altogether}(\ref{it:critstates}).

\begin{proof}[Proof of Theorem~\ref{KMScrit}\,\eqref{crita}]
We partition $E^0$ as $(E^0\backslash H)\cup C\cup (H\backslash C)$, and claim that the vector $(z^C,x^C,0)$ satisfies
\begin{equation}\label{eigenvectorforA}
A(z^C,x^C,0)=\rho(A)(z^C,x^C,0).
\end{equation}
Since $C$ is minimal, it does not talk to any of the other components in $H\backslash C$, and we have
\begin{equation}\label{decompA}
A(z^C,x^C,0)=(A_{E^0\backslash H}z^C+A_{E^0\backslash H,C}x^C,A_Cx^C,0).
\end{equation}
We know that $A_Cx^C=\rho(A)x^C$, so we concentrate on the first term. 
Proposition~\ref{discardbottom} implies that $\rho(A_{E^0\backslash H})<\rho(A)$. 
Since $e^{-\rho(A)}=\rho(A)^{-1}$, ~\eqref{eq:inverse series} gives
\[
z^C=\sum_{n=0}^\infty\rho(A)^{-n-1}A_{E^0\backslash H}^nA_{E^0\backslash H,\,C}x^C,
\] 
and we have
\begin{align*}
A_{E^0\backslash H}z^C+A_{E^0\backslash H,C}x^C
&=A_{E^0\backslash H}\Big(\sum_{n=0}^\infty \rho(A)^{-n-1}A_{E^0\backslash H}^nA_{E^0\backslash H,C}x^C\Big)+A_{E^0\backslash H,C}x^C\\
&=\Big(\sum_{m=1}^\infty \rho(A)^{-m}A_{E^0\backslash H}^mA_{E^0\backslash H,C}x^C\Big)+A_{E^0\backslash H,C}x^C\\
&=\sum_{m=0}^\infty \rho(A)^{-m}A_{E^0\backslash H}^mA_{E^0\backslash H,C}x^C\\
&=\rho(A)z^C.
\end{align*}
From this and \eqref{decompA}, we deduce that $(z^C,x^C,0)$ satisfies \eqref{eigenvectorforA}, as claimed.

Since $x^C$ is unimodular, $m:=(1+\|z^C\|_1)^{-1}(z^C,x^C,0)$ satisfies $\|m\|_1=1$, and hence is a probability measure on $E^0$. Equation~\eqref{eigenvectorforA} implies that $Am=\rho(A)m$. Thus Proposition~4.1 of \cite{aHLRS1} implies that there is a KMS$_{\ln\rho(A)}$ state $\psi_C$ on $(\T C^*(E),\alpha)$ satisfying \eqref{formphiC},
and that $\psi_C$ factors through a KMS$_{\ln\rho(A)}$ state of $(C^*(E),\alpha)$.
\end{proof}

The double sum appearing on the right-hand side of~\eqref{eq:zCseries} is parametrised by paths in $vE^*w$ 
of the form $\mu e$, where $r(e)$ is in $E^0\backslash C$ and $\mu$ is a path in 
$E\backslash H$. We say that such paths \emph{make a quick exit from $C$}. For a 
minimal critical component $C$, we write $\QE(C)$ for the set
\[
\QE(C) := \{\mu e : e \in E^1 C, r(e) \not\in C, \mu \in E^* r(e)\}
\]
of paths which start in $C$ and make a quick exit from $C$, and $\QE(K):=\bigcup_{C\in 
\mc(E)} \QE(C)$. With this notation, the right-hand side of~\eqref{eq:zCseries} becomes 
\[
\sum_{\lambda \in v \QE(C)w} \rho(A)^{-(|\lambda|-1)}.
\]

\begin{lem}\label{lemonQE}
The projections $\{s_\lambda s_\lambda^*:\lambda\in \QE(K)\}$ are mutually orthogonal.
\end{lem}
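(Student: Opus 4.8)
The plan is to reduce this operator-algebraic statement to a purely combinatorial fact about the paths in $\QE(K)$. Each $s_\lambda$ is a partial isometry with $s_\lambda^*s_\lambda=p_{s(\lambda)}$, so $s_\lambda s_\lambda^*$ is indeed a projection. For $\lambda=\lambda_1\cdots\lambda_m$ and $\lambda'=\lambda'_1\cdots\lambda'_n$, the standard telescoping computation, using the relation $s_e^*s_f=\delta_{e,f}p_{s(e)}$ (itself immediate from the Toeplitz--Cuntz--Krieger relations), shows that $s_\lambda^*s_{\lambda'}=0$ unless $\lambda_i=\lambda'_i$ for every $i\leq\min(m,n)$; that is, unless one of $\lambda,\lambda'$ is an initial segment of the other when read from the range end. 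Since $s_\lambda s_\lambda^*s_{\lambda'}s_{\lambda'}^*=s_\lambda(s_\lambda^*s_{\lambda'})s_{\lambda'}^*$, it therefore suffices to prove that for distinct $\lambda,\lambda'\in\QE(K)$ neither is an initial segment of the other.

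The key is a structural property of quick-exit paths. If $\lambda=\mu e\in\QE(K)$, with $e$ its final (source-most) edge and $C\in\mc(E)$ the minimal critical component with $s(e)\in C$, then $e$ is the \emph{unique} edge of $\lambda$ whose source lies in $H$. Indeed $s(e)\in C\subseteq K\subseteq H$, whereas every other edge of $\lambda$ is an edge of $\mu$, and $\mu$ is a path in $E\backslash H$, so both of its endpoints, in particular its source, lie in $E^0\backslash H$. Thus among the edges of $\lambda$ listed from the range, exactly the last one has its source in $H$; equivalently, $\lambda$ meets $H$ only at its source vertex $s(\lambda)\in K$.

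Now suppose, for contradiction, that $\lambda,\lambda'\in\QE(K)$ are distinct but $s_\lambda s_\lambda^*s_{\lambda'}s_{\lambda'}^*\neq0$. By the first paragraph one of them, say $\lambda'$, properly extends the other: $\lambda'=\lambda\lambda''$ with $|\lambda''|\geq1$ and $\lambda'_i=\lambda_i$ for $i\leq|\lambda|$. The source-most edge $e$ of $\lambda$ then occurs as the $|\lambda|$-th edge of $\lambda'$, and since $|\lambda|<|\lambda'|$ it is not the source-most edge of $\lambda'$. But $s(e)\in H$, so $\lambda'$ has an edge other than its last one whose source lies in $H$, contradicting the observation of the previous paragraph applied to $\lambda'$. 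Hence no quick-exit path properly extends another, so $s_\lambda^*s_{\lambda'}=0$ for distinct $\lambda,\lambda'\in\QE(K)$ and the projections $\{s_\lambda s_\lambda^*:\lambda\in\QE(K)\}$ are mutually orthogonal.

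The main point to get right is the interplay between the two ends of a path: the ``quick exit'' is a feature of the \emph{source} end of each $\lambda$, while the initial-segment (non-orthogonality) condition concerns the \emph{range} end. Extending $\lambda$ to $\lambda'=\lambda\lambda''$ appends edges at the source end and therefore buries the exit edge of $\lambda$ in the interior of $\lambda'$, where the condition that $\mu'$ be a path in $E\backslash H$ forbids an edge with source in $H$. Establishing the structural observation cleanly, and in particular reading off from the definition of $\QE(K)$ that every vertex of $\lambda$ other than its source lies in $E^0\backslash H$, is the crux; once that is in hand the contradiction is immediate, and notably one does not even need to invoke the fact that distinct minimal critical components cannot talk to each other.
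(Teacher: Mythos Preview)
Your proof is correct and follows the same strategy as the paper's: reduce to showing that no quick-exit path properly extends another, then locate the exit vertex to derive a contradiction. The paper phrases this in terms of $K$ rather than $H$, comparing $s(\nu)\in K$ with $s(\mu_{|\nu|})\notin K$, but the idea is identical.

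One correction to your closing commentary, however. The formal definition of $\QE(C)$ in the paper requires only $e\in E^1C$, $r(e)\notin C$, and $\mu\in E^*r(e)$; it does \emph{not} say that $\mu$ lies in $E\backslash H$. Your structural observation that every vertex of $\lambda$ other than $s(\lambda)$ lies in $E^0\backslash H$ therefore needs the step $r(e)\notin H$, and that step is exactly where the fact that distinct minimal critical components do not talk to each other enters: if $r(e)\in H$ then some $C'\in\mc(E)$ has $C'E^*r(e)\neq\emptyset$, whence $C'\leq C$ via $e$; minimality forces $C'=C$, and then $r(e)\in C$, a contradiction. So contrary to your final remark, the fact \emph{is} used---you have simply absorbed it into the structural observation rather than invoking it at the comparison step as the paper does.
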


\begin{proof}
Suppose that $\mu,\nu\in \QE(K)$
and $\mu\not=\nu$. If $|\mu|=|\nu|$, then $(s_\mu s_\mu^*)(s_\nu s_\nu^*)=s_\mu(s_\mu^*s_\nu)s_\nu^*=0$. So suppose that one path is longer, say $|\mu|>|\nu|$. Then $s(\nu)$ is in $K$ and $s(\mu_{|\nu|})$ is not in $K$ because the different minimal critical components do not talk to each other. Thus $\mu$ does not have the form $\nu\mu'$, and we have $s_\mu^*s_\nu=0$, which implies that $(s_\mu s_\mu^*)(s_\nu s_\nu^*)=0$.
\end{proof}

\begin{proof}[Proof of Theorem~\ref{KMScrit}\,\eqref{critb}]
Suppose that $\phi$ is a KMS$_{\ln\rho(A)}$ state of $(\T C^*(E),\alpha)$, and consider $m^\phi=(\phi(p_v))$, which by \cite[Proposition~2.1(c)]{aHLRS1} satisfies the subinvariance relation $Am^\phi\leq \rho(A)m^\phi$. Suppose that $C\in \mc=\mc(E)$. Proposition~\ref{discardbottom} implies that $m^\phi_v=0$ for $v\in H\backslash K$, which since the minimal critical components do not talk to each other implies that $(Am^\phi)|_C=A_C(m^\phi|_C)$. So subinvariance implies that 
\[
A_C(m^\phi|_C)=(Am^\phi)|_C\leq \rho(A)m^\phi|_C=\rho(A_C)m^\phi|_C;
\]
now \cite[Theorem~1.6]{Seneta} implies that we have equality throughout, and that $m^\phi|_C$ is a multiple of the unimodular Perron-Frobenius eigenvector $x^C$ for $A_C$. We define $t_C\in [0,\infty)$ by $m^\phi|_C=t_C(1+\|z^C\|_1)^{-1}x^C$. 

We claim that $\sum_{C\in \mc}t_C\leq 1$. For $v\in E^0\backslash H$,  
Lemma~\ref{lemonQE} implies that $\phi(p_v)\geq \sum_{\lambda\in v\QE(K)}\phi(s_\lambda 
s_\lambda^*)$. Now we calculate, using \cite[Proposition~2.1(a)]{aHLRS1} 
and~\eqref{eq:zCseries}: 
\begin{align}\label{lbforphip}
\phi(p_v)&\geq \sum_{\lambda\in v\QE(K)}\phi(s_\lambda s_\lambda^*)=\sum_{\lambda\in v\QE(K)}\rho(A)^{-|\lambda|}\phi(p_{s(\lambda)})\\
&=\sum_{C\in \mc}t_C(1+\|z^C\|_1)^{-1}\Big(\sum_{\lambda\in v\QE(C)}\rho(A)^{-|\lambda|}x^C_{s(\lambda)}\Big)\notag\\
&=\sum_{C\in \mc}t_C(1+\|z^C\|_1)^{-1}\Big(\sum_{w\in C}\;\sum_{\lambda\in v\QE(C)w}\rho(A)^{-|\lambda|}x^C_w\Big)\notag\\
&=\sum_{C\in \mc}t_C(1+\|z^C\|_1)^{-1}\Big(\sum_{w\in 
C}\rho(A)^{-1}\big((1-\rho(A)^{-1}A_{E^0\backslash H})^{-1}A_{E^0\backslash 
H,\,C})\big)(v,w)x^C_w\Big)\notag\\
&=\sum_{C\in \mc}t_C(1+\|z^C\|_1)^{-1}z^C_v.\notag
\end{align}
For $v\in C$ we have $\phi(p_v)=t_C(1+\|z^C\|_1)^{-1}x^C_v$ by definition of $t_C$. Thus
\begin{align}\label{subinvofdiff}
1=\phi(1)&=\sum_{v\in E^0}\phi(p_v)\geq\sum_{v\in E^0\backslash H}\phi(p_v)+\sum_{C\in \mc}\sum_{v\in C}\phi(p_v)\\
&\geq \sum_{v\in E^0\backslash H}\sum_{C\in \mc}t_C(1+\|z^C\|_1)^{-1}z^C_v+\sum_{C\in \mc}\sum_{v\in C}t_C(1+\|z^C\|_1)^{-1}x^C_v\notag\\
&=\sum_{C\in \mc}t_C(1+\|z^C\|_1)^{-1}\|z^C\|_1+\sum_{C\in \mc}t_C(1+\|z^C\|_1)^{-1}\notag\\
&=\sum_{C\in \mc}t_C,\notag
\end{align} 
as claimed.

The states $\psi_C$ in part~\eqref{crita} are KMS$_{\ln\rho(A)}$ states with $m^{\psi_C}=(1+\|z^C\|_1)^{-1}(z^C,x^C,0)$, and hence \eqref{eigenvectorforA} says that $Am^{\psi_C}=\rho(A)m^{\psi_C}$. We know from \cite[Proposition~2.1(c)]{aHLRS1} that $m^\phi$ is a probability measure with $Am^\phi\leq \rho(A)m^\phi$. Thus 
\begin{align}
A\Big(m^\phi-&\sum_{C\in\mc}t_Cm^{\psi_C}\Big)=Am^\phi-\sum_{C\in \mc} t_CAm^{\psi_C}
=Am^\phi-\sum_{C\in \mc} t_C\rho(A)m^{\psi_C}\label{msubinv}\\
&\leq \rho(A)m^\phi-\sum_{C\in \mc} t_C\rho(A)m^{\psi_C}
=\rho(A)\Big(m^\phi-\sum_{C\in\mc}t_Cm^{\psi_C}\Big).\notag
\end{align}
For $v\in E^0\backslash H$, we saw in \eqref{lbforphip} that
\[
m^\phi_v=\phi(p_v)\geq\sum_{C\in \mc}t_C(1+\|z^C\|_1)^{-1}z^C_v=\sum_{C\in \mc}t_Cm^{\psi_C}_v.
\]
For $v\in K$, say $v\in C$, we have from the definition of $t_C$ that
\[
m^\phi_v=t_C(1+\|z^C\|_1)^{-1}x^C_v=t_C\psi_C(p_v)=t_Cm^{\psi_C}_v;
\]
for $v\in H\backslash K$, Proposition~\ref{discardbottom} gives $m^\phi_v=m^{\psi_C}_v=0$ for all $C$. Thus the difference satisfies $\big(m^\phi-\sum_{C\in \mc} t_Cm^{\psi_C}\big)\big|_H=0$.

If $\sum_{C\in \mc}t_C=1$, then we have $m^\phi=\sum_C t_Cm^{\psi_C}$ because both are probability measures and $m^\phi\geq\sum_C t_Cm^{\psi_C}$. Then, since $\phi$ and $\sum_Ct_C\psi_C$ are KMS$_{\ln\rho(A)}$ states of $(\T C^*(E),\alpha)$ which agree on projections, Proposition~2.1 of \cite{aHLRS1} implies that $\phi=\sum_{C} t_C\psi_C$.

If $\sum_{C\in \mc}t_C<1$, then
\[
m:=\Big(1-\sum_{C\in\mc}t_C\Big)^{-1}\Big(m^\phi-\sum_{C\in\mc}t_Cm^{\psi_C}\Big)\Big|_{E^0\backslash H}
\]
is a probability measure, and the calculation \eqref{msubinv} implies that $m$ is subinvariant for the graph $E\backslash H$. Since $\rho(A_{E^0\backslash H})<\rho(A)$, applying \cite[Theorem~3.1]{aHLRS1} to the graph $E\backslash H$, with $\beta=\ln\rho(A)$ and $\epsilon=(1-\rho(A)^{-1}A_{E^0\backslash H})^{-1}m$, gives a KMS$_{\ln\rho(A)}$ state $\phi_\epsilon$ on $(\T C^*(E\backslash H),\alpha)$ such that $\phi_\epsilon(p_v)=m_v$ for $v\in E^0\backslash H$. Now $(1-\sum_C t_C)(\phi_\epsilon\circ q_H)+\sum_Ct_C\psi_C$ is a KMS$_{\ln\rho(A)}$ state on $(\T C^*(E),\alpha)$ which agrees with $\phi$ on vertex projections, and hence
\[
\phi=\Big(1-\sum_{C\in\mc} t_C\Big)(\phi_\epsilon\circ q_H)+\sum_Ct_C\psi_C.\qedhere
\]
\end{proof}

Since $\rho(A_{E^0\backslash H})<\rho(A)$, Theorem~3.1 of \cite{aHLRS1} describes the 
KMS$_{\ln\rho(A)}$ states on $\T C^*(E\backslash H)$. We write $y^{E\backslash H}$ for 
the vector in $[1,\infty)^{E^0\backslash H}$ described in \cite[Theorem~3.1(a)]{aHLRS1}, 
$\Delta^{E\backslash H}_{\ln\rho(A)}$ for the simplex $\{\epsilon:\epsilon\cdot 
y^{E\backslash H}=1\}$ in $[0,\infty)^{E^0\backslash H}$, and $\phi_\epsilon$ for the 
KMS$_{\ln\rho(A)}$ state on $\T C^*(E\backslash H)$ described in 
\cite[Theorem~3.1(b)]{aHLRS1}.

\begin{cor}\label{cor:state decomp}
Every KMS$_{\ln\rho(A)}$ state on $\T C^*(E)$ has the form
\begin{equation}\label{genKMScrit}
\phi_{r,\epsilon,t}:=r(\phi_\epsilon\circ q_H)+(1-r)\Big(\sum_{C\in \mc(E)}t_C\psi_C\Big) 
\end{equation}
for some $r\in [0,1]$, $\epsilon\in\Delta^{E\backslash H}_{\ln\rho(A)}$ and $t\in 
S_{\mc}$. We have $\phi_{r,\epsilon,t} = \phi_{r',\epsilon',t'}$ if and only if 
$(r\epsilon, (1-r)t) = (r'\epsilon', (1-r')t')$.
\end{cor}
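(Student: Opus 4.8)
The plan is to read off the corollary from Theorem~\ref{KMScrit}\eqref{critb} combined with the known description of the KMS states of $\T C^*(E\backslash H)$. For the existence of the decomposition~\eqref{genKMScrit}, I would take an arbitrary KMS$_{\ln\rho(A)}$ state $\Phi$ of $(\T C^*(E),\alpha)$ and apply Theorem~\ref{KMScrit}\eqref{critb}, which writes $\Phi=r(\phi\circ q_H)+(1-r)\sigma$ with $r\in[0,1]$, $\phi$ a KMS$_{\ln\rho(A)}$ state of $(\T C^*(E\backslash H),\alpha)$, and $\sigma\in\Sigma_{\mc(E)}$. Since $\rho(A_{E^0\backslash H})<\rho(A)$ by Proposition~\ref{discardbottom}\eqref{elempropa}, the value $\ln\rho(A)$ is strictly above the critical inverse temperature for $E\backslash H$, so \cite[Theorem~3.1]{aHLRS1} identifies $\phi$ with $\phi_\epsilon$ for some $\epsilon\in\Delta^{E\backslash H}_{\ln\rho(A)}$, and part~\eqref{critb} writes $\sigma=\sum_{C\in\mc(E)}t_C\psi_C$ for some $t\in S_{\mc}$. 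This exhibits $\Phi$ in the form $\phi_{r,\epsilon,t}$.

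For the equality criterion I would reduce to the vectors $m^\phi:=(\phi(p_v))_{v\in E^0}$. By~\eqref{workhorse} a KMS state is determined by its values on the $p_v$, so $\phi_{r,\epsilon,t}=\phi_{r',\epsilon',t'}$ precisely when $m^{\phi_{r,\epsilon,t}}=m^{\phi_{r',\epsilon',t'}}$. It therefore suffices to show that the assignment $(r\epsilon,(1-r)t)\mapsto m^{\phi_{r,\epsilon,t}}$ is well defined and injective. I would compute $m^{\phi_{r,\epsilon,t}}$ blockwise along $E^0=(E^0\backslash H)\sqcup K\sqcup(H\backslash K)$: the state $\phi_\epsilon\circ q_H$ vanishes on $H$ and agrees with $m^{\phi_\epsilon}$ on $E^0\backslash H$, while from~\eqref{formphiC} the vector $m^{\psi_C}$ is $0$ on $H\backslash C$, equals $(1+\|z^C\|_1)^{-1}x^C$ on $C$, and equals $(1+\|z^C\|_1)^{-1}z^C$ on $E^0\backslash H$. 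Hence $m^{\phi_{r,\epsilon,t}}$ restricts to $(1-r)t_C(1+\|z^C\|_1)^{-1}x^C$ on each $C\in\mc(E)$, to $0$ on $H\backslash K$, and to $r\,m^{\phi_\epsilon}+(1-r)\sum_{C}t_C(1+\|z^C\|_1)^{-1}z^C$ on $E^0\backslash H$.

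Well-definedness then follows because $m^{\phi_\epsilon}$ is linear in $\epsilon$, via the relation $m^{\phi_\epsilon}=(1-\rho(A)^{-1}A_{E^0\backslash H})\epsilon$ recorded in the proof of Theorem~\ref{KMScrit}\eqref{critb}: the term $r\,m^{\phi_\epsilon}=(1-\rho(A)^{-1}A_{E^0\backslash H})(r\epsilon)$ depends only on $r\epsilon$, and every remaining term depends only on the individual products $(1-r)t_C$. For injectivity I would invert the computation: since $x^C$ is the strictly positive Perron--Frobenius eigenvector of $A_C$, the restriction to any vertex of $C$ recovers $(1-r)t_C$, hence $(1-r)t$; subtracting the now-known contribution on $E^0\backslash H$ leaves $(1-\rho(A)^{-1}A_{E^0\backslash H})(r\epsilon)$, and applying $(1-\rho(A)^{-1}A_{E^0\backslash H})^{-1}$ (which exists because $\rho(A_{E^0\backslash H})<\rho(A)$) recovers $r\epsilon$.

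The only delicate point --- and what the criterion is carefully engineered to capture --- is that the state determines the products $r\epsilon$ and $(1-r)t$ but not $r$, $\epsilon$, $t$ separately: when $r=0$ the factor $\phi_\epsilon$ drops out and $\epsilon$ becomes invisible, and when $r=1$ the same happens to $t$. The disjointness of the supports of the $m^{\psi_C}$ over $K$ --- guaranteed because distinct minimal critical components do not talk to each other --- is exactly what lets the disentangling in the previous paragraph go through, and keeping track of this degeneracy is the main (if modest) obstacle.
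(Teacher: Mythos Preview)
Your argument is correct and follows essentially the same route as the paper: both proofs reduce to values on vertex projections, read off $(1-r)t_C$ from the $C$-block using strict positivity of $x^C$, and then recover $r\epsilon$ on $E^0\backslash H$ via the linear bijection between $\epsilon$ and $m^{\phi_\epsilon}$. Your packaging of the equality criterion as ``the map $(r\epsilon,(1-r)t)\mapsto m^{\phi_{r,\epsilon,t}}$ is well defined and injective'' is slightly more economical than the paper's separate ``if'' and ``only if'' arguments, but the underlying computations are the same. One small slip: the relation from \cite[Theorem~3.1]{aHLRS1} is $m^{\phi_\epsilon}=(1-\rho(A)^{-1}A_{E^0\backslash H})^{-1}\epsilon$, not $(1-\rho(A)^{-1}A_{E^0\backslash H})\epsilon$, so to recover $r\epsilon$ you should multiply by $(1-\rho(A)^{-1}A_{E^0\backslash H})$ rather than its inverse; since the map is linear and invertible either way, this does not affect the argument.
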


\begin{proof}
Theorem~\ref{KMScrit}(\ref{critb}) shows that each KMS$_{\ln\rho(A)}$ state has the 
form~\eqref{genKMScrit}. 

Suppose that $(r\epsilon, (1-r)t) = (r'\epsilon', (1-r')t')$. Then $(1 - r)\sum t_C 
\psi_C = (1 - r')\sum t'_C \psi_C$, and so $\phi_{r, \epsilon, 
t} - \phi_{r',\epsilon',t'} = (r\phi_\epsilon - r'\phi_{\epsilon'}) \circ q_H$. 
Since $\sum t_C = \sum t'_C = 1$, we also have $1 - r = 1 - r'$ and hence $r = r'$. 
So either $r = 0$ or $\epsilon = 
\epsilon'$, and in either case, $r\phi_\epsilon = r'\phi_{\epsilon'}$, giving 
$\phi_{r,\epsilon,t} - \phi_{r',\epsilon',t'} = 0$.

Now suppose that $\phi_{r, \epsilon, t} = \phi_{r', \epsilon', t'}$. Fix $C \in \mc(E)$ 
and $v \in C$. For $C' \in \mc(E)$, the formula~\eqref{formphiC} shows that 
$\psi_{C'}(p_v) = \delta_{C, C'}(1 + \|z^C\|)^{-1}x^C_v$. Since $q_H(p_v) = 0$, 
\[
0 = \phi_{r, \epsilon, t}(p_v) - \phi_{r', \epsilon', t'}(p_v)
	= \big((1-r)t_C - (1 - r')t'_C\big) (1 + \|z^C\|)^{-1} x^C_v.
\]
Parts (a)~and~(d) of \cite[Theorem~1.5]{Seneta} imply that $x^C_v > 0$, and so $(1 - 
r)t_C = (1 - r')t'_C$. It remains to show that $r \epsilon = r'\epsilon'$. 
We have $r = 1 - \|(1-r)t\|_1 = 1 - \|(1-r')t'\|_1 = r'$, and so $0	= \phi_{r, \epsilon, 
t} - \phi_{r,\epsilon',t} = r(\phi_\epsilon \circ q_H - \phi_{\epsilon'}\circ q_H)$.
If $r = 0$, then we trivially have $r\epsilon = r' \epsilon'$. Suppose that $r \not= 0$. 
Then $\phi_\epsilon \circ q_H = \phi_{\epsilon'} \circ q_H$. Proposition~\ref{old2.5} 
implies that $q_H^*$ is injective, so $\phi_\epsilon = \phi_{\epsilon'}$; since  $\epsilon \mapsto \phi_\epsilon$ is 
injective  \cite[Theorem~3.1(b)]{aHLRS1}, we deduce that $\epsilon = \epsilon'$.
\end{proof}

\section{The KMS simplices for a fixed inverse temperature}\label{sec:allKMS}

In this section, we consider a finite directed graph $E$ and a real number $\beta$, and aim to describe the extreme points of the KMS$_\beta$ simplices of 
$\T C^*(E)$ and $C^*(E)$. The states described in Theorem~\ref{KMScrit} will be some of them. We generate some more candidates by applying \cite[Theorem~3.1]{aHLRS1} to a graph of the form $E\backslash H$. We continue to use the recursive description of the saturation $\Sigma H$ described on page~\pageref{eq:satdef}.

\begin{prop}\label{defphiv}
Suppose that $H$ is a hereditary subset of $E^0$ and $\beta>\ln\rho(A_{E^0\backslash H})$. For each $v\in E^0\backslash H$ the series $\sum_{\mu\in (E\backslash H)^*v}e^{-\beta|\mu|}$ converges with sum $y_v\geq 1$; let $y$ be the the vector $(y_v)$ in $[1,\infty)^{E^0\backslash H}$. Then for each $v\in E^0\backslash H$, there is a KMS$_\beta$ state $\phi^H_v$ of $\T C^*(E\backslash H)$ such that
\begin{equation}\label{eq:phiv formula}
\phi_v^H(s_\mu s^*_\nu) = \delta_{\mu,\nu} e^{-\beta|\mu|} (1-e^{-\beta}A_{E^0\backslash H})^{-1}(s(\mu),v)y_v^{-1}\quad\text{ for $\mu,\nu \in (E\backslash H)^*$.}
\end{equation}
The states $\{\phi_v^H:v\in E^0\backslash H\}$ are the extremal KMS$_\beta$ states of $\T C^*(E\backslash H)$. 
\end{prop}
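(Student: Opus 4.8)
The plan is to deduce everything from \cite[Theorem~3.1]{aHLRS1} applied to the graph $E\backslash H$, and then to identify the extreme points of the resulting simplex by elementary convex geometry. First I would record the setup: by Proposition~\ref{quotmapH}, $E\backslash H$ is a finite directed graph, and since $H$ is hereditary its vertex matrix is precisely the subblock $A_{E^0\backslash H}$. The standing hypothesis $\beta>\ln\rho(A_{E^0\backslash H})$ is exactly the condition under which \cite[Theorem~3.1]{aHLRS1} applies to $E\backslash H$. Part~(a) of that theorem gives the convergence of $\sum_{\mu\in(E\backslash H)^*v}e^{-\beta|\mu|}$ to a number $y_v\geq 1$ (which by \eqref{eq:inverse series} equals $\sum_{w\in E^0\backslash H}(1-e^{-\beta}A_{E^0\backslash H})^{-1}(w,v)$), and part~(b) provides an affine isomorphism $\epsilon\mapsto\phi_\epsilon$ of the simplex
\[
\Delta:=\Big\{\epsilon\in[0,\infty)^{E^0\backslash H}:\textstyle\sum_{w}\epsilon_w y_w=1\Big\}
\]
onto the KMS$_\beta$ simplex of $\T C^*(E\backslash H)$, where $\phi_\epsilon(s_\mu s_\nu^*)=\delta_{\mu,\nu}e^{-\beta|\mu|}\sum_{w}(1-e^{-\beta}A_{E^0\backslash H})^{-1}(s(\mu),w)\epsilon_w$.

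Next I would realise the states $\phi^H_v$ as distinguished members of this family. For each $v\in E^0\backslash H$ put $\epsilon^{(v)}:=y_v^{-1}\delta_v$, the normalised point mass at $v$. Since $y_v\geq 1>0$, this vector lies in $[0,\infty)^{E^0\backslash H}$ and satisfies $\epsilon^{(v)}\cdot y=y_v^{-1}y_v=1$, so $\epsilon^{(v)}\in\Delta$ and $\phi^H_v:=\phi_{\epsilon^{(v)}}$ is a KMS$_\beta$ state. Substituting $\epsilon=\epsilon^{(v)}$ into the formula for $\phi_\epsilon$ collapses the sum over $w$ to the single term $w=v$ and reproduces \eqref{eq:phiv formula} exactly; this settles both existence and the stated formula.

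Finally I would pin down $\{\phi^H_v\}$ as the extreme points. The key observation is that the extreme points of $\Delta$ are precisely the normalised point masses $\{\epsilon^{(v)}:v\in E^0\backslash H\}$: because every $y_w$ is strictly positive, $\Delta$ is the convex hull of these points, any $\epsilon\in\Delta$ whose support contains two distinct vertices is a proper convex combination of the corresponding $\epsilon^{(v)}$ and so is not extreme, while each $\epsilon^{(v)}$ is extreme by the minimality of its support. An affine isomorphism of compact convex sets carries extreme points bijectively onto extreme points, so the extreme points of the KMS$_\beta$ simplex of $\T C^*(E\backslash H)$ are exactly the images $\phi_{\epsilon^{(v)}}=\phi^H_v$, as claimed.

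There is no deep obstacle here: the statement is a concrete repackaging of \cite[Theorem~3.1]{aHLRS1}. The one point that needs care is matching conventions — one must use the precise form of the parametrisation in \cite[Theorem~3.1(b)]{aHLRS1} (so that $\phi_\epsilon(p_w)=\big((1-e^{-\beta}A_{E^0\backslash H})^{-1}\epsilon\big)_w$ and the normalisation constraint reads $\epsilon\cdot y=1$) in order for the substitution $\epsilon=y_v^{-1}\delta_v$ to land exactly on \eqref{eq:phiv formula}, and one must invoke that the map of \cite[Theorem~3.1(b)]{aHLRS1} is genuinely an affine isomorphism of the \emph{full} simplices, which is what guarantees it respects extreme points.
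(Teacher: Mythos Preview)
Your proposal is correct and follows essentially the same route as the paper: apply \cite[Theorem~3.1]{aHLRS1} to $E\backslash H$, take $\epsilon^v=y_v^{-1}\delta_v$, and read off both the formula~\eqref{eq:phiv formula} and the extremality. The only cosmetic difference is that the paper cites \cite[Theorem~3.1(c)]{aHLRS1} and \cite[Remark~3.2]{aHLRS1} directly for the identification of extreme points, whereas you spell out the elementary convex-geometry argument that those references encode.
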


\begin{proof}
Applying \cite[Theorem~3.1(a)]{aHLRS1} to $E\backslash H$ shows that the series defining $y_v$ converges. We define $\epsilon^v\in [0,\infty)^{E^0\backslash H}$ by $\epsilon^v_u=\delta_{u,v}y_v^{-1}$. Then $\epsilon^v\cdot y=1$, and the corresponding probability measure $m^v=(1-e^{-\beta}A_{E^0\backslash H})^{-1}\epsilon^v$ in \cite[Theorem~3.1(a)]{aHLRS1} has entries
\[
m^v_w=(1-e^{-\beta}A_{E^0\backslash H})^{-1}(w,v)y_v^{-1}\quad\text{for $w\in E^0\backslash H$.}
\] 
Thus by \cite[Theorem~3.1(b)]{aHLRS1}, there is a KMS$_\beta$ state $\phi_v^H$ of $\T C^*(E\backslash H)$ satisfying \eqref{eq:phiv formula}. It follows from \cite[Theorem~3.1(c)]{aHLRS1} that the $\phi_v^H$ are the extreme points of the simplex of KMS$_\beta$ states (as observed in \cite[Remark~3.2]{aHLRS1}).
\end{proof}

\begin{cor}\label{defphiv2}
Let $v\in E^0$ and $\beta>0$. Suppose that there is a hereditary subset $H$ of $E^0$ such that $v\notin H$ and $\ln\rho(A_{E^0\backslash H})<\beta$. Then there is a KMS$_\beta$ state $\phi_{\beta,v}$ of $(\T C^*(E),\alpha)$ such that for every pair $\mu,\nu\in E^*$, we have
\begin{equation}\label{defphibetav}
\phi_{\beta,v}(s_\mu s^*_\nu)=
\begin{cases}
0&\text{if $s(\mu)E^*v=\emptyset$}\\
\delta_{\mu,\nu} \Big(e^{-\beta|\mu|}\sum_{\lambda\in s(\mu)E^*v}e^{-\beta|\lambda|}\Big)y_v^{-1}&\text{if $s(\mu)E^*v\not=\emptyset$;}
\end{cases}
\end{equation}
for every $H$ satisfying these hypotheses, we have $\phi_{\beta,v}=\phi^H_v\circ q_H$.
\end{cor}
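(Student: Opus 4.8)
The plan is to \emph{define} $\phi_{\beta,v}$ as the composition of the state $\phi_v^H$ from Proposition~\ref{defphiv} with the quotient map $q_H$, and then to read off both the formula~\eqref{defphibetav} and the independence of $H$ from the path combinatorics.

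First I would take $H$ with $v\notin H$ and $\ln\rho(A_{E^0\backslash H})<\beta$, so that Proposition~\ref{defphiv} supplies the KMS$_\beta$ state $\phi_v^H$ on $\T C^*(E\backslash H)$, and set $\phi_{\beta,v}:=\phi_v^H\circ q_H$. Since $\beta>0$, Proposition~\ref{old2.5} immediately guarantees that $\phi_{\beta,v}$ is a KMS$_\beta$ state of $(\T C^*(E),\alpha)$. Being KMS, $\phi_{\beta,v}$ satisfies the characterisation~\eqref{workhorse}, namely $\phi_{\beta,v}(s_\mu s_\nu^*)=\delta_{\mu,\nu}e^{-\beta|\mu|}\phi_{\beta,v}(p_{s(\mu)})$; this reduces the whole computation to evaluating $\phi_{\beta,v}$ on the vertex projections, where $\phi_{\beta,v}(p_w)=\phi_v^H(q_H(p_w))$.

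The main combinatorial input is the identity $(E\backslash H)^*=\{\lambda\in E^*:s(\lambda)\notin H\}$ (recorded in the proof of Proposition~\ref{old2.5}), together with the hereditariness of $H$. For $w\in H$ one has $q_H(p_w)=0$, so $\phi_{\beta,v}(p_w)=0$; and hereditariness forces $wE^*v=\emptyset$ (a path from $w\in H$ to $v\notin H$ would put $v$ in $H$), so this matches the first branch of~\eqref{defphibetav}. For $w\notin H$ one has $q_H(p_w)=p_w^{E\backslash H}$, and evaluating~\eqref{eq:phiv formula} on the trivial path at $w$ gives $\phi_{\beta,v}(p_w)=(1-e^{-\beta}A_{E^0\backslash H})^{-1}(w,v)\,y_v^{-1}$. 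I would then expand the resolvent entry using~\eqref{eq:inverse series} and note that, since $w,v\notin H$, every path in $E$ from $w$ to $v$ lies in $E\backslash H$, so $w(E\backslash H)^*v=wE^*v$; substituting back into the KMS formula yields exactly~\eqref{defphibetav}.

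For the independence statement, the key observation is that the right-hand side of~\eqref{defphibetav} never mentions $H$: its sums run over paths in $E$, and the same identity (with $v\notin H$) gives $E^*v=(E\backslash H)^*v$, so $y_v=\sum_{\mu\in E^*v}e^{-\beta|\mu|}$ depends only on $E$, $\beta$ and $v$. Hence for any two admissible hereditary sets $H,H'$ the states $\phi_v^H\circ q_H$ and $\phi_v^{H'}\circ q_{H'}$ agree on every spanning element $s_\mu s_\nu^*$ and therefore coincide. I expect the only genuine work to be the bookkeeping in these path identities—verifying that hereditariness annihilates the $w\in H$ terms and that passing to $E\backslash H$ alters none of the relevant path sums; the rest is assembling results already established.
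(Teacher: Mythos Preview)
Your proposal is correct and follows essentially the same route as the paper: define $\phi_{\beta,v}:=\phi_v^H\circ q_H$, use hereditariness of $H$ and $v\notin H$ to identify $w(E\backslash H)^*v=wE^*v$ (and hence also $y_v=\sum_{\mu\in E^*v}e^{-\beta|\mu|}$), and read off the formula and its $H$-independence. The only cosmetic difference is that you first invoke the KMS characterisation~\eqref{workhorse} to reduce to vertex projections, whereas the paper computes $\phi_v^H(s_\mu s_\nu^*)$ directly from~\eqref{eq:phiv formula}; both amount to the same calculation.
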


Notice that \eqref{defphibetav} implies that the state $\phi_{\beta,v}$ does not depend on the choice of the hereditary set $H$ satisfying $v\notin H$ and $\ln\rho(A_{E^0\backslash H})<\beta$.

\begin{proof}
Proposition~\ref{defphiv} gives us a KMS$_\beta$ state $\phi^H_v$ of $(\T C^*(E\backslash H),\alpha)$. Because $H$ is hereditary, every path $\lambda$ in $E^*v$ lies entirely in $E\backslash H$. Thus \eqref{eq:phiv formula} implies that for every $\mu,\nu\in (E\backslash H)^*$, we have
\[
\phi_v^H(s_\mu s^*_\nu) = \delta_{\mu,\nu}\Big(e^{-\beta|\mu|}\sum_{\lambda\in s(\mu)(E\backslash H)^*v}e^{-\beta|\lambda|}\Big)y_v^{-1}=\delta_{\mu,\nu} \Big(e^{-\beta|\mu|}\sum_{\lambda\in s(\mu)E^*v}e^{-\beta|\lambda|}\Big)y_v^{-1};
\]
notice that \eqref{eq:phiv formula} is zero if $s(\mu)E^*v=\emptyset$, and in that case we need to interpret the empty sum on the right-hand side as $0$. For  $\mu,\nu\in E^*$ with $s(\mu)=s(\nu)\in H$, we have $q_H(s_\mu s^*_\nu)=0$. Thus for arbitrary $\mu,\nu\in E^*$ with $s(\mu)=s(\nu)$, we have
\[
\phi_v^H\circ q_H(s_\mu s^*_\nu)=
\begin{cases}
0&\text{if $s(\mu)=s(\nu)\in H$}\\
\delta_{\mu,\nu} \Big(e^{-\beta|\mu|}\sum_{\lambda\in s(\mu)E^*v}e^{-\beta|\lambda|}\Big)y_v^{-1}&\text{if $s(\mu)=s(\nu)\in E^0\backslash H$,}
\end{cases}
\]
and $\phi_{\beta,v}:= \phi^H_v\circ q_H$ is a KMS$_\beta$ state of $(\T C^*(E),\alpha)$ satisfying \eqref{defphibetav}.
\end{proof}

\begin{thm}\label{thm:altogether}
Suppose that $E$ is a finite directed graph and $\beta$ is a real number, and denote by $\alpha$ all the actions of $\RR$ obtained by lifting gauge actions on Toeplitz algebras and graph algebras. Let $H_\beta$ be the hereditary closure in $E^0$ of $\{C \in E^0/\!\!\sim : \ln\rho(A_C)>\beta\}$. 
\begin{enumerate}
\item\label{it:nostates} If $H_\beta = E^0$, then $(\T C^*(E),\alpha)$ has no KMS$_\beta$ states. 
\item\label{it:nocrit} Suppose that $H_\beta \not= E^0$ and that $\beta > \ln\rho(A_{E^0\backslash H_\beta})$. For $v \in E^0\backslash H_\beta$, there is a KMS$_\beta$ state $\phi_{\beta,v}$ of $(\T C^*(E),\alpha)$ satisfying \eqref{defphibetav}. Then  
\[
\big\{\phi_{\beta, v}: v \in E^0 \backslash H_\beta\big\}
\] 
are the extreme points of the KMS$_\beta$ simplex of $(\T C^*(E),\alpha)$. A KMS$_\beta$ state factors through $C^*(E)$ if and only if it belongs to the convex hull of 
\[
\big\{\phi_{\beta, v}: v \text{ is a source in } E \backslash \Sigma H_\beta\big\}.
\]
\item\label{it:critstates} Suppose that $H_\beta \not= E^0$ and that $\beta = 
\ln\rho(A_{E^0 \backslash H_\beta})$. Let $K_\beta$ be the hereditary closure in $E^0$ of $\{C \in E^0/\!\!\sim : \ln\rho(A_C)\geq\beta\}$. For $v \in E^0 \backslash K_\beta$, there is a KMS$_\beta$ state $\phi_{\beta, v}$ of $(\T C^*(E),\alpha)$ satisfying \eqref{defphibetav}. For $C \in \mc(E \backslash H_\beta)$, let $\psi_C^{H_\beta}$ be the KMS$_\beta$ state of $(\T C^*(E \backslash H_\beta),\alpha)$ obtained by applying Theorem~\ref{KMScrit}(\ref{crita}) to the graph $E\backslash H_\beta$. Then the states
\begin{equation}\label{eq:extreme points}
\big\{\psi_C:=\psi_C^{H_\beta}\circ q_{H_\beta} : C \in \mc(E\backslash H_\beta)\big\} \cup
\big\{\phi_{\beta,v}: v \in E^0 \backslash{K_\beta}\big\}
\end{equation}
are the extreme points of the KMS$_\beta$ simplex of $(\T C^*(E),\alpha)$. A KMS$_\beta$ state factors through $C^*(E)$ if and only if it belongs to the convex hull of 
\begin{equation}\label{convhull}
\big\{\psi_C: C \in \mc(E\backslash H_\beta)\big\} \cup 
\big\{\phi_{\beta, v}: v \text{ is a source in } E \backslash \Sigma K_\beta\big\}.
\end{equation}
\end{enumerate}
\end{thm}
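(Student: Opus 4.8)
The plan is to handle the three parts in turn, in each case pushing the problem down to the quotient graph $E\backslash H_\beta$, where the results of \S\ref{sec:ToeplitzKMS} and \cite{aHLRS1} already apply. For part~\eqref{it:nostates}, note that $H_\beta=E^0$ means, by the definition of $H_\beta$, that $\{C\in E^0/\!\!\sim:\ln\rho(A_C)>\beta\}$ generates $E^0$ as a hereditary set; so Proposition~\ref{old2.5} with $H=E^0$ forces $\phi(p_v)=0$ for every $v\in E^0$ and every KMS$_\beta$ state $\phi$, whence $1=\phi(1)=\sum_v\phi(p_v)=0$, a contradiction. This is exactly the argument of Proposition~\ref{discardbottom}\eqref{elempropc}.

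For part~\eqref{it:nocrit}, existence of $\phi_{\beta,v}$ satisfying \eqref{defphibetav} is Corollary~\ref{defphiv2} applied with $H=H_\beta$ (indeed $v\notin H_\beta$ and $\ln\rho(A_{E^0\backslash H_\beta})<\beta$). Since $\{C:\ln\rho(A_C)>\beta\}$ generates $H_\beta$ and $H_\beta\neq E^0$, Proposition~\ref{old2.5} makes $q_{H_\beta}^*$ an affine isomorphism of the KMS$_\beta$ simplex of $\T C^*(E\backslash H_\beta)$ onto that of $\T C^*(E)$. As affine isomorphisms carry extreme points to extreme points, and Proposition~\ref{defphiv} identifies the extreme points of the first simplex as $\{\phi_v^{H_\beta}:v\in E^0\backslash H_\beta\}$, their images $\phi_v^{H_\beta}\circ q_{H_\beta}=\phi_{\beta,v}$ are the extreme points of the KMS$_\beta$ simplex of $\T C^*(E)$.

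Part~\eqref{it:critstates} is the same reduction, but now $\rho(A_{E^0\backslash H_\beta})=e^\beta$, so $\beta$ is critical for $E\backslash H_\beta$ and Theorem~\ref{KMScrit} applies to that graph. Let $H'$ be the hereditary closure inside $E^0\backslash H_\beta$ of $\bigcup_{C\in\mc(E\backslash H_\beta)}C$. Theorem~\ref{KMScrit}\eqref{critb} and Corollary~\ref{cor:state decomp} describe the extreme points of the KMS$_\beta$ simplex of $\T C^*(E\backslash H_\beta)$ as the $\psi_C^{H_\beta}$ together with the extreme points of the states factoring through $\T C^*((E\backslash H_\beta)\backslash H')$; since $\rho(A_{(E^0\backslash H_\beta)\backslash H'})<e^\beta$, Proposition~\ref{defphiv} names the latter. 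Transporting through $q_{H_\beta}^*$, using $q_{H'}\circ q_{H_\beta}=q_{H_\beta\cup H'}$ and the bookkeeping identity $H_\beta\cup H'=K_\beta$ (every critical component of $E\backslash H_\beta$ lies downstream of a minimal one, so $H'$ is the hereditary closure of $\{C\subset E^0\backslash H_\beta:\ln\rho(A_C)=\beta\}$), identifies these images as the $\phi_{\beta,v}$ with $v\in E^0\backslash K_\beta$ (whose existence comes from Corollary~\ref{defphiv2} with $H=K_\beta$). This yields \eqref{eq:extreme points}.

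The substantive work is the characterisation of which KMS$_\beta$ states factor through $C^*(E)$, and I would treat parts~\eqref{it:nocrit} and~\eqref{it:critstates} together (part~\eqref{it:nocrit} being the case $K_\beta=H_\beta$). By \cite[Lemma~2.2]{aHLRS1} and positivity, a KMS$_\beta$ state factors through $C^*(E)$ iff the Cuntz--Krieger relation $\phi(p_u)=\sum_{r(e)=u}e^{-\beta}\phi(p_{s(e)})$ holds at every receiving vertex $u$, that is, iff $(Am^\phi-e^\beta m^\phi)_u=0$ whenever $u$ is not a source. Now $m^{\psi_C}$ is supported off $H_\beta$ and, by \eqref{eigenvectorforA} for $E\backslash H_\beta$ together with the heredity of $H_\beta$ (which kills all cross terms), satisfies $Am^{\psi_C}=e^\beta m^{\psi_C}$ on all of $E^0$; a direct path count gives $(Am^{\phi_{\beta,v}}-e^\beta m^{\phi_{\beta,v}})_u=-y_v^{-1}e^\beta\delta_{u,v}$, the surplus coming from the length-zero path at $v$. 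Hence for $\phi=\sum_C r_C\psi_C+\sum_v t_v\phi_{\beta,v}$ one has $(Am^\phi-e^\beta m^\phi)_u=-t_u y_u^{-1}e^\beta$, so $\phi$ factors through $C^*(E)$ iff $t_u=0$ at every receiving $u$; the $\psi_C$ always survive. Finally a saturation argument converts ``source of $E$'' into ``source of $E\backslash\Sigma K_\beta$'': a vertex of $E^0\backslash K_\beta$ receiving edges in $E$ only from $\Sigma K_\beta$ would itself lie in $\Sigma K_\beta$, so for $v\notin\Sigma K_\beta$ the two notions of source coincide, while each $v\in\Sigma K_\beta\backslash K_\beta$ is a receiving vertex and is therefore excluded. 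This produces the convex hull \eqref{convhull}. I expect the saturation bookkeeping — matching $H_\beta\cup H'$ with $K_\beta$ and reconciling sources of $E$ with sources of $E\backslash\Sigma K_\beta$ — to be the main obstacle, precisely the ``tricky subtleties'' flagged in the introduction.
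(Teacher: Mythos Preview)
Your argument for parts~\eqref{it:nostates}, \eqref{it:nocrit} (extreme points), and~\eqref{it:critstates} (extreme points) matches the paper's: reduce via $q_{H_\beta}^*$ using Proposition~\ref{old2.5}, then invoke Proposition~\ref{defphiv} or Theorem~\ref{KMScrit}/Corollary~\ref{cor:state decomp} on the quotient graph. The bookkeeping identity $H_\beta\cup H'=K_\beta$ is exactly what the paper uses (it observes that $K_\beta\backslash H_\beta$ is the set called $H$ when Theorem~\ref{KMScrit} is applied to $E\backslash H_\beta$).

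Where you genuinely diverge is in characterising which states factor through $C^*(E)$. The paper proves this via a chain of structural lemmas: Lemma~\ref{lem:extremeptsfactor} reduces to extreme points; Lemma~\ref{factorthruC*} handles $\phi_{\beta,v}$ by passing to $C^*(E\backslash\Sigma H)$ and invoking \cite[Corollary~6.1]{aHLRS1}; and for $\psi_C$ the paper shows $\psi_C^{H_\beta}(p_w)=0$ for $w\in\Sigma H_\beta\backslash H_\beta$ and then applies Lemma~\ref{lem:smallerC*E} (which in turn uses the commutative diagram~\eqref{eq:CD}). Your route is more direct: you compute $(Am^\phi-e^\beta m^\phi)_u$ explicitly for each extreme point, observe that the $\psi_C$ contribute nothing (since $Am^{\psi_C}=e^\beta m^{\psi_C}$ globally, by heredity of $H_\beta$) while each $\phi_{\beta,v}$ contributes $-e^\beta y_v^{-1}\delta_{u,v}$, and then read off that the Cuntz--Krieger relations hold precisely when $t_v=0$ at every non-source $v$ of $E$. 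The final saturation step---that for $v\in E^0\backslash K_\beta$, being a source in $E$ is equivalent to being a source in $E\backslash\Sigma K_\beta$---is the same observation the paper uses inside Lemma~\ref{factorthruC*}. Your approach is shorter and avoids Lemmas~\ref{lem:smallerC*E} and~\ref{relatephis} entirely; the paper's approach has the advantage of isolating reusable lemmas and making the role of the saturation $\Sigma H$ more transparent at each stage.
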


Both $H_\beta$ and $K_\beta$ are hereditary subsets of $E^0$, and $H_\beta\subset K_\beta$. Obviously the proof of the theorem must exploit the specific nature of these two sets, but some of our arguments are more general, and we separate out some lemmas. Throughout this section, $E$ is a finite directed graph.

\begin{lem}\label{lem:extremeptsfactor}
Suppose that $I$ is an ideal in a $C^*$-algebra $A$, that $\phi_1,\dots,\phi_n$ 
are states of $A$, and that $\lambda_i\in (0,\infty)$ for $1\leq i\leq n$. Then $\sum_{j=1}^n \lambda_j \phi_j$ factors through 
$A/I$ if and only if $\phi_i$ factors through $A/I$ for all $i$.
\end{lem}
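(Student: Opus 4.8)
The plan is to unwind the phrase ``factors through $A/I$'': for a positive linear functional $\omega$ on $A$, the statement that $\omega$ factors through the quotient $A/I$ is equivalent to the statement that $\omega$ vanishes on the ideal $I$, since in that case $\omega$ descends to a well-defined functional on $A/I$. Each $\phi_j$ is a state, hence positive, and each $\lambda_j>0$, so $\psi:=\sum_{j=1}^n\lambda_j\phi_j$ is a positive functional. Thus the lemma reduces to showing that $\psi$ vanishes on $I$ if and only if every $\phi_i$ does.

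The ``if'' direction is immediate: if $\phi_i|_I=0$ for all $i$, then $\psi|_I=\sum_j\lambda_j\,(\phi_j|_I)=0$.

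For the ``only if'' direction, I would suppose $\psi|_I=0$ and fix $i$. The remaining terms form a positive functional, so $\psi-\lambda_i\phi_i=\sum_{j\neq i}\lambda_j\phi_j$ is positive; hence $\lambda_i\phi_i\leq\psi$ in the usual order on self-adjoint functionals. Evaluating at a positive element $a\in I$ gives
\[
0\leq \lambda_i\phi_i(a)\leq\psi(a)=0,
\]
so $\phi_i(a)=0$ (recall $\lambda_i>0$). Since $I$ is itself a $C^*$-algebra, it is spanned by its positive elements: any $c\in I$ can be written as $c=h+ik$ with $h=\tfrac12(c+c^*)$ and $k=\tfrac{1}{2i}(c-c^*)$ self-adjoint elements of $I$, and each of $h,k$ is a difference of two positive elements of $I$. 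Therefore $\phi_i$ vanishes on all of $I$, which is exactly the assertion that $\phi_i$ factors through $A/I$.

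The only point requiring any care is the domination step $\lambda_i\phi_i\leq\psi$ together with the passage from vanishing on positive elements to vanishing on all of $I$; both are standard facts about positive functionals on $C^*$-algebras, so I do not anticipate a serious obstacle. It is worth emphasising that the argument uses only positivity of the $\phi_j$ and $\lambda_j$, not any normalisation, so it applies even though $\psi$ need not be a state.
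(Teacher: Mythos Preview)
Your proof is correct and matches the paper's argument essentially line for line: both directions are handled the same way, with the key step being that for positive $a\in I$ one has $0\le\lambda_i\phi_i(a)\le\sum_j\lambda_j\phi_j(a)=0$, and then the conclusion follows because $I$ is spanned by its positive elements.
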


\begin{proof}
If each $\phi_i$ factors through $A/I$, then so does every linear combination. So suppose that $\sum_j\lambda_j \phi_j$ factors through $C^*(E)$. For a 
positive element $a$ in $I$ and each $i$, we have 
\[
0 = \sum_{j=1}^n \lambda_j \phi_j(a) \geq \lambda_i \phi_i(a)\geq 0,
\]
and since $\lambda_i > 0$, this forces $\phi_i(a) = 0$. Since $I$ is spanned by 
its positive elements, we deduce that $\phi_i$ vanishes on $I$, and hence $\phi_i$ factors through $A/I$.
\end{proof}

\begin{lem}\label{lem:smallerC*E}
Suppose that $H \subset E^0$ is hereditary 
and that $\phi$ is a KMS$_\beta$-state of $\T C^*(E \backslash H)$ which factors 
through $C^*(E \backslash H)$. If $\phi(p_v) = 0$ for all $v \in \Sigma H \backslash H$, then the state $\phi \circ q_H$ of $\T C^*(E)$ factors through $C^*(E)$.
\end{lem}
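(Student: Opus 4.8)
The plan is to show that $\tau:=\phi\circ q_H$ annihilates every Cuntz--Krieger gap projection of $E$, since this is exactly the condition for a KMS$_\beta$ state to factor through $C^*(E)$. For a regular vertex $v$ (one with $vE^1\neq\emptyset$) write $Q_v:=p_v-\sum_{r(e)=v}s_es_e^*$, so that $\ker\pi_E$ is the ideal generated by the $Q_v$ and, by the standard structure theory \cite{CBMS}, $\ker\pi_E=\clsp\{s_\mu Q_vs_\nu^*:vE^1\neq\emptyset,\ s(\mu)=s(\nu)=v\}$. Expanding $s_\mu Q_vs_\nu^*=s_\mu s_\nu^*-\sum_{r(e)=v}s_{\mu e}s_{\nu e}^*$ and applying \eqref{workhorse} termwise gives $\tau(s_\mu Q_vs_\nu^*)=\delta_{\mu,\nu}e^{-\beta|\mu|}\tau(Q_v)$, so $\tau$ vanishes on $\ker\pi_E$ --- equivalently factors through $C^*(E)$ --- as soon as $\tau(Q_v)=\phi(q_H(Q_v))=0$ for every regular $v$. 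The whole proof therefore reduces to evaluating $q_H(Q_v)$.

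First I would read off $q_H(Q_v)$ from \eqref{defTCKquot}. If $v\in H$ then $q_H(p_v)=0$, and since $H$ is hereditary every $e$ with $r(e)=v$ has $s(e)\in H$, whence $q_H(s_es_e^*)=0$; thus $q_H(Q_v)=0$. If instead $v\in E^0\backslash H$, then $q_H(p_v)=p^{E\backslash H}_v$ and the only surviving terms of $q_H\big(\sum_{r(e)=v}s_es_e^*\big)$ come from edges $e$ with $r(e)=v$ and $s(e)\notin H$, that is, from the edges of $E\backslash H$ with range $v$; hence $q_H(Q_v)=Q^{E\backslash H}_v:=p^{E\backslash H}_v-\sum_{e\in v(E\backslash H)^1}s^{E\backslash H}_e(s^{E\backslash H}_e)^*$.

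It remains to check $\phi(Q^{E\backslash H}_v)=0$ for every regular $v\in E^0\backslash H$, and here the argument splits according to whether $v$ still receives edges in $E\backslash H$. If $v(E\backslash H)^1\neq\emptyset$ then $Q^{E\backslash H}_v$ is a genuine gap projection of $E\backslash H$, so the assumption that $\phi$ factors through $C^*(E\backslash H)$ forces $\phi(Q^{E\backslash H}_v)=0$. The remaining case is the crux of the lemma: when $v$ receives edges in $E$ but all of them issue from $H$, the vertex $v$ becomes a source of $E\backslash H$, the defining sum of $Q^{E\backslash H}_v$ is empty, and $q_H(Q_v)=p^{E\backslash H}_v$, so factoring through $C^*(E\backslash H)$ gives no information. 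Here I would observe that $vE^1\neq\emptyset$ together with $s(vE^1)\subseteq H$ places $v$ in $S_1H\backslash H\subseteq\Sigma H\backslash H$, so the hypothesis $\phi(p_v)=0$ on $\Sigma H\backslash H$ delivers exactly $\phi(q_H(Q_v))=\phi(p^{E\backslash H}_v)=0$. Collecting the three cases gives $\tau(Q_v)=0$ for all regular $v$, and the reduction of the first paragraph completes the proof.

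The one real obstacle is the identification in this last case: one must recognise that the vertices at which passing to the quotient by $H$ destroys a Cuntz--Krieger relation --- turning a receiver of $E$ into a source of $E\backslash H$ --- are precisely the level-one saturation vertices $S_1H\backslash H$. This is why the saturation hypothesis is the right one: factoring through $C^*(E\backslash H)$ controls $\phi$ at the vertices that remain regular in $E\backslash H$, while $\phi|_{\Sigma H\backslash H}=0$ is exactly what is needed to control it at the vertices that cease to be regular.
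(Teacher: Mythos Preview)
Your proof is correct and takes a more direct route than the paper's. Rather than checking $\tau(Q_v)=0$ vertex by vertex, the paper argues structurally: since $\phi$ factors through $C^*(E\backslash H)$ and vanishes on $\{p_v:v\in\Sigma H\backslash H\}$, it descends (via \cite[Lemma~2.2]{aHLRS1} and the identification $C^*(E\backslash H)/J\cong C^*(E\backslash\Sigma H)$ from \cite[Theorem~4.1(b)]{BPRS}) to a state $\bar{\bar\phi}$ of $C^*(E\backslash\Sigma H)$, and then a commutative diagram relating $q_H$, $\pi_E$, $\pi_{E\backslash H}$, $\bar q_{\Sigma H\backslash H}$ and $\bar q_{\Sigma H}$ shows that $\phi\circ q_H=\bar{\bar\phi}\circ\bar q_{\Sigma H}\circ\pi_E$. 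Your argument sidesteps the quotient identifications and diagram chase entirely, replacing them with the explicit description $\ker\pi_E=\clsp\{s_\mu Q_v s_\nu^*\}$ and the KMS relation~\eqref{workhorse} to reduce everything to the values $\tau(Q_v)$. The paper's route has the incidental bonus of exhibiting that $\phi\circ q_H$ in fact factors through the smaller algebra $C^*(E\backslash\Sigma H)$; yours makes transparent why the saturation hypothesis is exactly the right one---it handles precisely the regular vertices of $E$ that become sources in $E\backslash H$---and indeed shows that only the $S_1H\backslash H$ part of the hypothesis is invoked directly.
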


\begin{proof}
The hypothesis says that there is a state $\bar\phi$ of $C^*(E\backslash H)$ such that  $\phi = \bar\phi \circ \pi_{E \backslash H}$.
Let $J$ be the ideal of $C^*(E \backslash H)$ generated by $\{p_v : v \in \Sigma H 
\backslash H\}$. Then \cite[Lemma~2.2]{aHLRS1} implies that $\bar{\phi}$ factors through 
$C^*(E \backslash H)/J$. Theorem~4.1(b) of \cite{BPRS} implies that there is an isomorphism of $C^*(E \backslash \Sigma H)$ onto $C^*(E \backslash H)/J$ which takes $\bar{s}_e$ to $s_e + 
J$. So 
there is a KMS$_\beta$ state $\bar{\bar{\phi}}$ of $C^*(E \backslash \Sigma H)$ such that 
$\phi = \bar{\bar{\phi}} \circ \bar{q}_{\Sigma H \backslash H} \circ \pi_{E 
\backslash H}$. By considering the images of generators of $\T C^*(E)$, one checks that 
the diagram
\begin{equation}\label{eq:CD}
\begin{tikzpicture}[>=stealth, yscale=0.6]
\node (TC*E) at (4,2) {$\T C^*(E)$};
\node (C*E) at (4,0) {$C^*(E)$};
\node (TC*E/H) at (0,2) {$\T C^*(E \backslash H)$};
\node (C*E/H) at (-4,2) {$C^*(E \backslash H)$};
\node (C*E/SH) at (-4,0) {$C^*(E \backslash \Sigma H)$}; 
\draw[->] (TC*E)--(C*E) node[right, midway] {\small$\pi_E$};
\draw[->] (C*E)--(C*E/SH) node[above, midway] {\small$\bar{q}_{\Sigma H}$};
\draw[->] (TC*E)--(TC*E/H) node[above, midway] {\small$q_H$};
\draw[->] (TC*E/H)--(C*E/H) node[above, midway] {\small$\pi_{E \backslash H}$};
\draw[->] (C*E/H)--(C*E/SH) node[left, midway] {\small$\bar{q}_{\Sigma H \backslash H}$};
\end{tikzpicture}\end{equation}
commutes. Thus $\phi \circ 
q_H$ factors through the state $\bar{\bar{\phi}} \circ \bar{q}_{\Sigma H}$ of $C^*(E)$.
\end{proof}

\begin{lem}\label{relatephis}
Suppose that $E$ is a finite directed graph with vertex matrix $A$, and that $\beta>\ln\rho(A)$. Suppose that $G$ is a hereditary subset of $E^0$, and let $y^E\in [1,\infty)^{E^0}$ and $y^{E\backslash G}$ be the vectors of \cite[Theorem~3.1]{aHLRS1} for the graphs $E$ and $E\backslash G$. If $\epsilon\in [0,1]^{E^0}$ satisfies $\epsilon\cdot y=1$ and $\epsilon|_{G}=0$, then $\epsilon|_{E\backslash G}$ satisfies $(\epsilon|_{E\backslash G})\cdot y^{E\backslash G}=1$, and the corresponding KMS$_\beta$ states on the Toeplitz algebras satisfy $\phi_\epsilon=\phi_{\epsilon|_{E\backslash G}}\circ q_G$.
\end{lem}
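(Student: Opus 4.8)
The plan is to reduce everything to checking values on vertex projections. Since $\beta>\ln\rho(A)\geq\ln\rho(A_{E^0\backslash G})$ (the submatrix $A_{E^0\backslash G}$ having spectral radius at most $\rho(A)$), the hypotheses of \cite[Theorem~3.1]{aHLRS1} hold for both $E$ and $E\backslash G$, so the vectors $y^E$, $y^{E\backslash G}$ and the states $\phi_\epsilon$, $\phi_{\epsilon|_{E\backslash G}}$ all exist. The whole argument hinges on one combinatorial consequence of $G$ being hereditary, identical to the one used in the proof of Corollary~\ref{defphiv2}: for any $v\in E^0\backslash G$, every path of $E$ with source $v$ already lies in $E\backslash G$. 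Indeed, if $e\in E^1$ has $s(e)\notin G$ then $r(e)\notin G$ (otherwise heredity would force $s(e)\in G$), and iterating this along a path shows that a path emanating from $v\notin G$ never enters $G$. Consequently $E^*v=(E\backslash G)^*v$, and more generally $vE^*w=v(E\backslash G)^*w$ whenever $w\notin G$; symmetrically, $vE^*w=\emptyset$ whenever $v\in G$ and $w\notin G$.

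First I would prove the statement about $y$. By the series description recorded in \eqref{eq:inverse series} (applied with $H=\emptyset$ and with $H=G$), we have $y^E_v=\sum_{\mu\in E^*v}e^{-\beta|\mu|}$ and $y^{E\backslash G}_v=\sum_{\mu\in(E\backslash G)^*v}e^{-\beta|\mu|}$, so the combinatorial fact gives $y^{E\backslash G}_v=y^E_v$ for every $v\in E^0\backslash G$. Since $\epsilon|_G=0$, summing only over $E^0\backslash G$ then yields
\[
(\epsilon|_{E\backslash G})\cdot y^{E\backslash G}=\sum_{v\in E^0\backslash G}\epsilon_v\,y^E_v=\sum_{v\in E^0}\epsilon_v\,y^E_v=\epsilon\cdot y^E=1,
\]
as required; in particular $\epsilon|_{E\backslash G}$ is a legitimate parameter for \cite[Theorem~3.1]{aHLRS1} applied to $E\backslash G$.

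For the equality of states, I would note that both $\phi_\epsilon$ and $\phi_{\epsilon|_{E\backslash G}}\circ q_G$ are KMS$_\beta$ states of $\T C^*(E)$: the latter because $q_G$ is surjective and equivariant, so precomposition carries KMS$_\beta$ states to KMS$_\beta$ states (as in Proposition~\ref{old2.5}). By the workhorse relation \eqref{workhorse}, a KMS$_\beta$ state of $\T C^*(E)$ is completely determined by its values on the vertex projections, so it suffices to check that the two states agree on each $p_v$. Writing $m^\epsilon=(1-e^{-\beta}A)^{-1}\epsilon$ for the probability measure of \cite[Theorem~3.1]{aHLRS1}, we have $\phi_\epsilon(p_v)=m^\epsilon_v=\sum_{w\in E^0\backslash G}(1-e^{-\beta}A)^{-1}(v,w)\,\epsilon_w$, the sum running only over $w\notin G$ because $\epsilon|_G=0$. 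For $v\in G$ the combinatorial fact makes every entry $(1-e^{-\beta}A)^{-1}(v,w)$ with $w\notin G$ vanish, so $\phi_\epsilon(p_v)=0=\phi_{\epsilon|_{E\backslash G}}(q_G(p_v))$ since $q_G(p_v)=0$. For $v\notin G$ the identity $vE^*w=v(E\backslash G)^*w$ and \eqref{eq:inverse series} give $(1-e^{-\beta}A)^{-1}(v,w)=(1-e^{-\beta}A_{E^0\backslash G})^{-1}(v,w)$ for all $w\notin G$, whence
\[
\phi_\epsilon(p_v)=\sum_{w\in E^0\backslash G}(1-e^{-\beta}A_{E^0\backslash G})^{-1}(v,w)\,(\epsilon|_{E\backslash G})_w=m^{\epsilon|_{E\backslash G}}_v=\phi_{\epsilon|_{E\backslash G}}(q_G(p_v)).
\]
Thus the two states agree on all vertex projections and hence coincide.

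I expect no serious obstacle here; the one point demanding care is bookkeeping with the graph conventions—specifically remembering that heredity propagates from ranges to sources ($r(e)\in G\Rightarrow s(e)\in G$), which is what confines paths leaving $E^0\backslash G$ to the subgraph and underlies both the identification $y^E_v=y^{E\backslash G}_v$ and the matching of resolvent entries. This is essentially the $\epsilon=\epsilon^v$ special case already handled in Corollary~\ref{defphiv2}, extended linearly to arbitrary $\epsilon$ supported off $G$.
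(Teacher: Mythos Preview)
Your proposal is correct and follows essentially the same approach as the paper: identify $y^{E\backslash G}$ with $y^E|_{E^0\backslash G}$ via $E^*v=(E\backslash G)^*v$ for $v\notin G$, compute that the measures $m^\epsilon$ and $m^{\epsilon|_{E\backslash G}}$ agree on vertex projections (both vanishing on $G$), and conclude by the fact that KMS$_\beta$ states are determined by their values on vertex projections. Your write-up is more explicit about the combinatorial reason heredity gives $E^*v=(E\backslash G)^*v$, but the argument is the paper's.
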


\begin{proof}
For $w\in E^0\backslash G$, we have $(E\backslash G)^*w=E^*w$, and hence
\[
y^{E\backslash G}_w=\sum_{\mu\in(E\backslash G)^*w}e^{-\beta|\mu|}=\sum_{\mu\in E^*w}e^{-\beta|\mu|}=y^E_w.
\]
Thus $y^{E\backslash G}=y|_{E\backslash G}$, and $1=\epsilon\cdot y^E=(\epsilon|_{E\backslash G})\cdot (y|_{E\backslash G})=(\epsilon|_{E\backslash G})\cdot y^{E\backslash G}$.

Since $G$ is hereditary, for $v\in E^0$ we have
\[
m_v=\big((1-e^{-\beta}A)^{-1}\epsilon\big)_v=\begin{cases}((1-e^{-\beta}A_{E^0\backslash G})^{-1}\epsilon|_{E\backslash G})_v&\text{if $v\in E^0\backslash G$}\\
0&\text{if $v\in G$,}
\end{cases}
\]
and hence
\[
\phi_\epsilon(p^E_v)=\begin{cases}\phi_{\epsilon|_{E\backslash G}}(p^{E\backslash G}_v)&\text{if $v\in E^0\backslash G$}\\
0&\text{if $v\in G$.}
\end{cases}
\]
Thus $\phi_\epsilon$ and $\phi_{\epsilon|_{E\backslash G}}\circ q_G$ agree on the vertex projections $\{p_v\}$ in $\T C^*(E)$, and since both are KMS$_\beta$ states, \cite[Proposition~2.1(a)]{aHLRS1} implies that they are equal.
\end{proof}

\begin{lem}\label{factorthruC*}
Suppose that $H$ is a hereditary subset of $E$ and $\beta>\ln \rho(A_{E^0\backslash H})$. Let $v\in E^0\backslash H$, and let $\phi^H_v$ be the state of $\T C^*(E\backslash H)$ described in Proposition~\ref{defphiv}. Then $\phi^H_v\circ q_H$ factors through $C^*(E)$ if and only if $v$ is a source in $E\backslash\Sigma H$.
\end{lem}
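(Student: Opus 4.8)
The plan is to establish the two implications separately, using throughout that $\phi^H_v\circ q_H$ is the state $\phi_{\beta,v}$ of Corollary~\ref{defphiv2}, whose defining formula~\eqref{defphibetav} does not depend on the auxiliary hereditary set. The one bookkeeping fact I would keep track of is that the saturation $\Sigma H$ adjoins to $H$ only vertices that receive edges (the regular vertices), so a vertex $v\in E^0\backslash H$ with $vE^1=\emptyset$ automatically lies outside $\Sigma H$, and conversely a regular $v$ whose in-neighbours all lie in $\Sigma H$ is itself in $\Sigma H$.

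For the ``if'' direction, suppose $v$ is a source in $E\backslash\Sigma H$; in particular $v\in E^0\backslash\Sigma H$. Since $\rho(A_{E^0\backslash\Sigma H})\leq\rho(A_{E^0\backslash H})<e^\beta$, Corollary~\ref{defphiv2} lets me rewrite $\phi_{\beta,v}=\phi^{\Sigma H}_v\circ q_{\Sigma H}$, where $\phi^{\Sigma H}_v$ is the state of Proposition~\ref{defphiv} for the graph $E\backslash\Sigma H$; this is the state $\phi_{\epsilon^v}$ of \cite[Theorem~3.1]{aHLRS1} with $\epsilon^v$ supported on the single vertex $v$. Because $v$ is a source of $E\backslash\Sigma H$, the vector $\epsilon^v$ is supported on sources, and \cite[Corollary~6.1]{aHLRS1} applied to $E\backslash\Sigma H$ shows that $\phi^{\Sigma H}_v$ factors through $C^*(E\backslash\Sigma H)$. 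Now I would invoke Lemma~\ref{lem:smallerC*E} with $\Sigma H$ in place of $H$: as $\Sigma H$ is saturated we have $\Sigma(\Sigma H)\backslash\Sigma H=\emptyset$, so the hypothesis that $\phi^{\Sigma H}_v$ vanishes on $\{p_w:w\in\Sigma(\Sigma H)\backslash\Sigma H\}$ holds vacuously, and the lemma yields that $\phi^{\Sigma H}_v\circ q_{\Sigma H}=\phi_{\beta,v}$ factors through $C^*(E)$. Passing to $\Sigma H$ is the whole point here: it is exactly what trivialises the side condition in Lemma~\ref{lem:smallerC*E}.

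For the ``only if'' direction I would argue contrapositively, via a single computation of the value of $\phi_{\beta,v}$ on the gap projection $q_v:=p_v-\sum_{r(e)=v}s_es_e^*$. Writing each loop $\mu\in vE^*v$ of positive length uniquely as $\mu=e\lambda$ with $r(e)=v$ and $\lambda\in s(e)E^*v$, the KMS formula~\eqref{defphibetav} gives
\[
\phi_{\beta,v}(q_v)
=y_v^{-1}\Big(\sum_{\mu\in vE^*v}e^{-\beta|\mu|}-\sum_{\substack{\mu\in vE^*v\\ |\mu|\geq 1}}e^{-\beta|\mu|}\Big)
=y_v^{-1}>0,
\]
the sums being finite because $\phi_{\beta,v}$ is a state. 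Suppose $v$ is not a source in $E\backslash\Sigma H$. By the remark in the first paragraph this forces $vE^1\neq\emptyset$ (otherwise $v\notin\Sigma H$ and $v$ would receive no edge of $E\backslash\Sigma H$, making it a source there). But then $v$ is a regular vertex, so $q_v$ is one of the generators of the ideal $\ker\pi_E$, and any state factoring through $C^*(E)=\T C^*(E)/\ker\pi_E$ must vanish on it, contradicting $\phi_{\beta,v}(q_v)>0$. Hence $\phi_{\beta,v}$ does not factor through $C^*(E)$, which is the required contrapositive.

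The main obstacle is the saturation bookkeeping rather than any hard analysis: one has to recognise that replacing $H$ by $\Sigma H$ kills the extra hypothesis in Lemma~\ref{lem:smallerC*E}, and that ``$v$ is a source of $E\backslash\Sigma H$'' is equivalent to the purely local condition $vE^1=\emptyset$ once one knows that saturation adjoins only edge-receiving vertices. Given these two observations, the ``if'' part is a two-step descent through the quotients and the ``only if'' part reduces to the elementary gap computation above.
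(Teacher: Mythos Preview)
Your proof is correct. Both directions work, and the bookkeeping about saturation is handled properly: under the standard convention (which the paper uses, as the examples in \S\ref{sec:exs} confirm) saturation only adjoins vertices $w$ with $wE^1\neq\emptyset$, so your equivalence ``$v$ is a source in $E\backslash\Sigma H$ $\Leftrightarrow$ $vE^1=\emptyset$'' holds for $v\notin H$, and the gap-projection computation $\phi_{\beta,v}(q_v)=y_v^{-1}$ goes through exactly as you wrote.

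Your argument differs from the paper's in both directions, and in each case yours is the more economical route. For the ``if'' direction the paper stays with $H$: it first observes that $v$ must be a source in $E$ itself, deduces that $\phi^H_v$ factors through $C^*(E\backslash H)$ via \cite[Corollary~6.1]{aHLRS1}, and then verifies the vanishing hypothesis $\phi^H_v(p_w)=0$ for $w\in\Sigma H\backslash H$ needed in Lemma~\ref{lem:smallerC*E}. Your device of replacing $H$ by $\Sigma H$ at the outset, via the $H$-independence in Corollary~\ref{defphiv2}, renders that vanishing hypothesis vacuous and shortens the argument. For the ``only if'' direction the contrast is sharper: the paper first uses Lemma~\ref{extend=}\eqref{extendb} and \cite[Lemma~2.2]{aHLRS1} to push the state down to $C^*(E\backslash\Sigma H)$, then invokes Lemma~\ref{relatephis} to identify $\phi^H_v$ with a $\phi_\epsilon$ on $\T C^*(E\backslash\Sigma H)$, and finally appeals to \cite[Corollary~6.1(a)]{aHLRS1} to conclude that $\epsilon$ is supported on sources. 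Your direct evaluation $\phi_{\beta,v}\big(p_v-\sum_{r(e)=v}s_es_e^*\big)=y_v^{-1}>0$ bypasses all of this machinery; what the paper's longer route buys is a uniform reduction to the setting of \cite[Theorem~3.1]{aHLRS1}, but for the present lemma your single computation is both shorter and more transparent.
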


\begin{proof}
Suppose that $v$ is a source in $E\backslash\Sigma H$. Then $v$ must be a source in $E$: otherwise, we have $s(r^{-1}(v))\subset \Sigma H$, and saturation implies that $v\in \Sigma H$. In particular, $v$ is a source in $E\backslash H$, and \cite[Corollary~6.1(a)]{aHLRS1} implies that $\phi^H_v$ factors through $C^*(E\backslash H)$. With a view to applying Lemma~\ref{lem:smallerC*E}, we take $w\in \Sigma H\backslash H$. Since $\Sigma H$ is hereditary, $wE^nv=\emptyset$ for all $n$, and \eqref{eq:inverse series} implies that $(1-e^{-\beta}A_{E^0\backslash H})^{-1}(w,v)=0$. Thus 
\[
\phi^H_v(p_w)=(1-e^{-\beta}A_{E^0\backslash H})^{-1}(w,v)y_v^{-1}=0, 
\]
and Lemma~\ref{lem:smallerC*E} implies that $\phi^H_v\circ q_H$ factors through $C^*(E)$. 

Now suppose that $\phi^H_v \circ q_H$ factors through $C^*(E)$. Since 
$\phi^H_v \circ q_H(p_w)=0$ for $w \in H$, 
Lemma~\ref{extend=}(\ref{extendb}) implies that $\phi^H_v \circ q_H$ vanishes on $\{p_w : w \in \Sigma H\}$. Thus it follows from \cite[Lemma~2.2]{aHLRS1} that $\phi^H_v \circ q_H$ factors through $C^*(E \backslash \Sigma H)$. Since $\phi^H_v(p_v) \not= 0$, we deduce that $v \in E^0 \backslash \Sigma H$. Thus $\epsilon^v_w = (y^{E \backslash H}_v)^{-1} \delta_{v,w}$ vanishes for $w$ in the hereditary set $\Sigma H$, and Lemma~\ref{relatephis} implies that $\phi^H_v\circ q_H=\phi_{\epsilon^v|_{E \backslash \Sigma H}} \circ q_{\Sigma H 
\backslash H}$. Since $\phi^H_v \circ q_H$ factors through $C^*(E)$, for $w \in E^0 \backslash \Sigma H$ we have 
\begin{align*}
\phi_{\epsilon^v|_{E \backslash \Sigma H}}\Big(p_w - \sum_{e \in w(E \backslash \Sigma H)^1} s_e s^*_e\Big)
	&= \phi_{\epsilon^v|_{E \backslash \Sigma H}} \circ q_{\Sigma H \backslash H} 
		\Big(p_w - \sum_{e \in w(E \backslash H)^1} s_e s^*_e\Big) \\
	&= \phi^H_v \circ q_H\Big(p_w - \sum_{e \in wE^1} s_e s^*_e\Big) = 0. 
\end{align*}
Applying \cite[Lemma~2.2]{aHLRS1} to $E\backslash \Sigma H$ shows that 
$\phi_{\epsilon^v|_{E \backslash \Sigma H}}$ factors through $C^*(E \backslash \Sigma 
H)$. We have $\beta > \rho(A_{E^0 \backslash H})$, so Corollary~6.1(a) of \cite{aHLRS1} 
implies that $\epsilon^v|_{E \backslash \Sigma H}$ is supported on the sources of $E 
\backslash \Sigma H$, and hence $v$ is a source in $E \backslash \Sigma H$.
\end{proof}

\begin{proof}[Proof of Theorem~\ref{thm:altogether}]
(\ref{it:nostates}) We suppose that $\T C^*(E)$ has a KMS$_\beta$ state $\phi$, and 
prove that $H_\beta \not= E^0$. The set $\bigcup\{C \in {E^0/\!\!\sim} :
\ln\rho(A_C)>\beta\}$ generates $H_\beta$ as a hereditary set, and so 
Proposition~\ref{old2.5} implies that $\phi(p_w) = 0$ for all $w \in H_\beta$. Hence 
$1 = \phi(1) = \sum_{v \not\in H_\beta} \phi(p_v)$, and $H_\beta$ cannot be all of 
$E^0$.

(\ref{it:nocrit}) Applying Corollary~\ref{defphiv2} with $H=H_\beta$ gives the existence of the state $\phi_{\beta,v}$, and the last comment in Corollary~\ref{defphiv2} implies that $\phi_{\beta,v}=\phi^{H_\beta}_v\circ q_{H_\beta}$. We can apply Proposition~\ref{defphiv} with $H=H_\beta$, and deduce that the states $\phi^{H_\beta}_v$ for $v \notin H_\beta$ are the extreme points of the KMS$_\beta$ simplex of $\T C^*(E \backslash H_\beta)$. Since $H_\beta$ is not all of $E^0$, the final statement of Proposition~\ref{old2.5} implies that $q^*_{H_\beta}$ is an isomorphism of the KMS$_\beta$ simplex of $\T C^*(E \backslash H_\beta)$ onto that of 
$\T C^*(E)$. Hence the states $\phi_{\beta,v}=\phi^{H_\beta}_v \circ q_{H_\beta}$ are the extreme points of the KMS$_\beta$ simplex of $\T C^*(E)$. 

Lemma~\ref{factorthruC*} implies that $\phi_{\beta,v}=\phi^{H_\beta}_v\circ q_{H_\beta}$ factors through $C^*(E)$ if and only if $v$ is a source in $E \backslash \Sigma H_\beta$. So Lemma~\ref{lem:extremeptsfactor} implies that a KMS$_\beta$ state $\phi$ factors through $C^*(E)$ if and only if it belongs to the convex hull of $\{\phi_{\beta,v}: v \text{ is a source in } E \backslash \Sigma H_\beta\}$.

(\ref{it:critstates}) We can apply Corollary~\ref{defphiv2} with $H=K_\beta$ to get the state $\phi_{\beta,v}=\phi^{K_\beta}\circ q_{K_\beta}$.  As in \eqref{it:nocrit}, $q^*_{H_\beta}$ is an isomorphism of the KMS$_\beta$ simplex of $\T C^*(E \backslash H_\beta)$ onto that of $\T C^*(E)$. Since $\beta = \ln\rho(A_{E^0 \backslash H_\beta})$ is real, $\rho(A_{E^0 \backslash H_\beta})$ cannot be $0$, and \cite[Lemma~A.1(b)]{aHLRS1} implies that $E \backslash H_\beta$ has at least one cycle.  The set $K_\beta\backslash H_\beta$ is generated as a hereditary subset of $E^0\backslash H_\beta$ by 
the minimal critical components of $E \backslash H_\beta$, and hence is the set $H$ in Theorem~\ref{KMScrit} for the graph $E \backslash H_\beta$. Thus Corollary~\ref{cor:state 
decomp} implies that the KMS$_\beta$ states of 
$\T C^*(E \backslash H_\beta)$ have the form $\phi_{r, \epsilon, t}$, and that 
the extreme points are the ones of the form $\phi_{1, \epsilon^v, t} = \phi^{K_\beta\backslash H_\beta}_v \circ q_{K_\beta\backslash H_\beta}=\phi_{\beta,v}$ or $\phi_{0, \epsilon, \delta_C} = \psi^{H_\beta}_C$.
Proposition~\ref{quotmapH} implies that $q_{K_\beta\backslash H_\beta} \circ q_{H_\beta} = q_{K_\beta}$. Thus the KMS$_\beta$ simplex of $\T C^*(E)$ is the convex hull of the set~\eqref{eq:extreme points}. 

It remains to show that a convex combination of the states~\eqref{eq:extreme points} factors through $C^*(E)$ if and only if it belongs to the convex hull of the set~\eqref{convhull}. Lemma~\ref{factorthruC*} implies that $\phi^{K_\beta}_v 
\circ q_{K_\beta}$ factors through $C^*(E)$ if and only if $v$ is a 
source in $E \backslash \Sigma K_\beta$. We claim that the $\psi^{H_\beta}_C 
\circ q_{H_\beta}$ all factor through $C^*(E)$. To see this, fix $C \in \mc(E 
\backslash H_\beta)$. Theorem~\ref{KMScrit}(\ref{crita}) implies that $\psi^{H_\beta}_C$ factors through $C^*(E \backslash H_\beta)$. We have $v E^n C \not= \emptyset$ for all $v \in C$ and $n \in \NN$ because $C$ is a nontrivial connected component. Since $C \cap H_\beta = \emptyset$, we deduce that $C$ does not intersect any of the sets $S_k 
H_\beta$ of~\eqref{eq:satdef}, and hence $C \cap \Sigma H_\beta = 
\emptyset$. Then because $\Sigma H_\beta$ is hereditary, we have $w E^* C = \emptyset$ for all $w \in \Sigma H_\beta$. Hence~\eqref{eq:zCseries} implies that $z^C_w = 0$ for all $w \in \Sigma H_\beta \backslash H_\beta$, and so \eqref{formphiC} implies that $\psi^{H_\beta}_C(p_w) = 0$ for all $w \in \Sigma H_\beta 
\backslash H_\beta$. Now Lemma~\ref{lem:smallerC*E} implies that $\psi_C=\psi^{H_\beta}_C \circ q_{H_\beta}$ 
factors through $C^*(E)$.
\end{proof}

Theorem~\ref{thm:altogether} describes the KMS$_\beta$ simplex for each fixed $\beta$. However, it also makes sense to fix a vertex $v$, and ask for which $\beta$ there is a state $\phi_{\beta,v}$ of $(\T C^*(E),\alpha)$ as in Corollary~\ref{defphiv2}. 

\begin{cor}\label{betav}
Suppose that $E$ is a finite directed graph and $v\in E^0$. Define 
\[
\beta_v:=\max\{\ln\rho(A_C):C\leq v\}.
\]
Then there is a state $\phi_{\beta,v}$ satisfying \eqref{defphibetav} if and only if $\beta>\beta_v$.
\end{cor}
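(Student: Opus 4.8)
The plan is to realise $\phi_{\beta,v}$, when it exists, via Corollary~\ref{defphiv2} applied to a canonical hereditary set, and to read off existence from the convergence of a single series. Write $W_v:=\{u\in E^0:u\le v\}$ for the set of vertices that can reach $v$, and put $H:=E^0\backslash W_v$. First I would check that $H$ is hereditary: if $u\not\le v$ and $uE^*w\ne\emptyset$, then a path $u\to w$ followed by any path $w\to v$ would give $u\le v$, so $w\not\le v$ and $w\in H$. Since $v\in W_v$ we have $v\notin H$. Because $A_{W_v}$ is block upper-triangular in a Seneta decomposition, with diagonal blocks exactly the $A_C$ for the strongly connected components $C\subseteq W_v$, and since $C\subseteq W_v$ precisely when $C\le v$, we get $\ln\rho(A_{E^0\backslash H})=\ln\rho(A_{W_v})=\max\{\ln\rho(A_C):C\le v\}=\beta_v$. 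This identity is what converts the spectral hypothesis of Corollary~\ref{defphiv2} into the numeric condition $\beta>\beta_v$.

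For the forward implication, suppose $\beta>\beta_v$. Then $\beta>\ln\rho(A_{E^0\backslash H})$, and Corollary~\ref{defphiv2} (applied with this $H$, and with $v\notin H$) produces a state $\phi_{\beta,v}$ satisfying \eqref{defphibetav}. The only wrinkle is the degenerate case $\beta_v=-\infty$, where $W_v$ carries no cycle and $\rho(A_{E^0\backslash H})=0$; here $\beta$ may be $\le 0$, outside the literal hypothesis $\beta>0$ of Corollary~\ref{defphiv2}, but Proposition~\ref{defphiv} needs only $\beta>\ln\rho(A_{E^0\backslash H})=-\infty$, and \cite[Theorem~3.1]{aHLRS1} is available for all $\beta\in\RR$ when the spectral radius is $0$ (as noted in the remark following Proposition~\ref{discardbottom}). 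So the construction still goes through and we set $\phi_{\beta,v}:=\phi^H_v\circ q_H$.

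For the converse I argue by contraposition: assuming $\beta\le\beta_v$, I show no state can satisfy \eqref{defphibetav}. The point is that \eqref{defphibetav} refers to $y_v^{-1}$, where $y_v=\sum_{\lambda\in E^*v}e^{-\beta|\lambda|}$ (this is the $y_v$ of Proposition~\ref{defphiv}, since every path into $v$ avoids the hereditary set $H$), so the formula only makes sense when this series converges. I would show it diverges whenever $\beta\le\beta_v$. Since $\beta\in\RR$, here $\beta_v$ is finite, so I may choose a strongly connected component $C\le v$ with $\ln\rho(A_C)=\beta_v\ge\beta$; as $\rho(A_C)=e^{\beta_v}\ge 1$, this $C$ is nontrivial. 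Fix $w_0\in C$ and a path $\pi\in w_0E^*v$, and bound $y_v$ below by the subsum over paths $\sigma\pi$ with $\sigma$ a loop at $w_0$ inside $C$:
\[
y_v\ \ge\ e^{-\beta|\pi|}\sum_{n=0}^\infty e^{-\beta n}A_C^n(w_0,w_0).
\]
The power series $\sum_n A_C^n(w_0,w_0)z^n$ has radius of convergence $\rho(A_C)^{-1}=e^{-\beta_v}\le e^{-\beta}$, so the right-hand side diverges, and hence so does $y_v$.

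The main obstacle is the boundary case $\beta=\beta_v$: for $\beta<\beta_v$ the terms $e^{-\beta n}A_C^n(w_0,w_0)$ do not even tend to $0$, but at $\beta=\beta_v$ one sits exactly at the radius of convergence and must still rule out convergence. This is the one genuinely nontrivial input, and it comes from Perron--Frobenius theory for the irreducible matrix $A_C$: the Cesàro means $\tfrac1N\sum_{n<N}\rho(A_C)^{-n}A_C^n$ converge to a strictly positive matrix, whence $\sum_n\rho(A_C)^{-n}A_C^n(w_0,w_0)=\infty$. I would cite \cite{Seneta} for this. The remaining bookkeeping---that divergence of $y_v$ genuinely obstructs \eqref{defphibetav}, and that the forward construction reproduces \eqref{defphibetav}---is routine given Corollary~\ref{defphiv2}.
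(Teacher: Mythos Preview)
Your proof is correct but takes a different route for the ``only if'' direction. For ``if'' ($\beta>\beta_v$ implies existence), both you and the paper apply Corollary~\ref{defphiv2} with a suitable hereditary $H$: you use the $\beta$-independent choice $H=E^0\backslash W_v$ and compute $\ln\rho(A_{W_v})=\beta_v$, whereas the paper uses $H=K_\beta$ and simply checks $v\notin K_\beta$. For ``only if'', the paper gives a two-line set-theoretic argument: existence of $\phi_{\beta,v}$ means (via the hypothesis of Corollary~\ref{defphiv2}) there is \emph{some} hereditary $H$ with $v\notin H$ and $\ln\rho(A_{E^0\backslash H})<\beta$; then every component $C\le v$ lies in $E^0\backslash H$, so $\ln\rho(A_C)\le\ln\rho(A_{E^0\backslash H})<\beta$ and hence $\beta_v<\beta$. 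No series analysis is needed. Your approach instead shows directly that $y_v=\sum_{\lambda\in E^*v}e^{-\beta|\lambda|}$ diverges for $\beta\le\beta_v$, which requires the Perron--Frobenius Ces\`aro input at the boundary $\beta=\beta_v$. That is more work for this corollary, but it is precisely the argument the paper deploys later (in the lemma on partition functions in \S\ref{CR}) to identify $\beta_v$ as the abscissa of convergence of $Z_v(\beta)$, so your detour recovers that result en route.
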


\begin{proof}
First suppose that there exists such a state $\phi_{\beta,v}$. Then there is a hereditary set $H$ such that $v\notin H$ and $\ln\rho(A_{E^0\backslash H})<\beta$. But then any $C$ with $C\leq v$ lies in $E^0\backslash H$, and $\ln\rho(A_C)\leq \ln\rho(A_{E^0\backslash H})<\beta$. Thus $\beta_v=\max\{\ln\rho(A_C):C\leq v\}<\beta$. Conversely, suppose that $\beta>\beta_v$. Then the hereditary closure $K_\beta$ of $\{C:\ln\rho(A_C)\geq \beta\}$ does not contain $v$: for if so, then there exists $C\in \{C:\ln\rho(A_C)\geq \beta\}$ with $C\leq v$, and we have $\beta_v\geq \beta$. Thus we can apply Corollary~\ref{defphiv2} with $H=K_\beta$ to deduce the existence of $\phi_{\beta,v}$.
\end{proof}

\section{Examples}\label{sec:exs}

We give some examples to show how we can use Theorem~\ref{thm:altogether}  to compute all the KMS states on $\T C^*(E)$ and $C^*(E)$. Since we want to focus on how the different components of $E$ interact, we consider graphs in which the components are small.

\begin{ex}\label{dumbbell1}
The following graph $E$
\[
\begin{tikzpicture}
    \node[inner sep=1pt] (v) at (0,0) {$v$};
    \node[inner sep=1pt] (w) at (2,0) {$w$};
    \draw[-latex] (v)--(w);
    \foreach \x in {0,2} {
        \draw[-latex] (v) .. controls +(1.\x,1.\x) and +(-1.\x,1.\x) .. (v);
    }
    \foreach \x in {0,2,4} {
        \draw[-latex] (w) .. controls +(1.\x,1.\x) and +(-1.\x,1.\x) .. (w);
    }
\end{tikzpicture}
\]
has two strongly connected components $\{v\}$ and $\{w\}$. Both are nontrivial components, with $A_{\{v\}}=(2)$, $A_{\{w\}}=(3)$ and $\rho(A)=3$. 

\begin{itemize}
\item For $\beta>\ln\rho(A)=\ln 3$, the set $H_\beta$ of Theorem~\ref{thm:altogether} is empty, and Theorem~\ref{thm:altogether}\eqref{it:nocrit} gives a $1$-dimensional simplex of KMS$_{\beta}$ states on $(\T C^*(E),\alpha)$ with extreme points $\phi_{\beta,v}$ and $\phi_{\beta,w}$. None of these factor through $C^*(E)$. 

\item At $\beta=\ln 3$, $H_{\beta}$ is still empty, but $K_{\ln 3}$ is the hereditary closure of $\{w\}$, which is all of $E^0$. The only critical component is $\{w\}$, and hence Theorem~\ref{thm:altogether}\eqref{it:critstates} gives a  unique KMS$_{\ln 3}$ state  $\psi_{\{w\}}$ which factors through $C^*(E)$. 

\item For $\beta<\ln 3$, $H_\beta=E^0$, and $(\T C^*(E),\alpha)$ has no KMS$_\beta$ states.
\end{itemize}
\end{ex}

\begin{ex}\label{dumbbell2}
Reversing the horizontal arrow in the previous example makes a big difference. The graph $E$ now looks like
\[
\begin{tikzpicture}
    \node[inner sep=1pt] (v) at (0,0) {$v$};
    \node[inner sep=1pt] (w) at (2,0) {$w$};
    \draw[-latex] (w)--(v);
    \foreach \x in {0,2} {
        \draw[-latex] (v) .. controls +(1.\x,1.\x) and +(-1.\x,1.\x) .. (v);
    }
    \foreach \x in {0,2,4} {
        \draw[-latex] (w) .. controls +(1.\x,1.\x) and +(-1.\x,1.\x) .. (w);
    }
\end{tikzpicture}
\]
The strongly connected components are still $\{v\}$ and $\{w\}$, but now the minimal critical component $\{w\}$ is hereditary. 
\begin{itemize}
\item For $\beta>\ln 3=\ln\rho(A)$, $H_\beta=\emptyset$, and Theorem~\ref{thm:altogether}\eqref{it:nocrit} gives a $1$-dimensional simplex of KMS$_{\beta}$ states on $(\T C^*(E),\alpha)$ with extreme points $\phi_{\beta,v}$ and $\phi_{\beta,w}$. None of these factor through $C^*(E)$. 

\item For $\beta=\ln 3$, we have $H_\beta=\emptyset$ and $K_\beta=\{w\}$. Theorem~\ref{thm:altogether}\eqref{it:critstates} gives a $1$-dimensional simplex of KMS$_{\ln 3}$ states on $(\T C^*(E),\alpha)$ with extreme points $\phi_{\ln 3,v}$ and $\psi_{\{w\}}$, and only $\psi_{\{w\}}$ factors through $C^*(E)$. (We work out a formula for $\psi_{\{w\}}$ at the end of this example.)

\item For $\ln 2<\beta<\ln 3$, $H_\beta=\{w\}$, and Theorem~\ref{thm:altogether}\eqref{it:nocrit} gives a single KMS$_\beta$ state $\phi_{\beta, v}$ on $(\T C^*(E),\alpha)$, which does not factor through $C^*(E)$.

\item For $\beta=\ln 2$, $H_\beta=\{w\}$ and $K_\beta=\{v,w\}=E^0$. The graph $E\backslash H_\beta$ has a single critical component $\{v\}$, and Theorem~\ref{thm:altogether}\eqref{it:critstates} gives a unique KMS$_{\ln 2}$ state $\psi_{\{v\}}$ on $(\T C^*(E),\alpha)$. This state factors through $C^*(E)$.

\item For $\beta<\ln 2$, there are no KMS$_\beta$ states.
\end{itemize}
We can make the construction of these states quite explicit. We illustrate by working through the construction of the KMS$_{\ln 3}$ state $\psi_{\{w\}}$. The unimodular Perron-Frobenius eigenvector for the matrix $A_{\{w\}}=(3)$ is the scalar $x^{\{w\}}_w=1$, and the vector $z^{\{w\}}$ in \eqref{defz} is the scalar
\[
z^{\{w\}}_v=\rho(A)^{-1}\big(1-\rho(A)^{-1}A_{\{v\}}\big)^{-1}A(v,w)x^{\{w\}}_w=3^{-1}(1-3^{-1}.2)^{-1}1.1=3^{-1}.3=1.
\]
Thus $\|1+z^{\{w\}}\|_1=2$, $\psi_{\{w\}}(p_v)=\psi_{\{w\}}(p_w)=2^{-1}$, and
\[
\psi_{\{w\}}(s_\mu s_\nu^*)=\delta_{\mu,\nu}3^{-|\mu|}2^{-1}\quad\text{for $\mu,\nu\in E^*$.}
\]
\end{ex}

\begin{ex}\label{ex:2vertexcomponent}
We now replace the component $\{w\}$ with a 2-vertex component whose critical inverse
temperature still exceeds that of the component $\{v\}$. This gives an example in which the KMS$_\beta$ simplex changes dimension both as $\beta$ decreases to $\ln \rho(A)$, and as $\beta$ passes through $\ln \rho(A)$.
\[
\begin{tikzpicture}
    \node[inner sep=1pt] (v) at (0,0) {$v$};
    \node[inner sep=1pt] (w) at (2,0) {$w$};
    \node[inner sep=1pt] (u) at (2,1) {$u$};
    \draw[-latex] (w)--(v);
    \foreach \x/\xx in {0/3,2/5} {
        \draw[-latex] (v) .. controls +(1.\x,1.\x) and +(-1.\x,1.\x) .. (v);
        \draw[-latex] (w) .. controls +(-0.\xx,0.5) .. (u);
        \draw[-latex] (u) .. controls +(0.\xx,-0.5) .. (w);
        \draw[-latex] (u) .. controls +(1.\x,1.\x) and +(-1.\x,1.\x) .. (u);
    }
\end{tikzpicture}
\]
The strongly connected components are $\{v\}$ and  $\{w,u\}$. The block corresponding to the latter is $A_{\{w,u\}} =
\big(\begin{smallmatrix}2&2\\2&0\end{smallmatrix}\big)$, which has spectral radius  $\rho(A_{\{w,u\}})=\gamma := 1 + \sqrt5$. Since $\rho(A)=\max\{\rho(A_{\{v\}}),\rho(A_{\{w,u\}})\}=\gamma$, $\{w,u\}$ is a minimal critical component.
\begin{itemize}
\item For $\beta>\ln \gamma$, $H_\beta=\emptyset$, and
    Theorem~\ref{thm:altogether}\eqref{it:nocrit} gives a $2$-dimensional simplex of
    KMS$_{\beta}$ states on $(\T C^*(E),\alpha)$ with extreme points $\phi_{\beta,
    v}$, $\phi_{\beta, w}$ and $\phi_{\beta,u}$. None of these factor through
    $C^*(E)$.

\item For $\beta = \ln\gamma$, we have $H_\beta = \emptyset$ and $K_\beta =
    \{w,u\}$. 
    Theorem~\ref{thm:altogether}\eqref{it:critstates} gives a $1$-dimensional simplex
    of KMS$_{\ln\gamma}$ states on $(\T C^*(E),\alpha)$ with extreme points
    $\phi_{\ln\gamma,v}$ and $\psi_{\{w,u\}}$. Only $\psi_{\{w,u\}}$ factors through
    $C^*(E)$.

\item For $0<\beta<\ln\gamma$, we have $\{w,u\} \subseteq H_\beta$, and so the KMS$_\beta$ simplex is similar to that of Example~\ref{dumbbell2}. In particular, the
    dimension of the KMS$_\beta$ simplex drops again to 0 as $\beta$ drops below
    $\ln \gamma$, and disappears altogether for $\beta<\ln 2$.
\end{itemize}
\end{ex}

\begin{ex}\label{EminusGsourced} In the next graph $E$, we have added two trivial components, and now the subtleties involving saturations in Theorem~\ref{thm:altogether} come into play.
\[
\begin{tikzpicture}
\node[inner sep=1pt] (u_1) at (-2,0) {$u_1$};
\node[inner sep=1pt] (u_2) at (2,0) {$u_2$};
    \node[inner sep=1pt] (v) at (0,0) {$v$};
    \node[inner sep=1pt] (w) at (4,0) {$w$};
    \draw[-latex] (w)--(u_2);
     \draw[-latex] (u_2)--(v);
     \draw[-latex] (u_1)--(v);
    \foreach \x in {0,2} {\draw[-latex] (v) .. controls +(1.\x,1.\x) and +(-1.\x,1.\x) .. (v);}
    \foreach \x in {0,2,4} {
        \draw[-latex] (w) .. controls +(1.\x,1.\x) and +(-1.\x,1.\x) .. (w);
    }
\end{tikzpicture}
\]
\begin{itemize}
\item For $\beta>\ln 3$, we have $H_\beta=\emptyset$, and Theorem~\ref{thm:altogether}\eqref{it:nocrit} gives us a $3$-dimensional simplex of KMS$_\beta$ states on $(\T C^*(E),\alpha)$ with extreme points $\phi_{\beta,v}$, $\phi_{\beta,w}$, $\phi_{\beta,u_1}$ and $\phi_{\beta,u_2}$. The state $\phi_{\beta,u_1}$ factors through $C^*(E)$.

\item At $\beta=\ln 3$, we have $H_\beta=\emptyset$ and $K_\beta=\{w\}$. Theorem~\ref{thm:altogether}\eqref{it:critstates} gives us a $3$-dimensional simplex of KMS$_{\ln 3}$ states, with extreme points $\phi_{\ln 3,v}$, $\phi_{\ln 3,u_1}$ and $\phi_{\ln 3,u_2}$ alongside the state $\psi_{\{w\}}$ associated to the critical component $\{w\}$ in $K_\beta$. Now $\Sigma K_\beta=\{u_2, w\}$, and the vertex $u_1$ is a source in $E\backslash \Sigma K_\beta$. Thus both $\psi_{\{w\}}$ and $\phi_{\ln 3,u_1}$ factor through KMS$_{\ln 3}$ states of $(C^*(E),\alpha)$.

\item For $\ln 2<\beta<\ln 3$, we have $H_\beta=\{w\}$, and Theorem~\ref{thm:altogether}\eqref{it:nocrit} gives us a $2$-dimensional simplex of KMS$_\beta$ states on $(\T C^*(E),\alpha)$ with extreme points $\phi_{\beta,v}$, $\phi_{\beta,u_1}$ and $\phi_{\beta,u_2}$. Since $\Sigma H_\beta=\{u_2,w\}$, only the state $\phi_{\beta, u_1}$ factors through $C^*(E)$.

\item For $\beta=\ln 2$, we have $H_\beta=\{w\}$ and $K_\beta=E^0$. The only critical component in $E\backslash H_\beta$ is $\{v\}$, and hence Theorem~\ref{thm:altogether}\eqref{it:critstates} implies that $(\T C^*(E),\alpha)$ has a unique KMS$_{\ln 2}$ state $\psi_{\{v\}}$, and that this state factors through $C^*(E)$. 

\item For $\beta<\ln 2$, the hereditary closure of $H_\beta=\{v,w\}$ is all of $E^0$, and $(\T C^*(E),\alpha)$ has no KMS$_\beta$ states.
\end{itemize}
\end{ex}

\begin{ex}\label{ex4}
Our next graph $E$ is the one from Example~\ref{EminusGsourced} with the edge between $u_1$ and $v$ reversed.
\[
\begin{tikzpicture}
\node[inner sep=1pt] (u_1) at (-2,0) {$u_1$};
\node[inner sep=1pt] (u_2) at (2,0) {$u_2$};
    \node[inner sep=1pt] (v) at (0,0) {$v$};
    \node[inner sep=1pt] (w) at (4,0) {$w$};
    \draw[-latex] (w)--(u_2);
     \draw[-latex] (u_2)--(v);
     \draw[-latex] (v)--(u_1);
    \foreach \x in {0,2} {\draw[-latex] (v) .. controls +(1.\x,1.\x) and +(-1.\x,1.\x) .. (v);}
    \foreach \x in {0,2,4} {
        \draw[-latex] (w) .. controls +(1.\x,1.\x) and +(-1.\x,1.\x) .. (w);
    }
\end{tikzpicture}
\]
\begin{itemize}
\item For $\beta> \ln 3$ and $\beta=\ln 3$, we still have a $3$-dimensional simplex of KMS$_{\beta}$ states on $(\T C^*(E),\alpha)$. However, for this graph $u_1$ is not a source in $E\backslash \Sigma K_{\ln 3}=E^0\backslash \{u_2,w\}$, and only the KMS$_{\ln 3}$ state $\psi_{\{w\}}$ factors through $C^*(E)$.

\item For $\ln 2<\beta<\ln 3$, we still have $H_\beta=\{w\}$ and a 2-dimensional simplex of KMS$_\beta$ states. For this graph, none of these KMS states factors through $C^*(E)$.

\item At $\beta=\ln 2$, $K_{\beta}=\{v,u_2,w\}$, and we have a $1$-dimensional simplex of KMS$_{\ln 2}$ states on $(\T C^*(E),\alpha)$ with extreme points $\psi_{\{u_2,w\}}$ and $\phi_{\ln 2,u_1}$. The state $\psi_{\{u_2,w\}}$ factors through $C^*(E)$.

\item For $\beta<\ln 2$, we have $H_\beta=\{v, u_2,w\}$, and  a single KMS$_\beta$ state $\phi_{\beta,u_1}$ on $(\T C^*(E),\alpha)$. Since $\Sigma H_\beta$ is all of $E^0$, this state does not factor through $C^*(E)$.
\end{itemize}
\end{ex}

\begin{ex}\label{ex:persistentsource}
We now add a source $u_3$ to the graph of Example~\ref{ex4}.
\[
\begin{tikzpicture}
    \node[inner sep=1pt] (u_3) at (-4,0) {$u_3$};
    \node[inner sep=1pt] (u_1) at (-2,0) {$u_1$};
    \node[inner sep=1pt] (u_2) at (2,0) {$u_2$};
    \node[inner sep=1pt] (v) at (0,0) {$v$};
    \node[inner sep=1pt] (w) at (4,0) {$w$};
    \draw[-latex] (u_3)--(u_1);
    \draw[-latex] (w)--(u_2);
    \draw[-latex] (u_2)--(v);
    \draw[-latex] (v)--(u_1);
    \foreach \x in {0,2} {\draw[-latex] (v) .. controls +(1.\x,1.\x) and +(-1.\x,1.\x) .. (v);}
    \foreach \x in {0,2,4} {
        \draw[-latex] (w) .. controls +(1.\x,1.\x) and +(-1.\x,1.\x) .. (w);
    }
\end{tikzpicture}
\]
The vertex $u_3$ belongs to the complement of $H_\beta$ and of $K_\beta$ for all $\beta$.
So at every $\beta$, the new vertex $u_3$ gives an extreme point
$\phi_{\beta, u_3}$ of the KMS$_\beta$ simplex of $\T C^*(E)$, and this state factors through $C^*(E)$.
\end{ex}

\begin{ex}
The following graph $E$
\[
\begin{tikzpicture}
    \node[inner sep=1pt] (u) at (0,0) {$u$};
    \node[inner sep=1pt] (v) at (2,1) {$v$};
    \node[inner sep=1pt] (w) at (2,-1) {$w$};
    \node[inner sep=1pt] (x) at (4,0) {$x$};
    \draw[-latex] (x)--(v);
    \draw[-latex] (x)--(w);
    \draw[-latex] (w)--(u);
    \draw[-latex] (v)--(u);
    \foreach \x in {0,2} {
        \draw[-latex] (x) .. controls +(1.\x,1.\x) and +(-1.\x,1.\x) .. (x);
    }
    \foreach \x in {0,2} {
        \draw[-latex] (v) .. controls +(1.\x,1.\x) and +(-1.\x,1.\x) .. (v);
    }
    \foreach \x in {0,2} {
        \draw[-latex] (w) .. controls +(1.\x,1.\x) and +(-1.\x,1.\x) .. (w);
    }
    \foreach \x in {0} {
        \draw[-latex] (u) .. controls +(1.\x,1.\x) and +(-1.\x,1.\x) .. (u);
    }
\end{tikzpicture}
\]
has three components $C$ with $\rho(A)=\rho(A_C)$, but only $\{v\}$ and $\{w\}$ are minimal. 
\begin{itemize}
\item For $\beta>\ln 2$, we have a $3$-dimensional simplex of KMS$_\beta$ states on $(\T C^*(E),\alpha)$, and none of them factor through $C^*(E)$. 

\item At $\beta=\ln 2$, we have a $2$-dimensional simplex of KMS$_{\ln 2}$ states on $(\T C^*(E),\alpha)$ with extreme points $\psi_{\{v\}}$, $\psi_{\{w\}}$ and $\phi_{\beta, u}$. Of these, only $\psi_{\{v\}}$ and $\psi_{\{w\}}$ factor through $C^*(E)$. 

\item For $0\leq \beta<\ln 2$, there is a unique KMS$_\beta$ state on $\T C^*(E)$, which only factors through $C^*(E)$ when $\beta=0$. This KMS$_0$ state is the invariant trace on $C^*(E)$ that is obtained by lifting the trace on $C^*(E\backslash \Sigma H)\cong C(\TT)$ given by integration against Haar measure on~$\TT$.
\end{itemize}
\end{ex}

\section{Concluding Remarks}\label{CR}

\subsection{Critical inverse temperatures}\label{CIT}
We say that $\beta$ is a \emph{critical inverse temperature} if $H_\beta\not= E^0$ and $\beta=\ln\rho(A_{E^0\backslash H_\beta})$. Theorem~\ref{thm:altogether}\eqref{it:critstates} says that these are precisely the inverse temperatures at which we have states of the form $\psi_C$, and that these states factor through $C^*(E)$; for all but the smallest critical $\beta$, we also have states of the form $\phi_{\beta,v}$. 

Every critical inverse temperature $\beta$ has the form $\ln\rho(A_C)$ for some component $C$, but as our examples show, not every $\ln\rho(A_C)$ need be critical. (For example, $\beta=\ln 2$ in Example~\ref{dumbbell1}.) So to find the critical $\beta$ for a given finite graph $E$, we compute the numbers $\beta=\ln\rho(A_C)$,  identify the sets $H_\beta$ by looking at the graph, and discard the numbers which are not critical. The set of critical inverse temperatures is always finite (with cardinality bounded by $|E^0|$), but could in general be arbitarily large.

Since there are finitely many critical values, we can list them in increasing order. Then for $\beta$ between two consecutive critical values, say $\beta\in(\beta_C,\beta_D)$, Theorem~\ref{thm:altogether}\eqref{it:nocrit} gives a simplex of KMS$_\beta$ states with extreme points $\{\phi_{\beta,v}:v\in E^0\backslash H_\beta\}$.

 For the Toeplitz algebra, the range of possible inverse temperatures $\beta$ is either $\RR$ (if $E$ has a source which does not talk to any nontrivial component $C$) or $[\beta_l,\infty)$, where $\beta_l$ is the smallest critical inverse temperature. But for the graph algebra $C^*(E)$ there are interesting number-theoretic restrictions on the possible values of critical $\beta$ and thus on the range of possible inverse temperatures. We use results of Lind \cite{Lin}, and refer to the treatment in \cite[\S11.1]{Lin-Mar}.

Suppose that $E$ is a directed graph without sources and suppose that $C^*(E)$ has a KMS$_\beta$ state. Then $\beta=\ln\rho(A_C)$ for some strongly connected component $C$ of $E$. Since $\rho(A_C)$ is the Perron-Frobenius eigenvalue of $A_C$, it is a root of the characteristic polynomial $\det(x1-A_C)$, which is a monic polynomial of degree $n$ with integer coefficients. Thus $\rho(A_C)$ is an algebraic integer. For each algebraic integer $\lambda$ there is a unique \emph{minimal polynomial} $q_\lambda(x)\in \QQ[x]$ that is monic, irreducible and has $q_\lambda(\lambda)=0$ \cite[Proposition~6.1.7]{IR}; the other roots of this polynomial are called the \emph{conjugates} of $\lambda$. A \emph{Perron number} is an algebraic integer $\lambda \geq 1$ that is strictly larger than the absolute value of all its other conjugates.

 \begin{prop}\label{excPerron} 
Suppose that $\beta> 0$. Then $e^{p\beta}$ is a Perron number for some $p\in \NN$ if and only if there exists a graph $E$ without sources such that the gauge dynamics on $C^*(E)$ has a KMS$_\beta$ state.
 
 \end{prop}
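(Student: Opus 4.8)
The plan is to deduce both implications from Lind's theorem, which characterises the Perron numbers as exactly the spectral radii of primitive nonnegative integer matrices \cite{Lin} (see also \cite[\S11.1]{Lin-Mar}), together with two passages between $e^\beta$ and $e^{p\beta}$ carried out at the level of matrices. Throughout I use the observation recorded just before the statement: if $E$ has no sources and $C^*(E)$ carries a KMS$_\beta$ state, then $\beta=\ln\rho(A_C)$ for some strongly connected component $C$, so that $e^\beta=\rho(A_C)$ with $A_C$ irreducible over $\NN$. Recall also that in our conventions a vertex $v$ is a source exactly when the row of $A$ indexed by $v$ vanishes, since that row sums to $|r^{-1}(v)|$.

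For the implication that a sourceless $E$ with a KMS$_\beta$ state forces $e^{p\beta}$ to be a Perron number, I would start from $e^\beta=\rho(A_C)$ and let $p$ be the period of the irreducible matrix $A_C$ in the sense of Perron--Frobenius theory. The cyclic decomposition then writes $A_C^{\,p}$ as a block-diagonal matrix whose diagonal blocks are primitive nonnegative integer matrices, each with spectral radius $\rho(A_C)^{p}=e^{p\beta}$. Fixing one such primitive block $B$, the Perron--Frobenius theorem shows that $\rho(B)=e^{p\beta}$ is a simple eigenvalue strictly dominating every other eigenvalue in modulus; since the conjugates of $\rho(B)$ are roots of the monic integer characteristic polynomial of $B$, hence eigenvalues of $B$, they are all strictly smaller in absolute value, and $e^{p\beta}\geq 1$ is a Perron number. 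This is the elementary half of Lind's characterisation and needs no deep input.

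For the converse, suppose $e^{p\beta}$ is a Perron number. Here I invoke the substantive direction of Lind's theorem to obtain a primitive $n\times n$ matrix $B$ over $\NN$ with $\rho(B)=e^{p\beta}$, and then take its $p$-th root at the matrix level by forming the $pn\times pn$ block matrix
\[
A=\begin{pmatrix} 0 & I & 0 & \cdots & 0\\ 0 & 0 & I & \cdots & 0\\ \vdots & & & \ddots & \vdots\\ 0 & 0 & 0 & \cdots & I\\ B & 0 & 0 & \cdots & 0\end{pmatrix}.
\]
Writing a putative eigenvector in block form as $v=(v_0,\dots,v_{p-1})$, the relation $Av=\lambda v$ gives $v_{i+1}=\lambda v_i$ and hence $Bv_0=\lambda^{p}v_0$, so the eigenvalues of $A$ are precisely the $p$-th roots of those of $B$ and $\rho(A)=\rho(B)^{1/p}=e^{\beta}$. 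Primitivity of $B$ makes $A$ irreducible, so the graph $E$ with vertex matrix $A$ is strongly connected; and since the identity blocks and the irreducible block $B$ have no zero rows, neither does $A$, so $E$ has no sources. Because $\beta>0$ we have $\rho(A)>1$, so $E$ has a cycle and its unique minimal critical component is $E^0$; thus Theorem~\ref{KMScrit}\eqref{crita} supplies the state $\psi_{E^0}$, which is a KMS$_{\ln\rho(A)}=$~KMS$_\beta$ state factoring through $C^*(E)$. This produces the required graph.

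The main obstacle is the nonelementary direction of Lind's theorem used in the converse, namely that every Perron number is realised as the spectral radius of a primitive integer matrix; this is the one genuinely deep ingredient, and I would simply quote it. The remaining work is routine verification: confirming that the block matrix $A$ is irreducible with no zero rows and has spectral radius $\rho(B)^{1/p}$, and, in the first implication, correctly extracting a primitive diagonal block of spectral radius $\rho(A_C)^{p}$ from the period-$p$ cyclic decomposition of $A_C$.
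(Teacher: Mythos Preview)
Your proof is correct and follows essentially the same strategy as the paper: both directions rest on Lind's characterisation of Perron numbers. The paper simply invokes both implications of \cite[Theorem~11.1.5]{Lin-Mar} as a black box---$(1)\Rightarrow(3)$ for the forward direction and $(3)\Rightarrow(1)$ for the converse---whereas you unpack the period-$p$ bookkeeping by hand (the cyclic decomposition of $A_C^{\,p}$ going forward, and the block-companion $p$-th-root matrix coming back), citing Lind only for the hard core that every Perron number is the spectral radius of a primitive integer matrix. Your version has the minor advantage of verifying the ``no sources'' requirement explicitly; the paper leaves this implicit in the irreducibility of the matrix produced by \cite[Theorem~11.1.5]{Lin-Mar}.
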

\begin{proof} Let $E$ be a graph such that $C^*(E)$ has a KMS$_\beta$ state, and choose a component $C$ such that $\beta=\ln\rho(A_C)$, as above. Let $p$ be the period of the irreducible matrix $A_C$, then $(e^{\beta})^p=e^{p\beta}$ is a Perron number by the implication 
$(1)\Longrightarrow(3)$ of \cite[Theorem 11.1.5]{Lin-Mar}.  Conversely, if $e^{p\beta}$ is a Perron number for some $p\in \NN$, the implication $(3)\Longrightarrow (1)$ of the same theorem gives the existence of a nonnegative integer matrix $A$ with spectral radius~$e^\beta$. Thus for the graph $E$ with vertex matrix $A$, $C^*(E)$ has a KMS$_\beta$ state.
\end{proof}

It is easy to produce Perron numbers and also algebraic integers $\lambda \geq 1$ that are not Perron numbers. For example, $(5-\sqrt 5)/2$ is an algebraic integer with minimal polynomial $x^2-5x+5$, and hence the conjugates are $(5\pm \sqrt 5)/2$. Thus Proposition~\ref{excPerron} implies that there is no graph without sources such that $C^*(E)$ has a KMS state with inverse temperature $\ln((5-\sqrt 5)/2)$. Note that $x^2-5x+5$ is the characteristic polynomial of 
\[
A=\begin{pmatrix}3&1\\1&2
\end{pmatrix},
\]
which is the vertex matrix of a graph with two vertices.

\subsection{Connections with the results of Carlsen and Larsen}
In their recent preprint \cite{CL}, Carlsen and Larsen  study the KMS states of generalized gauge dynamics on the relative graph algebras of possibly infinite graphs using the partial action techniques developed by Exel and Laca in \cite{EL}. Their results apply in particular to finite graphs\footnote{Though in \cite{CL} they use the non-functorial convention for paths in directed graphs, so strictly speaking one would have to apply their results to the opposite graph $E^{\text{opp}}=(E^0,E^1,s,r)$.}, where taking their function $N:E^1\to \RR$ to be identically $1$ gives the action $\alpha:\RR\to \Aut \T C^*(E)$ studied here. 

To make the connection, we observe that the sum  $y_v := \sum_{\mu\in E^*v} e^{-\beta |\mu|}$ in \cite[Theorem~3.1]{aHLRS1} is the same as that defining the ``fixed-target partition function'' $Z_v(\beta)$ in \cite[Equation (5.8)]{CL} (see also \cite[Definition 9.3]{EL}). Our results allow us to identify the intervals of convergence of these partition functions:

\begin{lem}
 Let $E$ be a finite graph. For $v\in E^0$, take $\beta_v
=\max\{\ln\rho(A_C):C\leq v\}$ as in Corollary~\ref{betav}. Then the series 
$\sum_{\mu\in E^*v} e^{-\beta |\mu|}$ converges if and only if $\beta>\beta_v$. 
\end{lem}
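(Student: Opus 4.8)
The plan is to recognise the series $\sum_{\mu\in E^*v}e^{-\beta|\mu|}$ as precisely the partition function $y_v$ that already drives this section, and then to extract both directions from results we have in hand. The key preliminary observation is that for any hereditary set $H$ with $v\notin H$ we have $(E\backslash H)^*v=E^*v$: a path with source $v$ cannot meet $H$, for $v$ reaches every vertex along it, and if one of them lay in $H$ then hereditariness would force $v\in H$. Consequently $\sum_{\mu\in E^*v}e^{-\beta|\mu|}$ is exactly the sum $y_v$ of Proposition~\ref{defphiv} computed in the graph $E\backslash H$.

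For the ``if'' direction I would assume $\beta>\beta_v$ and invoke Corollary~\ref{betav}: the hereditary closure $K_\beta$ of $\{C:\ln\rho(A_C)\geq\beta\}$ omits $v$, and since every strongly connected component contained in $E^0\backslash K_\beta$ has $\ln\rho(A_C)<\beta$, the block-triangular form gives $\ln\rho(A_{E^0\backslash K_\beta})<\beta$. Thus Proposition~\ref{defphiv} applies to $E\backslash K_\beta$ at the vertex $v\in E^0\backslash K_\beta$, yielding convergence of $\sum_{\mu\in(E\backslash K_\beta)^*v}e^{-\beta|\mu|}$, which by the opening remark is our series.

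For the ``only if'' direction I would argue the contrapositive by a direct path count. If $\beta\leq\beta_v$, then since $\beta_v$ is a finite maximum over a finite set of components, there is a (necessarily nontrivial) component $C$ with $C\leq v$ and $\ln\rho(A_C)\geq\beta$, i.e.\ $\rho(A_C)\geq e^\beta$; unwinding the order, $C\leq v$ means there is a path $\eta$ from $v$ into $C$, ending at some $c\in C$. For each loop $\sigma$ at $c$ lying entirely in $C$, the concatenation $\sigma\eta$ has source $v$, and $\sigma\mapsto\sigma\eta$ is injective, so
\[
\sum_{\mu\in E^*v}e^{-\beta|\mu|}\ \geq\ e^{-\beta|\eta|}\sum_{n=0}^\infty A_C^n(c,c)\,e^{-\beta n}.
\]
Because $A_C$ is irreducible with $\rho(A_C)\geq e^\beta$, the series on the right diverges, and hence so does the one on the left.

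The one delicate point, which I expect to be the main obstacle, is the divergence of $\sum_n A_C^n(c,c)e^{-\beta n}$ in the boundary case $\rho(A_C)=e^\beta$, where the root test is inconclusive. I would settle it with the standard Perron--Frobenius asymptotics for an irreducible matrix of period $p$: the quantities $A_C^{kp}(c,c)\rho(A_C)^{-kp}$ converge to a strictly positive constant as $k\to\infty$, so the terms $A_C^{kp}(c,c)e^{-\beta kp}$ do not tend to $0$ and the series diverges; when $\rho(A_C)>e^\beta$ the root test already gives divergence. Everything else is routine bookkeeping with the hereditary-closure conventions.
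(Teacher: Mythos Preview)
Your argument is correct and follows essentially the same route as the paper's proof: for convergence you feed an appropriate hereditary set (you use $K_\beta$, the paper uses $H_{\beta_v}$) into Proposition~\ref{defphiv}, and for divergence you choose a path from $v$ into a component $C$ with $\ln\rho(A_C)\ge\beta$ and bound below by a sum over paths in $C$. The only cosmetic differences are that the paper bounds below by $\sum_{\mu'\in CE^*r(\lambda)}e^{-\beta|\mu'|}$ rather than by loops at a single vertex, and simply asserts divergence from $\rho(e^{-\beta}A_C)\ge 1$ and irreducibility, whereas you spell out the Perron--Frobenius limit in the boundary case.
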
 

\begin{proof}
If $\beta>\beta_v$, then taking $H=H_{\beta_v}$ in Proposition~\ref{defphiv} shows that the series converges. On the other hand, suppose that $\beta \leq \beta_v$ and $C$ is a component such that $\beta \leq \beta_C$.
Choose a path  $\lambda$ in $CE^*v$. Then 
\[
\sum_{\mu\in E^*v} e^{-\beta |\mu|}  \geq e^{-\beta|\lambda|} \sum_{\mu'\in CE^*r(\lambda)} e^{-\beta |\mu'|},
\]
and the series $\sum_{\mu'\in CE^*r(\lambda)} e^{-\beta |\mu'|}$ diverges because $\rho(e^{-\beta}A_C )\geq 1$ and $A_C$ is irreducible. Thus  $\sum_{\mu\in E^*v} e^{-\beta|\mu|}$ diverges too. 
\end{proof}

The factors $\delta_{\mu,\nu}$ in our formulas for the values of KMS states show that all the KMS states on $\T C^*(E)$ and $C^*(E)$ factor through the expectation onto the diagonal $D:=\clsp\{s_\lambda s_\lambda^*:\lambda\in E^*\}$. The restriction to $D$ is then given by a measure $\nu$ on the spectrum of $D$, which is $E^*\cup E^\infty$. Exel and Laca say that a KMS state $\psi$ is of \emph{finite type} if this measure $\nu$ is supported on the set $E^*$ of finite paths, and of \emph{infinite type} if $\nu$ is supported on the set $E^\infty$ of infinite paths \cite{EL}. (These are described as \emph{infinite type (A)} in \cite{CL}; for finite $E$, $E^\infty$ has no wandering infinite paths, and hence there are no states which are of their infinite type (B).) 

The states $\phi_{\beta,v}$ have the form $\phi_\epsilon\circ q_H$ with $\epsilon$ a point mass supported at $v$. In \cite[\S6.4]{aHLRS1} we described measures on $E^*$ for the states of the form $\phi_\epsilon$, so they and the $\phi_{\beta,v}$ are of finite type. The states $\psi_C$ factor through $C^*(E)$, and  are of infinite type; to see this\footnote{To see what goes wrong with this argument for states $\phi_{\beta,v}$ which factor through $C^*(E)$, notice that then Theorem~\ref{thm:altogether} implies that $v$ is a source in some $E\backslash \Sigma H_\beta$ or $E\backslash \Sigma K_\beta$, and hence $v$ is also a source in $E$. But then no Cuntz-Krieger relation is imposed at $v$ in $C^*(E)$, so \eqref{compmeas} is not valid.}, we use that $\psi_C$ factors through $C^*(E)$ to compute
\begin{equation}\label{compmeas}
\nu(\{\lambda\})=\nu(Z(\lambda))-\sum_{r(e)=s(\lambda)}\nu(Z(\lambda e))=\psi_C(s_\lambda s^*_\lambda)-\sum_{r(e)=s(\lambda)}\psi_C(s_{\lambda e}s^*_{\lambda e})=0.
\end{equation}
Thus for $\beta$ between two critical values, say $\beta\in (\beta_C,\beta_D)$, the set $\ebetareg$ in \cite[Definition~5.5]{CL} is $E\backslash H_{\beta_D}$ and for $\beta$ critical it is $E\backslash K_\beta$. The set $\ebetacri$ is empty unless $\beta$ is critical, and it is the union of the criticial components in $E\backslash H_\beta$ if $\beta$ is critical. If $\beta_C$ is critical and there are sources in $E\backslash \Sigma K_{\beta_C}$, then $(C^*(E),\alpha)$ has KMS$_{\beta_C}$ states of both finite and infinite type.

\end{document}